\documentclass[11pt]{amsart}        
\pagestyle{plain}

\usepackage{Mytheorems}
\usepackage{amssymb,amscd,amsthm,amsxtra}
\newtheorem{thm}{Theorem}[section]
\newtheorem{lem}[thm]{Lemma}
\newtheorem{rmk}[thm]{Remark}
\newtheorem{defi}[thm]{Definition}
\newtheorem{prop}[thm]{Proposition}
\usepackage{psfrag}
\usepackage{mathrsfs,amscd}
\newcommand{\parallelsum}{\mathbin{\!/\mkern-5mu/\!}}

\usepackage{graphicx}
\usepackage{epstopdf}

\usepackage{latexsym}
\input epsf

\usepackage{color}

\begin{document}
\title{ Regularity for diffuse reflection boundary problem to the stationary linearized Boltzmann equation in a convex domain  }
\author[IC]{I-Kun Chen}
\author{Chun-Hsiung Hsia}
\author{Daisuke Kawagoe}
\address{ }
\email{}
\date{\today}

\begin{abstract}
We investigate the regularity issue for the diffuse reflection boundary problem to the stationary linearized Boltzmann equation for hard sphere potential, cutoff hard potential, or cutoff Maxwellian molecular gases in a strictly convex bounded domain. We obtain  pointwise estimates for first derivatives of  the solution provided the boundary temperature is bounded differentiable and the solution is bounded. This result can be understood as a stationary version of the velocity averaging lemma and mixture lemma. 

\end{abstract}
\maketitle
\section{introduction}
In this article, we consider the stationary linearized  Boltzmann equation
\begin{equation}\label{SBE}
\zeta \cdot \nabla f(x,\zeta)=L(f),
\end{equation}
for $\zeta \in \mathbb{R}^3$ and $x\in\Omega$, where $\Omega \subset \mathbb{R}^3$ is a  $C^2$  bounded strictly convex domain such that $\partial \Omega$ is of positive Gaussian curvatures. Here, $L$  represents the linearization of the collision operator. The collision operator in Boltzmann equation reads:
\begin{align}\label{QFG} Q(F,G)&=\int_{\mathbb{R}^3} \int_0^{2\pi}\int_0^{\frac{\pi}2}(F(\zeta')G(\zeta_*')-F(\zeta)G(\zeta_*)B(|\zeta_*-\zeta|,\theta) d\theta d \epsilon d\xi_*,\end{align} where $\zeta$, $\zeta_*$ and $\zeta'$, $\zeta_*'$ are pairs of velocities before and after the impact, and $B$ is called the cross section, depending on interaction between particles.   $L$ is obtained by linearizing $Q$ around the standard Maxwellian
\begin{equation}M(\zeta)=\pi^{-\frac32}e^{-|\zeta|^2}\end{equation} in the fashion
\begin{equation}
F=M+M^{\frac12}f.
\end{equation}
$L$ reads
\begin{equation}
\label{Ldefine}
L(f)=M^{-\frac12}(\zeta)[Q(M^\frac12f,M)+Q(M,M^\frac12f)].
\end{equation}
The widely used angular cutoff potential is a mathematical model introduced by Grad \cite{Grad} by assuming
\begin{equation}
0\leq B \leq C|\zeta-\zeta_*|^\gamma\cos\theta\sin\theta.
\end{equation}
In this article,  we follow Grad's idea and  assume
\begin{equation}\label{Bsaperate}\begin{split}
&B=|\zeta-\zeta_*|^\gamma\beta(\theta),\\&
0\leq\beta(\theta)\leq C\cos\theta\sin\theta,\\&
0\leq\gamma\leq1.
\end{split}
\end{equation}
The range of $\gamma$ we consider corresponds  to the hard sphere model, cutoff hard potential, and cutoff Maxwellian molecular gases. We shall discuss the properties of $L$ under our assumption \eqref{Bsaperate} in detail in Section \ref{collisionoperator}.

The boundary condition under the consideration is
the diffuse reflection boundary condition :
\begin{enumerate}
\item[(1)] First,  there is no net flux on the boundary.
\item[(2)]Secondly, the velocity distribution function reflected from the boundary is in thermal equilibrium with the boundary temperature.
 \end{enumerate} In the context of the linearized Boltzmann equation, the mathematical formula of the aforementioned diffuse reflection boundary condition could be described as: for   $x\in\partial \Omega$ and $ \zeta\cdot n <0,$
\begin{align}
 \label{fbdry}
 f(x,\zeta)&=\psi(x)M^{\frac12}+T(x)(|\zeta|^2-2)M^{\frac12},\\
 \label{defpsi}
 \psi(x)&=2\sqrt{\pi}\int_{\zeta\cdot n > 0} f(x,\zeta)|\zeta\cdot n|M^\frac12d\zeta,
\end{align}
where $n=n(x)$ is the outward unit normal of $\partial \Omega$ at $x\in \partial \Omega$. Here, $T(x)$ is the temperature on the boundary.
 To state  our main goal of mathematical analysis, we define, for  given $x \in \bar{\Omega}$,  \begin{align}
\label{deftauminus}\tau_-(x,\zeta)&=\inf \Big\{t \, \Big|\, t> 0,\ x-t\zeta  \notin\Omega \Big\},  \text{ and }\\
\label{defpp}p(x,\zeta)&=x-\tau_-(x,\zeta)\zeta.
\end{align}
Under the assumption \eqref{Bsaperate}, $L$ can be decomposed into a multiplicative operator and an integral operator:
\begin{equation}
L(f)= - \nu(|\zeta|)f+K(f).\end{equation}
We take the integral operator $K$ as the source term and rewrite \eqref{SBE} as
\begin{equation}
\label{SBE2}
\zeta \cdot \nabla f(x,\zeta)+\nu(|\zeta|)f(x,\zeta)=K(f).
\end{equation}
The corresponding integral form of the solution to \eqref{SBE2} is  \begin{equation}\label{inteq}\begin{split}
f(x,\zeta)&=f(p(x,\zeta),\zeta)e^{-\nu(|\zeta|) \tau_-(x,\zeta)}+\int_0^{\tau_-(x,\zeta)}e^{-\nu(|\zeta|) s}K(f)(x-s\zeta, \zeta)ds.
\end{split}\end{equation}
We say $f$ is a solution to \eqref{SBE} if $f$ satisfies \eqref{inteq} almost everywhere. The existence of a solution to the presented  problem has been established by Guiraud in 1970 \cite{Guiraudlinear}. Recently, Esposito, Guo, Kim, and Marra extended the result to non-convex domains in  \cite{GuoKim}. In \cite{GuoKim}, under the assumption that $T(x)$ is bounded, the authors proved that the solutions to \eqref{SBE2} supplemented with the boundary conditions \eqref{fbdry}-\eqref{defpsi} is of function class $L^\infty_{x,\zeta}$. In the present article, we shall assume that $T(x)$ is  bounded differentiable, i.e., $T(x)$ is differentiable and its first derivatives are bounded, and we shall  aim at proving the interior differentiability of the solution to the problem \eqref{SBE2} supplemented with \eqref{fbdry}-\eqref{defpsi}, see Theorem~\ref{MainThm}.

It is worth mentioning that, in \cite{GuoKim}, they also proved that the solution is  continuous away from the grazing set.  For the higher regularity issue, by observing velocity averaging effect for the stationary linearized Boltzmann equation, the H\"older continuity up to $\frac12-$ away from the boundary was first established in \cite{RegularChen} for inflow boundary value problems.  In this article, we establish a pointwise estimate of the first derivatives of the solution. The main cruxes of this pointwise estimate are multifold. First, we need to overcome the difficulty brought by the diffuse reflection boundary condition. The diffuse reflection condition (\eqref{fbdry} and \eqref{defpsi}) involves the solution itself. Namely, inferring from \eqref{inteq} and \eqref{fbdry}-\eqref{defpsi}, since we only know that the solution $f \in L^\infty_{x,\zeta}$, we cannot even do the formal derivative to $f$ with respect to the space variable $x$. Secondly, we need to improve the regularity from H\"older continuity to differentiability.  We shall discuss these issues in depth after introducing the main theorem. Regarding regularity issues for the time  dependent Boltzmann equation, we refer the interested readers to \cite{GKTT, GKTT2, Kimdis}.

The main result of this article is as follows.
\begin{thm}\label{MainThm}
Let $\Omega \subset \mathbb{R}^3$ be a $C^2$ bounded strictly convex domain such that $\partial \Omega$ is of positive Gaussian curvatures. Under the assumption \eqref{Bsaperate}, suppose $f\in L^\infty_{x,\zeta}$ is a  solution to the stationary linearized Boltzmann equation \eqref{SBE} with the diffuse reflection boundary condition \eqref{fbdry}-\eqref{defpsi} such that
  the boundary temperature $T(x)$ $($in \eqref{defpsi}$)$ is bounded differentiable, i.e.,  $T(x)$ is differentiable and its first derivatives are bounded. Then, for $\epsilon>0$, we have
\begin{equation}
\sum_{i=1}^3 \left|\frac{\partial}{\partial x_i }f(x,\zeta) \right|+\sum_{i=1}^3 \left| \frac{\partial}{\partial \zeta_i} f(x,\zeta) \right|\leq C(1+d_x^{-1})^{\frac43+\epsilon}.\end{equation}
\end{thm}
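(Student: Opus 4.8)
The plan is to bootstrap regularity out of the integral representation \eqref{inteq} combined with the diffuse reflection boundary condition \eqref{fbdry}--\eqref{defpsi}, iterating the Duhamel formula several times so that the velocity-averaging smoothing inherent in $K$ can be exploited. The starting point is that $f\in L^\infty_{x,\zeta}$ and, by the continuity result of \cite{GuoKim}, $f$ is continuous away from the grazing set. Substituting \eqref{inteq} into itself once, and then using \eqref{fbdry}--\eqref{defpsi} to express the boundary value $f(p(x,\zeta),\zeta)$ for $\zeta\cdot n<0$ in terms of an integral of $f$ over the outgoing hemisphere, one obtains a representation of $f(x,\zeta)$ in which the ``dangerous'' term involving $f$ on the boundary is smoothed: the outgoing integral in \eqref{defpsi} is an integral over a full hemisphere of velocities, so it already enjoys a velocity-averaging gain, and the $K$-terms bring additional $\zeta_*$-integrations. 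The temperature contribution $T(x)(|\zeta|^2-2)M^{1/2}$ is handled separately, using only that $T$ is bounded differentiable; its contribution to $\partial_x f$ is controlled once we understand the $x$-dependence of $p(x,\zeta)$ and $\tau_-(x,\zeta)$.

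The next step is to differentiate the iterated representation in $x_i$ and $\zeta_i$. Here the geometry of the strictly convex domain with positive Gaussian curvature enters crucially: the maps $x\mapsto\tau_-(x,\zeta)$ and $x\mapsto p(x,\zeta)$ are smooth off the grazing set, and their derivatives blow up like $|\zeta\cdot n(p(x,\zeta))|^{-1}$, which in turn is comparable to a negative power of $d_x=\operatorname{dist}(x,\partial\Omega)$ along the relevant rays. The differentiation produces three kinds of terms: (i) terms where the derivative hits the exponential factor $e^{-\nu|\zeta|\tau_-}$ or the limits of integration — these are the source of the singularity and must be estimated by trading the blow-up of $\partial_x\tau_-$ against an integration over $s\in(0,\tau_-)$ or over the hemisphere in \eqref{defpsi}; (ii) terms where the derivative lands inside $K(f)(x-s\zeta,\zeta)$ and is absorbed by a change of variables in the $\zeta_*$-integral defining $K$ (this is the ``mixture lemma'' mechanism, converting a spatial derivative into a velocity derivative that the kernel of $K$ can absorb); and (iii) genuinely lower-order terms. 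One then estimates the resulting integrals using the pointwise bounds on the kernel of $K$ coming from Section~\ref{collisionoperator}, together with the change-of-variables estimates for $(s,\zeta)\mapsto x-s\zeta$ that underlie the stationary velocity-averaging lemma, as in \cite{RegularChen}.

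Carefully bookkeeping the powers, each application of the Duhamel--boundary iteration improves the exponent of $d_x^{-1}$; after finitely many iterations one reaches the stated exponent $\frac43+\epsilon$, with the $\epsilon$ coming from the usual loss in interpolating the borderline singular integrals (a log-type endpoint is avoided by conceding an arbitrarily small power). The main obstacle, and the technical heart of the argument, is step (i)--(ii) near the grazing set: one must show that the singularity of $\partial_x\tau_-$ is integrable against the $K$-kernel and the hemisphere measure in \eqref{defpsi}, uniformly enough that the bound degrades only like $d_x^{-4/3-\epsilon}$ rather than a worse power. This requires the positive-curvature hypothesis (to control $|\zeta\cdot n(p(x,\zeta))|$ from below in an averaged sense) and a delicate splitting of the velocity integration into a grazing part (small measure, crude bound) and a non-grazing part (good Jacobian, velocity-averaging gain), optimized in the cutoff parameter to produce the exponent $\frac43$.
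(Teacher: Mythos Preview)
Your proposal identifies the right ingredients at a coarse level---velocity averaging, the blow-up of $\partial_x\tau_-$ near grazing, and special treatment of the boundary term---but it misses two concrete mechanisms that the paper actually uses, and the description of how the exponent $\tfrac43$ arises is not correct.

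First, the exponent $\tfrac43$ does \emph{not} come from repeatedly iterating Duhamel and watching a sequence of exponents improve toward $\tfrac43+\epsilon$. The paper iterates the integral equation exactly once, writing $f=I+II+III$. The term $I$ differentiates to $O(d_x^{-1})$. For $II$ one studies $H(x,\zeta)=\int k(\zeta,\zeta')I(x,\zeta')\,d\zeta'$ and splits the $\zeta'$-integral at radius $d_x^{1/3}$ around $\zeta$: the near part uses $|\partial_x I|\lesssim d_x^{-1}$ and the small ball, the far part uses $|k|\lesssim d_x^{-1/3}$ and a change of variables $\zeta'=l(x-y)$ to a surface integral followed by Lemma~\ref{lemmaA}. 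This optimization produces $|\partial_x H|\lesssim d_x^{-1/3}|\ln d_x|$, and integrating this along the ray in $II^A$ yields the $d_x^{-4/3-\epsilon}$. So $\tfrac43=1+\tfrac13$ from a single, specific cutoff choice, not from a convergent iteration.

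Second, your step (ii) (``convert a spatial derivative into a velocity derivative that the kernel of $K$ can absorb'') stops short of the key device needed to pass from H\"older to genuine differentiability. After the change of variables that turns the inner velocity integral into a volume integral over $\Omega$, the formal $x$-derivative of $G$ (and of $D_f$) produces a $|x-y|^{-3}$ singularity that is \emph{not} integrable in $\mathbb{R}^3$. The paper's resolution is to add and subtract $f(x,\cdot)$ inside the integrand: the difference $f(y,\cdot)-f(x,\cdot)$ is controlled by the H\"older estimate already proved, while the added term has no $y$-dependence in $f$ and is handled as a principal-value integral via the divergence theorem (excise a ball $B(x,\epsilon)$, convert to a surface integral, and pass to the limit). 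Without this add--subtract/divergence step you will not get past the H\"older regularity of \cite{RegularChen}.

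Finally, your treatment of the boundary flux $\psi$ is too vague. The paper's point is that after the change of variables $\zeta=l(x-y)$ with $y\in\partial\Omega$, the term $B_\psi$ becomes an integral over $\partial\Omega$ in which $\psi$ appears only as $\psi(y)$ with $y$ the integration variable; all $x$-dependence sits in explicit geometric factors. Thus one can differentiate $B_\psi$ in $x$ without ever touching $\psi$, so no regularity of $\psi$ beyond boundedness is needed. This, together with Lemma~\ref{lemmaB} controlling $|n(x)\cdot(x-y)|$ and $|n(y)\cdot v|$ near $x$, is what makes the surface integral integrable after differentiation. Your ``velocity-averaging gain from the hemisphere integral'' does not by itself deliver this.
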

Now we briefly recall some ideas about the velocity averaging effect for the stationary linearized Boltzmann equation on bounded convex domains introduced in \cite{RegularChen}. We iterate the integral equation \eqref{inteq} again and obtain \begin{equation}\begin{split}\label{def123}
&f(x,\zeta)=f(p(x,\zeta),\zeta)e^{-\nu(|\zeta|) \tau_-(x,\zeta)}\\
&+\int_0^{\tau_-(x,\zeta)}\int_{\mathbb{R}^3}e^{-\nu(|\zeta|) s}k(\zeta,\zeta')e^{-\nu(|\zeta'|) \tau_-(x-s\zeta ,\zeta')}f\big(p(x-s\zeta ,\zeta'),\zeta'\big)d\zeta'ds\\
&+\int_0^{\tau_-(x,\zeta)}\int_{\mathbb{R}^3}\int_0^{\tau_-(x-s\zeta ,\zeta')}e^{-\nu(|\zeta|) s}k(\zeta,\zeta')e^{-\nu(|\zeta'|) t}K(f)(x-s\zeta -t\zeta',\zeta')dtd\zeta'ds\\&=:I+II+III.
\end{split}\end{equation}
For $I$ and $II$, the regularity of the boundary is preserved by the transport. 
The velocity averaging effect would play an important role in the improvement of regularity for $III$. 
Notice that, thanks to nice property of the integral kernel, \eqref{gradianK}, in fact $K(f)$ is bounded differentiable in $\zeta$ provide $f$ is bounded, i.e.,
\begin{equation}
\left\Vert \frac{\partial}{\partial \zeta_i}K(f) \right\Vert_{L^\infty_\zeta}\leq C\Vert f \Vert_{L^\infty_\zeta}.\end{equation}
For time dependent kinetic equations defined on the whole space, it is well-known that velocity averaging effect combining with transport effect can transfer regularity from velocity variables to space variables, e.g., famous Velocity Averaging Lemma \cite{GolseAve} and Mixture Lemma \cite{LiuYu1}. As far as we know, there is no analogy result for stationary problems defined on bounded domains addressed elsewhere.
 To take care of the regularity of $III$,
we  change the variables $\zeta'$ to the spherical  coordinates so that
\begin{equation}
\zeta'=(\rho\cos\theta,\rho\sin \theta\cos\phi,\rho\sin \theta\sin \phi).
\end{equation} 
Also, we change the traveling time to the traveling distance:
\begin{equation}
r=\rho t.
\end{equation}
Let $\hat{\zeta'}=\frac{\zeta'}{|\zeta'|}.$
Then, we can rewrite $III$ as 
\begin{equation}
\begin{split}
III=&\int_0^{\tau_-(x,\zeta)}e^{-\nu(|\zeta|) s}\int_0^\infty\int_0^{\pi}\int_0^{2\pi}\int_0^{|x-s\zeta-p(x-s\zeta ,\zeta')|}\\
&\quad  k(\zeta,\zeta')e^{-\frac{\nu(\rho) }\rho r}K(f)(x-s\zeta -r \hat{\zeta'},\zeta')\rho\sin\theta drd\phi d\theta d\rho ds\\
=&: \int_0^{\tau_-(x,\zeta)}e^{-\nu(|\zeta|) s}G(x-s \zeta ,\zeta)ds.
\end{split}
\end{equation}
 Notice that  we can parametrize $\Omega$ by $\theta$, $\phi$, and $r$, thanks to the convexity of $\Omega$.  Therefore, by regrouping the integrals, we can change the formulation to contain an  integral over space:  Let $x_0=x-s\zeta $ and $y=x-s\zeta -r \hat{\zeta'}$. We have

\begin{equation}
\begin{split}
G(x_0,\zeta)=&\int_0^\infty\int_{\Omega} k \left(\zeta,\rho\frac{(x_0-y)}{|x_0-y|} \right) e^{-\nu(\rho)\frac{|x_0-y|}{\rho}}\\
& \qquad \qquad \times K(f) \left( y, \rho\frac{(x_0-y)}{|x_0-y|} \right)\frac{\rho}{|x_0-y|^2}dyd\rho.
\end{split}
\end{equation} 
Notice that in the above formula, the velocity variables $\zeta'$ are replaced by the space variables $x_0$ and $y$, and therefore the regularity in velocity variables can be transferred to space variables. However, the singularity in the above integral formula does not allow us to differentiate $G(x_0,\zeta)$ with respect to $x_0$ directly. This is the reason why the result obtained in  \cite{RegularChen} is limited to the H\"older type continuity.  
In the present work, we overcome this obstacle by bootstrapping the regularity from H\"{o}lder continuity to differentiability with the help of divergence theorem (see Section \ref{SDiffVDF} ). Notice that there is only a very narrow window that one can carry out  this strategy.  To this aim, we make  big efforts to significantly refine H\"{o}lder type  estimates in Sections \ref{HolderRV}, \ref{Mixtureenhanced}, and \ref{LocalHatBoundary} to make it.  In addition, we encounter not only the aforementioned cruxes but also the difficulties due to the diffuse reflection boundary condition.

Now, we shall give a brief account of the strategy that we employ to overcome all the aforementioned subtleties.
 First,  plugging   \eqref{fbdry} into \eqref{inteq}, we have
\begin{equation}
   \label{inteq1}
   \begin{split}
    f(x,\zeta)&=\big(\psi(p(x,\zeta))+ T(p(x,\zeta))(|\zeta|^2-2)\big)M^{\frac12} e^{-\nu(|\zeta|) \tau_-(x,\zeta)}\\
    &\,\,\,+\int_0^{\tau_-(x,\zeta)}e^{-\nu(|\zeta|) s}K(f)(x-s\zeta, \zeta)ds.
   \end{split}
\end{equation}
 Notice that, by \eqref{defpsi},  $\psi$ is only a bounded function provided $f$ is  bounded. Hence, we cannot even take a formal derivative on the first term of the right hand side of \eqref{inteq1}. To deal with the differentiability of the first term of the right hand side of \eqref{inteq1}, we only need to take care of the differentiability of $\psi(p(x, \zeta))$. 
Plugging \eqref{inteq1}  into \eqref{defpsi}, we obtain
\begin{equation}
\label{defphi3}
\begin{split}
\psi(x)=&2\sqrt{\pi}\int_{\zeta\cdot n(x) > 0}T(p(x,\zeta))(|\zeta|^2-2)M(\zeta)e^{-\nu(|\zeta|)\tau_-{(x,\zeta)}}|\zeta\cdot n(x)|d\zeta\\
&+2\sqrt{\pi}\int_{\zeta\cdot n(x) > 0}\psi(p(x,\zeta))M(\zeta)e^{-\nu(|\zeta|)\tau_-{(x,\zeta)}}|\zeta\cdot n(x)|d\zeta\\
&+2\sqrt{\pi} \int_{\zeta\cdot n(x) > 0}\int_0^{\tau_-(x,\zeta)}e^{-\nu(|\zeta|)s}K(f)(x-s\zeta, \zeta)M^{\frac12}(\zeta)|\zeta\cdot n(x)|dsd\zeta\\=:& B_T+B_\psi+D_f.
\end{split}\end{equation}
In the above expression, the domain of integration depends on the space variable $x$, which is not convenient for taking the formal derivatives. On the other hand, in the formula of $B_\psi$,  the integrand $\psi$ is a function of $p(x, \zeta)$. However, we only know that $\psi$ is a bounded function. 
As we shall go into the details in Section \ref{SecDiffBpsi}, by using a sophisticated change of variables, we can rewrite $B_{\psi}$ as follows
\begin{equation}
\label{bpsi02}
\begin{aligned}
B_\psi(x)=& \frac2\pi\int_0^\infty\int_{\partial\Omega}\psi(y)e^{-l^2|x-y|^2}e^{-\frac{\nu(l|x-y|)}{l}} \\
 & \qquad \times [(x-y)\cdot n(x)]|(x-y)\cdot n(y)|l^3dA(y)dl.
\end{aligned}
\end{equation}
We notice that with the above expression, the domain of integration is fixed and the formal derivative of  $B_\psi(x)$ with respect to $x$ variable does not involve $\psi$. This creates a startup for our regularity analysis. We use the same fashion to deal with the regularity of $B_T$.

 

For the term of $D_f$,  applying a  similar approach as in \cite{RegularChen}, we can convert $D_f$ into the following formula 
\begin{equation}
\begin{split}
D_f=2{\pi}^{-\frac14}\int_0^\infty\int_{\Omega}
& e^{-\frac{\nu(\rho)}{\rho}|x-y|}K(f) \left(y ,\rho\frac{(x-y)}{|x-y|} \right)\\
& \times  \frac{(x-y)\cdot n(x)}{|x-y|}e^{-\frac{\rho^2}2}\frac{\rho^2}{|x-y|^2}dyd\rho.
\end{split}
\end{equation}
Similar to the treatment of $III$, the advantage we gain from the above transformation is that the regularity in velocity variables can be transferred to space variables. However,  if we differentiate $D_{f}$ with respect to $x$ variables directly, the formula has a singularity which damages the integrability of the resulting formula. Therefore,  one can only claim the H\"older type  regularity by an argument similar to \cite{RegularChen}. Nevertheless, we can in fact further bootstrap the regularity to differentiability. For the details of the treatment, see Section~\ref{SDiffDf}.

The organization of the rest part of this article is as follows. We recapitulate the important properties of the linearized collision operator $L$ in Section~\ref{collisionoperator}. In Section~\ref{prelim}, we prepare several useful auxiliary lemmas and propositions associated to the geometry of $\Omega$ which play crucial roles in the integrability arguments in the estimates of Section~\ref{SecDiffBpsi} - Section~\ref{SDiffVDF}. In Section~\ref{SDiffVDF}, we sum up the estimates from Section~\ref{SecDiffBpsi} to Section~\ref{SDiffDf} and conclude the differentiability in $x$ variables. Section~\ref{zetaDiff} is devoted to the differentiability in $\zeta$ variables.


\section{Properties of linearized collision operator \label{collisionoperator}}

In this section, we  summarize some known properties of the linearized collision operator $L$ defined in \eqref{Ldefine} with a
cross section satisfying our assumption \eqref{Bsaperate}
 (See \cite{Caflisch, ChenHsia, Grad}). $L$ can be decomposed into a multiplicative operator and an integral operator: \begin{equation}
L(f)=-\nu(|\zeta|)f+K(f),
\end{equation}
where 
\begin{equation}
\label{defK}
K(f)(x,\zeta)=\int_{\mathbb{R}^3}k(\zeta,\zeta_*)f(x,\zeta_*)d\zeta_*\end{equation} is symmetric, i.e.,
$$
k(\zeta,\zeta_*) = k(\zeta_*, \zeta).
$$

  The explicit expression of $\nu$ is
\begin{equation}
\nu(|\zeta|)=\beta_0\int_{\mathbb{R}^3}e^{-|\eta|^2}|\eta-\zeta|^\gamma d\eta,
\end{equation}
where $\beta_0=\int_0^{\frac{\pi}2}\beta(\theta)d\theta$. Let $0<\delta<1$. The collision frequency $\nu(|\zeta|)$ and the collision kernel $k(\zeta,\zeta_*)$ satisfy
\begin{align}
\label{estimatenu}
&\nu_0(1+|\zeta|)^\gamma\leq \nu(|\zeta|)\leq \nu_1(1+|\zeta|)^\gamma,\\
 \label{estimateK}
&|k(\zeta,\zeta_*)|\leq C_1|\zeta-\zeta_*|^{-1}(1+|\zeta|+|\zeta_*|)^{-(1-\gamma)}{e^{-{\frac{1-\delta}4}\left(|\zeta-\zeta_*|^2+(\frac{|\zeta|^2-|\zeta_*|^2}{|\zeta-\zeta_*|})^2\right)}},\\
 \label{gradianK}&\left| \frac{\partial}{\partial \zeta_i} k(\zeta,\zeta_*) \right|\leq C_2\frac{1+|\zeta|}{|\zeta-\zeta_*|^{2}}(1+|\zeta|+|\zeta_*|)^{-(1-\gamma)}{e^{-{\frac{1-\delta}4}\left(|\zeta-\zeta_*|^2+(\frac{|\zeta|^2-|\zeta_*|^2}{|\zeta-\zeta_*|})^2\right)}}.
\end{align}
Here,  the constants $0<\nu_0<\nu_1$ may depend on the potential and  $C_1$ and $C_2$  may depend on $\delta$ and the potential. 
Notice that \eqref{estimateK} was established in \cite{Caflisch} and \eqref{gradianK} can be concluded by the observation in \cite{ChenHsia} in case the cross section  satisfies \eqref{Bsaperate}.

Related to the above estimates,  the following proposition from \cite{Caflisch} is crucial in our study.
\begin{prop} \label{cafdecay} For any $\epsilon, a_1, a_2>0$,
\begin{equation}
\Big|\int_{\mathbb{R}^3} \frac1{|\eta-\zeta_*|^{3-\epsilon}}e^{-a_1|\eta-\zeta_*|^2-a_2\frac{(|\eta|^2-|\zeta_*|^2)^2}{|\eta-\zeta_*|^2}}d\zeta_*\Big|\leq C_4 (1+|\eta|)^{-1}, \end{equation}
\noindent where $C_4$ may depend on $\epsilon, a_1,$ and $a_2$.
\end{prop}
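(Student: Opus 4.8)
The final statement is Proposition~\ref{cafdecay}, a weighted-integral decay estimate due to Caflisch. Here is how I would prove it.

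\smallskip

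\textbf{Setup and reduction.} The plan is to estimate the integral $\mathcal{I}(\eta)=\int_{\mathbb{R}^3}|\eta-\zeta_*|^{-(3-\epsilon)}e^{-a_1|\eta-\zeta_*|^2-a_2(|\eta|^2-|\zeta_*|^2)^2/|\eta-\zeta_*|^2}\,d\zeta_*$ by splitting $\mathbb{R}^3$ into the region near $\eta$, where the $|\eta-\zeta_*|^{-(3-\epsilon)}$ factor is singular but integrable, and the region far from $\eta$, where the Gaussian factor in $|\eta-\zeta_*|$ provides decay. First I would introduce the change of variable $u=\zeta_*-\eta$, so that $\mathcal{I}(\eta)=\int_{\mathbb{R}^3}|u|^{-(3-\epsilon)}e^{-a_1|u|^2-a_2(2\eta\cdot u+|u|^2)^2/|u|^2}\,du$, using $|\eta|^2-|\zeta_*|^2=-2\eta\cdot u-|u|^2$. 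The only place the claimed decay factor $(1+|\eta|)^{-1}$ can come from is the exponent $-a_2(2\eta\cdot u+|u|^2)^2/|u|^2$; when $|\eta|$ is large this forces $u$ to lie near the cone $2\eta\cdot u+|u|^2\approx 0$, i.e. near the plane perpendicular to $\eta$ shifted slightly, which is a thin slab, hence the gain of one power of $|\eta|^{-1}$.

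\smallskip

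\textbf{Regime split and the main estimate.} For $|\eta|\leq 1$ the bound is trivial: one discards the second term in the exponent (it is nonpositive) and uses $\int_{|u|\le R}|u|^{-(3-\epsilon)}\,du<\infty$ together with $\int_{|u|\ge R}e^{-a_1|u|^2}\,du<\infty$, absorbing everything into the constant $C_4$. For $|\eta|\ge 1$, I would pass to coordinates adapted to $\eta$: write $u=(u_\parallel,u_\perp)$ with $u_\parallel$ the component along $\hat\eta$ and $u_\perp\in\mathbb{R}^2$ the perpendicular part, so $\eta\cdot u=|\eta|u_\parallel$. Split further according to whether $|u|\le 1/2$ or $|u|>1/2$.

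\smallskip

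On $|u|\le 1/2$: here $|u|^{-(3-\epsilon)}$ is the dominant feature; I would drop the $e^{-a_1|u|^2}$ factor and keep only the second exponential, bounding $(2|\eta|u_\parallel+|u|^2)^2/|u|^2\ge c\,(|\eta|u_\parallel)^2/|u|^2$ on a suitable subregion, or more cleanly, for fixed $u_\perp$ integrate in $u_\parallel$: the Gaussian weight in $u_\parallel$ has width $\sim |u|/|\eta|\le 1/(2|\eta|)$, producing a factor $|\eta|^{-1}$ after integrating the radial variable carefully in polar-type coordinates on the $(u_\parallel,u_\perp)$ space; the residual $\int |u|^{-(3-\epsilon)}$ over a region of the right dimension stays finite because $\epsilon>0$. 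On $|u|>1/2$: now $|u|^{-(3-\epsilon)}\le 2^{3-\epsilon}$ is harmless, and I would again integrate in $u_\parallel$ first for fixed $u_\perp$: the factor $e^{-a_2(2|\eta|u_\parallel+|u|^2)^2/|u|^2}$ is a Gaussian in $u_\parallel$ of width $\sim|u|/|\eta|$, so its $u_\parallel$-integral is $\lesssim |u|/|\eta|\le (1/|\eta|)(1+|u_\perp|+|u_\parallel|)$-type; combined with the genuine Gaussian $e^{-a_1|u|^2}$ that remains, the $u_\perp$-integral converges and the surviving $|u|/|\eta|$ gives the factor $|\eta|^{-1}$ (one checks $|u|\,e^{-a_1|u|^2/2}$ is bounded, and $e^{-a_1|u|^2/2}$ is still integrable). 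Patching the two regimes and recalling $1+|\eta|\le 2|\eta|$ for $|\eta|\ge 1$ yields $\mathcal{I}(\eta)\le C_4(1+|\eta|)^{-1}$.

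\smallskip

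\textbf{Main obstacle.} The delicate point is the near-origin region $|u|\le 1/2$, where the singular weight $|u|^{-(3-\epsilon)}$ and the $|\eta|$-gain compete: one must extract the $|\eta|^{-1}$ from the thin-slab concentration of $e^{-a_2(2|\eta|u_\parallel+|u|^2)^2/|u|^2}$ without losing integrability against $|u|^{-(3-\epsilon)}$. The clean way is to do the $u_\parallel$-integration at fixed $u_\perp$ before touching the radial singularity, using that on the slab $|2|\eta|u_\parallel+|u|^2|\lesssim |u|$ the measure in $u_\parallel$ is $\lesssim |u|/|\eta|$ while $|u|\approx|u_\perp|$ there; this turns the integrand into $\lesssim |\eta|^{-1}|u_\perp|^{-(2-\epsilon)}$ over $|u_\perp|\le 1/2$, which integrates in two dimensions precisely because $\epsilon>0$. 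This is exactly why the statement requires $\epsilon>0$ and why the estimate degenerates at $\epsilon=0$.
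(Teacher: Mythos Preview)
The paper does not actually prove Proposition~\ref{cafdecay}; it merely quotes the estimate from Caflisch~\cite{Caflisch} and uses it as a black box. So there is no in-paper proof to compare your proposal against.

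Your proposal is correct in spirit: the $(1+|\eta|)^{-1}$ gain does come from the concentration forced by the factor $e^{-a_2(2\eta\cdot u+|u|^2)^2/|u|^2}$, and your regime split $|u|\lessgtr 1/2$ together with the ``thin-slab'' heuristic for the $u_\parallel$-integration identifies the mechanism accurately. The one place your sketch is genuinely awkward is the near-origin piece in Cartesian coordinates: since $|u|^2=u_\parallel^2+|u_\perp|^2$ depends on $u_\parallel$, the weight $e^{-a_2(2|\eta|u_\parallel+|u|^2)^2/|u|^2}$ is not literally a Gaussian in $u_\parallel$ at fixed $u_\perp$, and turning your slab argument into a clean inequality requires some extra case-splitting (e.g.\ $|u_\parallel|\lessgtr|u_\perp|$) that you have only gestured at.

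The classical argument in \cite{Caflisch} avoids this altogether by passing to spherical coordinates for $w=\eta-\zeta_*$. Writing $r=|w|$ and $s=\cos\theta=\hat w\cdot\hat\eta$, one has $(|\eta|^2-|\zeta_*|^2)/|\eta-\zeta_*|=2|\eta|s-r$, so
\[
\mathcal I(\eta)=2\pi\int_0^\infty r^{\epsilon-1}e^{-a_1 r^2}\Bigl(\int_{-1}^{1} e^{-a_2(2|\eta|s-r)^2}\,ds\Bigr)dr,
\]
and the inner integral is bounded by $(2|\eta|)^{-1}\int_{\mathbb R}e^{-a_2 t^2}\,dt$ via the substitution $t=2|\eta|s-r$. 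The remaining $r$-integral is $\frac12 a_1^{-\epsilon/2}\Gamma(\epsilon/2)$, finite exactly when $\epsilon>0$. This produces $\mathcal I(\eta)\le C|\eta|^{-1}$ for $|\eta|>0$ in two lines, and combined with the trivial bound for $|\eta|\le 1$ gives the claim. Your Cartesian approach reaches the same conclusion, but the spherical-coordinate route is what you will find in the cited reference and is considerably cleaner to execute.
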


Using the \eqref{gradianK} and Proposition \ref{cafdecay}, we can conclude 
\begin{equation}
\label{CH}
\left\Vert \frac{\partial}{\partial \zeta_i} k(\zeta,\zeta_*) \right\Vert_{L^\infty_\zeta L^1_{\zeta_*}}<\infty,\ \left\Vert \frac{\partial}{\partial\zeta_i} k(\zeta,\zeta_*) \right\Vert_{L^\infty_{\zeta_*} L^1_{\zeta}}<\infty. \end{equation} Then, by Schur's test, we can conclude the following smooth effect of $K$ in velocity variable mentioned in \cite{ChenHsia}.
\begin{prop}
For $1\leq p\leq \infty$,\begin{equation}
\left\Vert \frac{\partial}{\partial \zeta_i} K(f) \right\Vert_{L^p_\zeta}\leq C\Vert f \Vert_{L^p_\zeta}.\end{equation}

\end{prop}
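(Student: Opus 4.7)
The plan is to identify $\partial_{\zeta_i} K$ as an integral operator with kernel $\partial_{\zeta_i} k(\zeta,\zeta_*)$ and then apply Schur's test using the two-sided integrability bounds in \eqref{CH}. The endpoint cases $p=1$ and $p=\infty$ follow immediately from the two bounds in \eqref{CH} individually (one direction of integration for each endpoint), and the intermediate cases then follow by the standard Schur/generalized Young framework, so no interpolation theorem is strictly needed.

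The first step is to justify differentiation under the integral sign. Given $f\in L^p_\zeta$, I would fix $\zeta$, let $h\to 0$ along the $e_i$ direction, and apply the mean value theorem to $k$ to write the difference quotient as $\int \partial_{\zeta_i} k(\zeta+\theta h e_i,\zeta_*) f(\zeta_*)\,d\zeta_*$ for some $\theta\in(0,1)$. The pointwise bound \eqref{gradianK}, combined with Proposition~\ref{cafdecay} applied with $\epsilon=1$, produces an $L^1_{\zeta_*}$ dominating function for the integrand uniform in small $h$, so dominated convergence yields
\begin{equation*}
\frac{\partial}{\partial\zeta_i}K(f)(\zeta)=\int_{\mathbb{R}^3}\frac{\partial}{\partial\zeta_i}k(\zeta,\zeta_*)\,f(\zeta_*)\,d\zeta_*.
\end{equation*}
(For $p<\infty$ one can localize to a ball in $\zeta_*$ where $f$ is locally integrable via Hölder, or dominate by $|f|$ and use the uniform $L^1_{\zeta_*}$ bound on the gradient kernel; the same bound \eqref{CH} takes care of both.)

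Once the derivative is represented as an integral operator, Schur's test finishes the job. Setting $M_1:=\sup_{\zeta}\int|\partial_{\zeta_i}k(\zeta,\zeta_*)|\,d\zeta_*$ and $M_2:=\sup_{\zeta_*}\int|\partial_{\zeta_i}k(\zeta,\zeta_*)|\,d\zeta$, both finite by \eqref{CH}, Schur's lemma gives the bound $\|\partial_{\zeta_i}K\|_{L^p\to L^p}\le M_1^{1/p'}M_2^{1/p}$ for every $1\le p\le\infty$ (with the usual convention at the endpoints), which is exactly what we want with $C:=\max(M_1,M_2)$.

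The main technical point—and the only nontrivial obstacle—is the verification that the singular factor $|\zeta-\zeta_*|^{-2}$ appearing in \eqref{gradianK} is tamed after integration in either variable. This is precisely where Proposition~\ref{cafdecay} is used: the Gaussian factor in $|\zeta-\zeta_*|^2$ and in $(|\zeta|^2-|\zeta_*|^2)^2/|\zeta-\zeta_*|^2$ reduces the integrand to something of the form $|\zeta-\zeta_*|^{-(3-1)}$ times a Gaussian, which is locally integrable in $\mathbb{R}^3$ and, by Proposition~\ref{cafdecay} (together with the polynomial prefactor $(1+|\zeta|)(1+|\zeta|+|\zeta_*|)^{-(1-\gamma)}$ and $0\le\gamma\le 1$), yields a uniform bound in either the $\zeta$ or $\zeta_*$ variable. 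Since the excerpt already asserts \eqref{CH} as a consequence of \eqref{gradianK} and Proposition~\ref{cafdecay}, this step can be invoked directly rather than redone.
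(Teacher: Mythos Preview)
Your proposal is correct and follows essentially the same approach as the paper: the paper's argument is simply to invoke \eqref{CH} (derived from \eqref{gradianK} and Proposition~\ref{cafdecay}) and then apply Schur's test, which is exactly what you do, with the added care of justifying differentiation under the integral sign. Your write-up is in fact more detailed than the paper's one-line appeal to Schur's test.
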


\section{Geometric Properties \label{prelim}}
In this section, we shall prove several important auxiliary lemmas and propositions  which play  important roles in our regularity theory.
\begin{lem}
\label{lemmaA}
Suppose $\Omega$ is a $C^2$ bounded strictly convex domain, whose boundary is  of positive Gaussian curvature. 
Then, there exists a constant  $C$  depending only on the geometry of domain $\Omega$ such that for any interior point $x \in \Omega$, we have
\begin{equation}\label{inversesquareint}
\int_{\partial\Omega}\frac1{|x-y|^2}dA(y)\leq C(\big|\ln d_x \big|+1),
\end{equation}
where  $d_x = d(x, \partial \Omega)$ and  $A(y)$ is the surface element of $\partial \Omega$ at point $y \in \partial \Omega$.
\end{lem}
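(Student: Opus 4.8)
The plan is to localize the surface integral near the point of $\partial\Omega$ closest to $x$ and to reduce it to the model integral $\int_0^{r_0}\rho(\rho^2+d_x^2)^{-1}\,d\rho$, whose logarithmic growth in $1/d_x$ produces the asserted bound. First I would dispose of the range $d_x\ge d_0$, where $d_0>0$ is a small geometric constant fixed below: there $|x-y|\ge d_x\ge d_0$ for every $y\in\partial\Omega$, so the integral is at most $|\partial\Omega|d_0^{-2}$, which is absorbed into $C(1+|\ln d_x|)$. Hence I assume $d_x\le d_0$ and pick a nearest point $y^\ast\in\partial\Omega$ to $x$; by the first order optimality condition $x-y^\ast$ is orthogonal to $T_{y^\ast}\partial\Omega$ and points into $\Omega$, so in Euclidean coordinates centered at $y^\ast$ with $e_3$ the inner unit normal one has $x=(0,0,d_x)$.

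The next ingredient is a uniform local description of $\partial\Omega$. By $C^2$ regularity and compactness of $\partial\Omega$ there exist constants $r_0,r_1>0$ and $M>0$ depending only on $\Omega$ such that, for every $p\in\partial\Omega$, in the coordinates centered at $p$ with $e_3$ the inner normal the set $\partial\Omega\cap B(p,r_1)$ lies inside a graph $\{(y',\phi_p(y')):|y'|<r_0\}$ with $\phi_p(0)=0$, $\nabla\phi_p(0)=0$, $\phi_p\ge0$ (by convexity), and $\|\phi_p\|_{C^2}\le M$; in particular $|\nabla\phi_p(y')|\le M|y'|\le Mr_0$ on the patch. I then split $\partial\Omega=\Gamma_{\mathrm{near}}\cup\Gamma_{\mathrm{far}}$ with $\Gamma_{\mathrm{near}}=\partial\Omega\cap B(y^\ast,r_1)$. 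On $\Gamma_{\mathrm{far}}$ one has $|x-y|\ge|y-y^\ast|-d_x\ge r_1-d_0\ge r_1/2$ once $d_0\le r_1/2$, so this part contributes at most $4|\partial\Omega|r_1^{-2}$.

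For the main term over $\Gamma_{\mathrm{near}}$, writing $y=(y',\phi(y'))$ gives $|x-y|^2=|y'|^2+(d_x-\phi(y'))^2\ge|y'|^2$, while $|x-y|\ge d_x$ because $y\in\partial\Omega$; hence $|x-y|^2\ge\max(|y'|^2,d_x^2)\ge\frac12(|y'|^2+d_x^2)$. Since $dA(y)=\sqrt{1+|\nabla\phi(y')|^2}\,dy'\le\sqrt{1+M^2r_0^2}\,dy'$, passing to polar coordinates in $y'$,
\[
\int_{\Gamma_{\mathrm{near}}}\frac{dA(y)}{|x-y|^2}\le 2\sqrt{1+M^2r_0^2}\int_{|y'|<r_0}\frac{dy'}{|y'|^2+d_x^2}=2\pi\sqrt{1+M^2r_0^2}\,\ln\frac{r_0^2+d_x^2}{d_x^2},
\]
and $\ln\frac{r_0^2+d_x^2}{d_x^2}\le|\ln(r_0^2+d_0^2)|+2|\ln d_x|$ once $d_0<1$. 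Adding the $\Gamma_{\mathrm{far}}$ contribution and choosing $C$ large in terms of $|\partial\Omega|,r_0,r_1,M,d_0$ (all geometric) yields $\int_{\partial\Omega}|x-y|^{-2}\,dA(y)\le C(1+|\ln d_x|)$.

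I expect the only delicate point to be the uniform local graph statement, namely that $r_0,r_1,M$ can be chosen independently of the base point $p\in\partial\Omega$; this is a routine consequence of $C^2$ regularity and compactness (cover $\partial\Omega$ by finitely many charts and take the worst constants), but it should be stated with some care, since everything after it is elementary. I note in passing that strict convexity and positivity of the Gaussian curvature are not actually needed for this particular estimate — pure $C^2$ regularity suffices, as the argument only exploits $(d_x-\phi)^2\ge0$ together with $|x-y|\ge d_x$ — although convexity is convenient in that it forces $\phi_p\ge0$ and keeps the nearest-point construction well behaved.
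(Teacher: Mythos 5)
Your proof is correct, and it takes a genuinely different and more elementary route than the paper's. The paper works intrinsically: it considers the set $D_0$ of \emph{all} critical points $p\in\partial\Omega$ of the distance function from $x$ (not just the nearest one), covers a neighbourhood of $D_0$ by geodesic discs via Vitali's covering lemma, and then uses the exponential-map inequality of Proposition~\ref{propositionA}, namely $|x-p_0|^2+\frac12|v|^2\le |Exp_{p_0}(v)-x|^2$, both to estimate the integral over each disc and to prove that the distance from $x$ to the complement of the discs is at least $r_1/\sqrt2$. You instead pass to extrinsic local graph coordinates over $T_{y^\ast}\partial\Omega$ at the single nearest point $y^\ast$, use the trivial algebraic identity $|x-y|^2=|y'|^2+(d_x-\phi(y'))^2\ge|y'|^2$ together with $|x-y|\ge d_x$ to get $|x-y|^2\ge\frac12(|y'|^2+d_x^2)$, and bound everything outside $B(y^\ast,r_1)$ by the reverse triangle inequality alone. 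Both arguments reduce to the same model integral $\int_0^{r_0}\rho(\rho^2+d_x^2)^{-1}d\rho$, but your split is cleaner: it avoids the covering lemma entirely, and as you correctly observe, it only uses $C^2$ regularity and compactness of $\partial\Omega$ rather than strict convexity or the positive-curvature hypothesis that the paper's exponential-map machinery (and its appeal to \cite{cheeger}) relies on. The only point worth stating with a bit more care, as you flag yourself, is the uniformity in $p$ of the graph constants $r_0,r_1,M$; beyond that the argument is complete.
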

 Lemma~\ref{lemmaA} is crucial in the proof of the refined H\"{o}lder type estimate \eqref{kdIest} in Lemma~\ref{HolderIandII} as well as Proposition~\ref{Hprop}.
A special regular case of Lemma~\ref{lemmaA} is the case where $\partial \Omega$ is a sphere. It is an extreme case that $\partial \Omega $ is a bounded subset of $ \mathbb{R}^2$.  One may prove Lemma~\ref{lemmaA} for the above two cases by direct calculation. To deal with the general case, we need the following proposition.
\begin{prop}
\label{propositionA}
Suppose $\Omega$ is a $C^2$ bounded strictly convex domain, whose boundary is  of positive Gaussian curvature. 
  Then, there exists a constant $r_1$ $($ see \eqref{defr} $)$ depending only on $\Omega$ such that for any $x \in \Omega$ and $p_0 \in \partial \Omega$ satisfying that $(p_0-x)$ is parallel to $n(p_0)$,where $n(p_0)$ is the unit outward normal of $\partial \Omega$ at $p_0$, there holds the following inequality
 \begin{align}\label{expmapineq}
 & |x-p_0|^2+\frac12|v|^2\leq |Exp_{p_0}(v)-x|^2,\\
\nonumber  &\mbox{ for }0 \leq |x-p_0| \leq r_1 \text{ and }  v \in T_{p_0}(\partial \Omega) \text{ with } \ 0\leq |v| \leq r_1.
  \end{align}
Here, $Exp_{p_0}$ is the exponential map from the tangent space $T_{p_0}(\partial \Omega)$ to $\partial \Omega$.

 \end{prop}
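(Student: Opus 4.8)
The plan is to work in normal coordinates at $p_0$ and reduce everything to a second-order Taylor expansion of the squared distance function. Fix $p_0 \in \partial\Omega$ and let $x$ lie on the inward normal ray, so $p_0 - x = |x-p_0|\, n(p_0)$. Choose an orthonormal basis $e_1, e_2$ of $T_{p_0}(\partial\Omega)$ and write a point $v \in T_{p_0}(\partial\Omega)$ as $v = (v_1, v_2)$. Set $q(v) = Exp_{p_0}(v) \in \partial\Omega$ and $\Phi(v) = |q(v) - x|^2$. The claimed inequality is $\Phi(v) \geq \Phi(0) + \tfrac12 |v|^2$ for $|x-p_0|, |v| \leq r_1$. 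Since $\Phi$ is $C^2$ (the domain is $C^2$, so $Exp_{p_0}$ is $C^1$ and $\partial\Omega$ is $C^2$; one checks the regularity is just enough), it suffices to show $\Phi(0)$ is a critical point and that the Hessian of $\Phi$ at $0$ dominates $\tfrac12 \mathrm{Id}$, uniformly, once $|x-p_0|$ is small; then Taylor's theorem with a uniform remainder bound closes it on a possibly smaller ball.

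First I would compute $\nabla\Phi(0)$. We have $\partial_{v_i}\Phi(v) = 2\langle q(v) - x, \partial_{v_i} q(v)\rangle$, and at $v=0$, $\partial_{v_i} q(0) = e_i \in T_{p_0}(\partial\Omega)$ while $q(0) - x = p_0 - x$ is normal to $T_{p_0}(\partial\Omega)$; hence $\nabla\Phi(0) = 0$, so $0$ is a critical point of $\Phi$ and $\Phi(0) = |x-p_0|^2$ is the first term on the right of \eqref{expmapineq}. Next, the Hessian: $\partial_{v_i}\partial_{v_j}\Phi(0) = 2\langle \partial_{v_i}\partial_{v_j} q(0), p_0 - x\rangle + 2\langle \partial_{v_i} q(0), \partial_{v_j} q(0)\rangle$. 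The second term equals $2\delta_{ij}$ since $\{e_i\}$ is orthonormal. For the first term, in normal coordinates the acceleration $\partial_{v_i}\partial_{v_j} q(0)$ is the second fundamental form term: its normal component at $p_0$ is $-\mathrm{II}_{ij}\, n(p_0)$ (sign depending on orientation of $n$; with $n$ outward and $\Omega$ convex, pairing $\langle \partial_{v_i}\partial_{v_j} q(0),\, p_0 - x\rangle = -|x-p_0|\,\langle \partial_{v_i}\partial_{v_j} q(0),\, n(p_0)\rangle \geq 0$ because curving away from the interior point). Consequently $\mathrm{Hess}\,\Phi(0) = 2\,\mathrm{Id} + 2|x-p_0|\, \mathrm{II} \succeq 2\,\mathrm{Id}$, using positivity of the Gaussian curvature — in fact of all principal curvatures, which strict convexity plus positive Gaussian curvature gives. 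So the Hessian is bounded below by $2\,\mathrm{Id}$, comfortably more than the $\mathrm{Id}$ we need.

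The remaining work, and the genuine technical point, is uniformity: I must produce a single $r_1$, depending only on $\Omega$, such that for all $p_0$ and all admissible $x, v$ the remainder in the second-order Taylor expansion of $\Phi$ is absorbed by the gap between $2\,\mathrm{Id}$ and $\mathrm{Id}$. Concretely, $\Phi(v) = |x-p_0|^2 + \tfrac12\langle \mathrm{Hess}\,\Phi(\xi v) v, v\rangle$ for some $\xi \in (0,1)$, and I need $\langle \mathrm{Hess}\,\Phi(\xi v)v, v\rangle \geq |v|^2$. Since $\mathrm{Hess}\,\Phi(0) \succeq 2\,\mathrm{Id}$ with a constant uniform in $p_0$ (by compactness of $\partial\Omega$ and continuity of the principal curvatures), it is enough that $\|\mathrm{Hess}\,\Phi(v) - \mathrm{Hess}\,\Phi(0)\| \leq 1$ for $|v| \leq r_1$; this in turn follows from a uniform modulus-of-continuity / uniform $C^2$ bound on $(p_0, v) \mapsto Exp_{p_0}(v)$ together with the constraint $|x - p_0| \leq r_1 \leq \mathrm{diam}\,\Omega$, all of which are controlled by the $C^2$ geometry of the compact hypersurface $\partial\Omega$. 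The main obstacle is thus bookkeeping the dependence of the third-derivative-type bounds on the base point $p_0$ — since $\partial\Omega$ is only assumed $C^2$, one should phrase the Taylor remainder via continuity of the (already second-order) Hessian rather than a third derivative, using a finite cover of $\partial\Omega$ by normal-coordinate charts and a Lebesgue-number argument to extract $r_1$; I would define $r_1$ explicitly as in \eqref{defr} by this covering construction and record that it depends only on $\Omega$.
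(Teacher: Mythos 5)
Your approach (a second-order Taylor expansion of $\Phi(v)=|Exp_{p_0}(v)-x|^2$ at the critical point $v=0$) is genuinely different from the paper's, which reduces to a single normal geodesic $\phi(s)$ and closes the estimate by elementary one-variable inequalities on $\phi_1(s),\phi_2(s),\phi_3(s)$ together with a case split on whether $d>\tfrac12 b s^2$. Your framing is conceptually cleaner, but as written it contains two genuine problems.

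First, the Hessian sign is wrong. For $x$ interior and on the inward normal ray we have $p_0-x=|x-p_0|\,n(p_0)$ (not $-|x-p_0|\,n(p_0)$ as in your parenthetical), and the ambient acceleration of a geodesic on a convex boundary points \emph{toward} the interior, so $\langle \partial_{v_i}\partial_{v_j}q(0),\,p_0-x\rangle\le 0$. Consequently
\begin{equation*}
\mathrm{Hess}\,\Phi(0)=2\,\mathrm{Id}-2|x-p_0|\,\widetilde{\mathrm{II}},\qquad \widetilde{\mathrm{II}}\succ 0,
\end{equation*}
which is $\preceq 2\,\mathrm{Id}$, not $\succeq 2\,\mathrm{Id}$ (check this on a sphere of radius $R$: in the diagonal direction the entry is $2-2d/R<2$). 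The curvature term \emph{fights} you; it is not free. The conclusion is salvageable precisely because of the constraint $r_1\le \tfrac1{4b}$ in \eqref{defr}: since the principal curvatures are uniformly bounded by $b$, one still gets $\mathrm{Hess}\,\Phi(0)\succeq (2-2br_1)\,\mathrm{Id}\succeq \tfrac32\,\mathrm{Id}$. But the argument you actually gave, "$\succeq 2\,\mathrm{Id}$, comfortably more than $\mathrm{Id}$ we need," rests on a misread of which way a convex boundary curves.

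Second, the remainder control is not justified at the stated regularity. You want to write $\Phi(v)=\Phi(0)+\tfrac12\langle\mathrm{Hess}\,\Phi(\xi v)\,v,v\rangle$ and appeal to a uniform modulus of continuity for $\mathrm{Hess}\,\Phi$. But $\partial\Omega$ is only $C^2$: a single geodesic $s\mapsto Exp_{p_0}(se)$ is $C^2$ in $s$, yet $v\mapsto Exp_{p_0}(v)$ need not be $C^2$ in $v$ (differentiating the exponential map transversally to the radial direction is the Jacobi-field problem and requires the Christoffel symbols, hence the metric, to be $C^1$ — i.e.\ a $C^3$ surface). So $\mathrm{Hess}\,\Phi(v)$ for $v\ne 0$ is not obviously well-defined, and your "uniform $C^2$ bound on $(p_0,v)\mapsto Exp_{p_0}(v)$" is not available from the hypotheses. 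The paper sidesteps exactly this by fixing the direction and estimating along the one-parameter family $\phi(s)$ only, which uses no more than $\phi\in C^2$. If you want to keep the Taylor-expansion framing, restrict it to the single-variable function $s\mapsto\Phi(se)$ for each unit $e$ and bound the one-dimensional second derivative using only the uniform normal-curvature bound $a\le|\phi''|\le b$; that recovers the paper's proof with a Taylor veneer and needs no regularity beyond $C^2$.
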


 \begin{proof}
 Our assumptions on $\Omega$ imply there are uniform positive upper and lower bounds for normal curvature and Gaussian curvature of $\partial \Omega$.  By \cite{cheeger},
there is a uniform radius, $r_0$, and a positive constant $a_0 < 1$
 such that for every point $p \in \partial \Omega$  the exponential map $Exp_{p}: T_p(\partial \Omega) \to \partial \Omega$ is one-to-one and the Jocobian satisfies
 \begin{equation} 
 \label{jacobian}
 a_0 \leq \Big|\det{ \left(\frac{\partial Exp_p}{ \partial X} \right)}\Big| \leq 1
\end{equation} 
  within the $r_0-$neighborhood of $T_p(\partial \Omega)$. We are going to estimate the distance between $x \in \Omega$ and a point in the geodesic disc centered at $p_0$.   We choose the coordinate
such that $p_0=(0,0,0)$, $x=(0,0,-d)$. Without loss of generality, we  only need to consider the points on the normal geodesic $\phi(s)=(\phi_1(s),\phi_2(s),\phi_3(s))$ with $\phi(0)=(0,0,0)$ and $\phi'(0)=(1, 0, 0)$.
 Since normal curvature is bounded, there exist constants $0< a < b$ independent of $p_0$ and $\phi$ such that
 \begin{equation}
 \label{normalcurvature}
 0<a\leq \left| \frac{d^2}{ds^2}\phi(s) \right|\leq b,
 \end{equation}
 for all $s \in (-\delta, \delta)$.
By $\phi'(0)=(1, 0, 0)$,  we derive from \eqref{normalcurvature}  that
 \begin{align}
\label{phiprime1}1-bs &\leq \phi_1'(s)\leq 1+bs,\\ 
\label{phiprime2}-bs&\leq  \phi_2'(s)\leq bs,\\
\label{phiprime3} -bs&\leq  \phi_3'(s)\leq bs.
 \end{align}
 Therefore,
  \begin{align}
\label{phi1}s-\frac12bs^2 &\leq \phi_1(s)\leq s+\frac12bs^2,\\
\label{phi2} -\frac12bs^2&\leq  \phi_2(s)\leq \frac12bs^2,\\ 
\label{phi3}-\frac12bs^2&\leq  \phi_3(s)\leq \frac12bs^2.
 \end{align}
 For further discussion, we define 
 \begin{equation}
 \label{defr}
 r_1 := \min \Big\{ r_0, \frac{1}{4b} \Big\}. 
 \end{equation}
 In the following analysis, we assume that $$ 0 \le s \le r_1,  \text{ and }  0< d \le r_1. $$
 
  \textit{Case 1: $d > \frac{1}{2}bs^2$.}
  In this case, we have  
  \begin{equation*}
  \phi_3(s)+d  >  d -\frac{1}{2} b s^2,
\end{equation*}
and  
\begin{align*}
|\phi(s)- x|^2 & \ge \phi_1^2(s) + \phi^2_2(s) + \left( d - \frac{1}{2}bs^2 \right)^2\\
&\ge \left(s - \frac{1}{2}bs^2 \right)^2 + 0^2 + \left( d -\frac{1}{2}bs^2 \right)^2 \\
&= d^2 + (1-bd -bs)s^2 + \frac{1}{2}b^2 s^4 \\
& \ge d^2 + \frac{1}{2} s^2. 
\end{align*}
 \textit{Case 2: $d \le  \frac{1}{2}bs^2$.} In this case, we see that
\begin{align*}
|\phi(s)- x|^2 & \ge \phi_1^2(s) \\
& \ge \left( s- \frac{1}{2}bs^2 \right)^2\\
&= \left( \frac{1}{2}bs^2 \right)^2 + (1-bs)s^2\\
& \ge d^2 +\frac{1}{2}s^2.
\end{align*} 
Now, for   $ v \in T_{p_0}(\partial \Omega) \text{ with } \ 0\leq |v| \leq r_1,$ by choosing the coordinate properly, we have $\phi(s)=Exp_{p_0}(v)$ with $s = |v|.$ Summing up  \textit{Case 1} and \textit{Case 2}, we conclude Proposition~\ref{propositionA}.
\end{proof}
\begin{rmk}
\label{rem32}
Taking \eqref{defr} into account, by \eqref{phi1}, we see that 
\begin{equation}
s \le \frac{8}{7} |\phi(s) - \phi(0)|.
\end{equation}
 
\end{rmk}

\textit{Proof of Lemma~\ref{lemmaA}.}
  We use the notation
\begin{equation}
GB(p,r):=\big\{ Exp_{p}(v)\big| |v|<r \big\}
\end{equation}
to denote the geodesic disc on $\partial \Omega$ centered at $p$ with geodesic radius $r.$ 
We first take care of the case where $d_x\leq {r_1}$.
We define
 \begin{align}&D_0:=\big\{p\in \partial\Omega\big|(p-x)\parallelsum n(p),\ d_x\leq|p-x|\leq r_1 \big\},\\
 &D_1:=\bigcup_{p \in D_0}GB \left( p, \frac{1}{10}r_1\right).\end{align}
By the Vitali's covering lemma, we see that there exists a countable subcollection  $\tilde{D}_0$  of $D_0$ such that $ \bigcup_{p \in \tilde{D}_0 }GB(p, \frac{1}{10}r_1) $ is a disjoint union of  geodesic discs $GB(p, \frac{1}{10}r_1)$  and  
\begin{equation}
    D_1      \subset \bigcup_{p \in \tilde{D}_0}GB \left(p, \frac{5}{10}r_1 \right). 
\end{equation}
 On the other hand, due to \eqref{jacobian}, there is a uniform lower bound $A_1$  of the area of $GB(p, \frac{1}{10}r_1)$ for any $p \in \partial \Omega$. Since $\bigcup_{p \in \tilde{D}_0 }GB(p, \frac{1}{10}r_1)$ is a disjoint union of $GB(p, \frac{1}{10}r_1)$ and the area of $\partial \Omega$ (denoted by $A_2$) is finite, the cardinality  of $\tilde{D}_0$ satisfies
\begin{equation}
\#(\tilde{D}_0) \le \frac{A_2}{A_1} < \infty. 
\end{equation} 
 We remark that, by the above argument, the upper bound of the cardinality $\#(\tilde{D}_0)$ is independent of  the position of $x.$ For the sake of convenience, we list all the elements of  $\tilde{D}_0$ as follows
 \begin{equation}
 \tilde{D}_0 = \Big\{p_1, p_2, \cdots, p_m \Big\},
 \end{equation}
 where $m = \#(\tilde{D}_0)$, and define the sets
\begin{align*}
D_2 & := \bigcup_{i=1}^{m}GB(p_i,r_1)   \text{ and } \\
D_3 & := \partial \Omega\setminus D_2.
\end{align*}
Note that since $D_2$ is an open subset of $\partial \Omega$, $D_3$ is a compact set. Hence, there is a point $p'\in D_3$ that realizes the distance of $x$ and $\partial \Omega.$ 
 
 We claim that \begin{equation}
 |p'-x|= d(x, D_3) > \frac{r_1}{\sqrt{2}}.\end{equation}
 Suppose, on the contrary,
 $|p'-x|=d(x, D_3) \le \frac{r_1}{\sqrt{2}}$.
We first observe that if $p'$ is an interior point of $D_3$, due to  $|p'-x|=d(x, D_3)$, we see that $(p'-x)\parallelsum n(p')$ and hence $p'\in D_0$. This violates to $p'\in D_3$. On the other hand, if $p' \in \partial D_3$, then there exists $p_j \in \tilde{D}_0$ such that $p' \in \partial GB(p_j, r_1).$ Therefore, there exists $v \in T_{p_j}(\partial \Omega)$ with $|v|=r_1$ such that $Exp_{p_j}(v)=p'.$ Applying Proposition~\ref{propositionA}, we derive that
\begin{equation}
|x-p_j|^2 + \frac{1}{2}r_1^2 \le |p'-x|^2 \le \frac{1}{2}r_1^2. 
\end{equation}
This implies $x = p_j$ which is a contradiction. We then conclude the claim. 

It is easy to see that
\begin{equation}
\label{bigthanr}
\int_{D_3}\frac{1}{|x-y|^2}dA(y)\leq \frac2{r_1^2}\int_{\partial \Omega} dA(y)\leq \frac{2A_2}{r_1^2}.\end{equation}
On the other hand,
\begin{equation}
\label{smallthanr}
\begin{split}
&\int_{GB(p_i, r_1)}\frac{1}{|x-y|^2}dA(y)\leq \int_0^{r_1}\int_0^{2\pi}\frac1{|x-p_i|^2+\frac12s^2}  sdsd\theta\\
&\le 2\pi \int_0^{\frac12 r_1^2} \frac1{d_x^2+u}du \\ 
& \leq 4\pi\big|\ln d_x \big|+2\pi \left|\ln \left( d_x^2+\frac{1}{2}r_1^2 \right) \right|\\&\leq C \left(1+\big|\ln d_x\big| \right).\end{split}\end{equation}
Taking \eqref{bigthanr} and \eqref{smallthanr} into account, we prove Lemma~\ref{lemmaA} for the case where $d_x \le r_1.$
For the case where $d_x > r_1$, we may bound the left hand side of \eqref{inversesquareint} by
the right hand side of \eqref{bigthanr}. This completes the proof of  Lemma~\ref{lemmaA}. 
\qed

Next, we investigate the estimates given in the following lemma.

\begin{lem}
\label{lemmaB}
Adopting the same geometric assumption on $\Omega$ as stated in Lemma~\ref{lemmaA}, let $r_1$ be as defined by \eqref{defr} in Lemma~\ref{lemmaA}. Then, there exists a constant $C$ such that for any  $x, y \in \partial \Omega$ with $y \in GB(x,r_1)$, we have
\begin{align}
\label{lemmaB1}&|n(x)\cdot(x-y)|\leq C|x-y|^2,\\
\label{lemmaB2}&|n(y)\cdot(x-y)|\leq C|x-y|^2,\\
\label{lemmaB3}&|n(y)\cdot v|=|n(y)\cdot (v-v')|\leq C|x-y|,
\end{align}
where $v \in T_x(\partial \Omega)$ is a unit vector and  $v' \in T_y(\partial \Omega)$ is the parallel transport of $v$ from $T_x(\partial \Omega)$ to $T_y(\partial \Omega)$.
\end{lem}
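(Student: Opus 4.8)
The plan is to work entirely inside the single geodesic ball $GB(x, r_1)$ and to exploit the quantitative control on the exponential map recorded in the proof of Proposition~\ref{propositionA}. Fix $x \in \partial\Omega$ and $y \in GB(x, r_1)$, and let $v \in T_x(\partial\Omega)$ be the tangent vector with $|v| = d_{\partial\Omega}(x,y) =: s \le r_1$ and $\mathrm{Exp}_x(v) = y$. As in that proof I would set up normal coordinates centred at $x$ with $x = (0,0,0)$, the tangent plane $T_x(\partial\Omega) = \{ x_3 = 0\}$, and $n(x) = (0,0,\pm 1)$; by rotating I may take $v$ along the first axis, so that the relevant geodesic is the curve $\phi(s) = (\phi_1(s), \phi_2(s), \phi_3(s))$ with $\phi(0) = 0$, $\phi'(0) = (1,0,0)$. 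The curvature bounds \eqref{normalcurvature} together with the Taylor estimates \eqref{phi1}–\eqref{phi3} already derived give $|\phi_3(s)| \le \tfrac12 b s^2$, and by Remark~\ref{rem32} one has $s \le \tfrac87 |x - y|$; combining these, $|n(x)\cdot(x-y)| = |\phi_3(s)| \le \tfrac12 b s^2 \le \tfrac12 b (\tfrac87)^2 |x-y|^2$, which is \eqref{lemmaB1}.

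For \eqref{lemmaB2} I would argue symmetrically but taking care that $x$ need not lie in $GB(y, r_1)$ exactly — however $d_{\partial\Omega}(x,y) = s \le r_1 = \min\{r_0, \tfrac1{4b}\}$, so $x \in GB(y, r_1)$ as well, and the same argument with the roles of $x$ and $y$ interchanged (re-expanding along the reversed geodesic $s \mapsto \phi(s_0 - s)$ based at $y$) gives $|n(y)\cdot(x-y)| \le C|x-y|^2$ with the same structural constant. Alternatively, and perhaps more cleanly, I would estimate the difference $|n(x) - n(y)| \le C\, s \le C'|x-y|$ directly from the $C^2$ (bounded second fundamental form) hypothesis — the Gauss map is Lipschitz along $\partial\Omega$ — and then write $n(y)\cdot(x-y) = n(x)\cdot(x-y) + (n(y)-n(x))\cdot(x-y)$; the first term is $O(|x-y|^2)$ by \eqref{lemmaB1}, but the second term is only $O(|x-y|^2)$ after one notices that $(x-y)$ decomposes as a tangential part at $x$ of size $\le |x-y|$ plus a normal part of size $O(|x-y|^2)$, and pairing $(n(y)-n(x))$ against the tangential part still needs an extra gain. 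So the symmetric geodesic argument is the safer route and I would present that.

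Estimate \eqref{lemmaB3} is the place I expect the real work. Here $v' \in T_y(\partial\Omega)$ is the parallel transport of the unit vector $v \in T_x(\partial\Omega)$ along the connecting geodesic, and one must show $|n(y)\cdot v| \le C|x-y|$; the hint $n(y)\cdot v = n(y)\cdot(v - v')$ is the key, since $n(y) \perp v'$ by definition of parallel transport within $\partial\Omega$ (parallel transport in the induced connection keeps tangent vectors tangent). So it suffices to bound $|v - v'|$, viewed as vectors in $\mathbb{R}^3$. Parallel transport along a geodesic of length $s$ rotates tangent vectors by an amount controlled by the integral of the second fundamental form along the geodesic: the extrinsic derivative of a parallel field $V(s)$ along $\phi$ satisfies $\frac{d}{ds}V(s) = \mathrm{(normal\ component)} = \langle \mathrm{II}(\phi'(s), V(s)), n\rangle\, n$, whose norm is bounded by $b$ since the normal curvatures are $\le b$. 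Integrating, $|v - v'| = |V(0) - V(s)| \le \int_0^s |V'| \le b s \le b\cdot\tfrac87|x-y|$, which gives \eqref{lemmaB3} with $C = \tfrac{8b}{7}$. The main obstacle is thus being careful about what "parallel transport" means and verifying that its extrinsic (ambient $\mathbb{R}^3$) variation is governed by the same curvature bound $b$ already in hand; once that Gauss–Weingarten computation is in place, all three inequalities follow with explicit constants depending only on $b$ (hence only on the geometry of $\Omega$) and the factor $\tfrac87$ from Remark~\ref{rem32}.
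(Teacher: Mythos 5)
Your proof is correct, and for \eqref{lemmaB1} and \eqref{lemmaB2} it is essentially the paper's argument: set up normal coordinates at $x$, expand the connecting geodesic $\phi$ via \eqref{phi1}--\eqref{phi3}, read off $|n(x)\cdot(x-y)| = |\phi_3(\tau)| \le \tfrac12 b\tau^2$, and convert $\tau$ to $|x-y|$ using $|x-y|\ge\tfrac12\tau$ (or Remark~\ref{rem32}); \eqref{lemmaB2} then follows by symmetry. Incidentally, your hesitation about the ``alternative'' route to \eqref{lemmaB2} is unfounded: once $|n(x)-n(y)| \le C|x-y|$ is in hand (from the bounded shape operator), the cross term $|(n(y)-n(x))\cdot(x-y)| \le C|x-y|^2$ already, without any further splitting of $x-y$ into tangential and normal parts, so either route closes.

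For \eqref{lemmaB3} your argument is actually more careful than what is written in the paper. The paper's proof passes through the identity $|v-v'| = |\phi'(0)-\phi'(\tau)|$, which is only literally valid when $v$ is the unit tangent of the geodesic from $x$ to $y$; but the lemma, as stated, allows an arbitrary unit $v\in T_x(\partial\Omega)$, and this generality is genuinely used in the proof of Lemma~\ref{difBpsi}, where $v$ is the fixed direction of differentiation while $y$ sweeps over $GB(x,r_1)$. Your Gauss--Weingarten computation resolves this cleanly: for a parallel field $V(s)$ along $\phi$ one has $\tfrac{d}{ds}V(s) = \mathrm{II}\bigl(\phi'(s),V(s)\bigr)\,n(\phi(s))$, so $\bigl|\tfrac{d}{ds}V\bigr|\le b$ by the normal-curvature bound \eqref{normalcurvature}, hence $|v-v'|\le bs\le\tfrac{8b}{7}|x-y|$, uniformly in the choice of $v$. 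This recovers the paper's conclusion with the same structural constant and fills a small gap in its exposition.
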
  

The above lemma is crucial in the proofs of the Lemma~\ref{difBpsi} and Lemma~\ref{Dfdiff}.   These geometric observations can resolve the  difficulty from seemingly critical singularity (barely non-integrable) on a surface encountered in the proofs.   
\begin{proof}
By choosing an appropriate coordinate system, we may assume $x=(0,0,0)$, $n(x)=(0,0,1)$ and $\phi(s)$ is the normal geodesic on $\partial \Omega$ connecting $x$ and $y$ within the geodesic disc $GB(x, r_1)$ such that 
\begin{equation*}
\left\{
\begin{aligned}
& \phi(0) = x,\\
& \phi'(0)=(1,0,0),\\
& Exp_{x}((\tau, 0,0))=\phi(\tau)=y.
\end{aligned}
\right.
\end{equation*} 
Replacing $s$ by $\tau$ in the estimates \eqref{phi1}-\eqref{phi3}, we obtain 
\begin{align*}
& |x-y|\ge |\phi_1(\tau)| \ge \tau - \frac{1}{2}b\tau^2 \ge (1 - \frac{1}{2}b r_1)\tau \ge \frac{1}{2} \tau,   \,\,  \text{ and } \\
&|n(x)\cdot(x-y)|= |\phi_3(\tau)|\le \frac{1}{2}b \tau^2 \le 2b |x-y|^2.
\end{align*}
 This proves \eqref{lemmaB1}. By symmetry, \eqref{lemmaB2} is derived from \eqref{lemmaB1}.
To see \eqref{lemmaB3}, by replacing $s$ by $\tau$ in \eqref{phiprime1}-\eqref{phiprime3}, we obtain 
\begin{align*}
|n(y)\cdot v| &=|n(y)\cdot (v-v')| \le |v-v'|\\
              &=|\phi'(0)-\phi'(\tau)| \le \sqrt{3}b\tau \\
              &\le 2 \sqrt{3}b |x-y|.
\end{align*}
Finally, we may choose $C=4b$ so that \eqref{lemmaB1}-\eqref{lemmaB3} hold true.
\end{proof}

The next lemma is an important ingredient of the proof  of the  H\"older type estimate up to the boundary, Lemma~\ref{HolderBoundary}. 

\begin{lem}\label{Atboundarysqrt}Let $\Omega$ be a $C^2$  bounded strictly convex domain in $\mathbb{R}^3$ such that $\partial\Omega$ is of positive Gaussian curvature. Then, there exists $R_0 > 0$ depending only on $\Omega$ such that  if $x\in\partial\Omega$, $y \in \Omega$, and
   \begin{align} \label{Nearboundary}
 d_y \leq R_0,
\end{align} 
then a point $Y\in\partial \Omega$ such that $d(Y,y)=d_y$ is unique. Furthermore, there exist $C_1'$, $C_2'>0$ such that, for $y \in \Omega$ satisfying $(\ref{Nearboundary})$ and $x \in \partial \Omega$, if 
\begin{equation}
n(Y)\cdot(x-y)\geq 0,
\end{equation}
then \begin{equation}\label{sqrtgeq}
|x-y|\leq C_1' d_y^{\frac12}, \end{equation}
or if \begin{equation}
n(Y)\cdot(x-y)\leq 0,\end{equation}
then\begin{equation}
|x-y|\geq C_2'd_y^{\frac12}.\end{equation}

\end{lem}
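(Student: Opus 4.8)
The plan is to deduce the entire statement from a single two-sided comparison between $-\,n(Y)\cdot(x-Y)$ and $|x-Y|^2$ for $x,Y\in\partial\Omega$, together with the elementary identity relating $n(Y)\cdot(x-y)$ to $d_y$ and $-\,n(Y)\cdot(x-Y)$. As the first step I would record the uniform curvature information already used in the proof of Proposition~\ref{propositionA}: the hypotheses on $\Omega$ furnish constants $0<\kappa_{\min}\le\kappa_{\max}<\infty$ bounding every normal (equivalently, principal) curvature of $\partial\Omega$ from below and above. The threshold $R_0>0$ will be taken at the end as the minimum of finitely many constants depending only on $\Omega$, and throughout we assume $d_y\le R_0$; note that since $y\in\Omega$ is interior, any nearest point $Y$ satisfies $y=Y-d_yn(Y)$.

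\emph{Uniqueness of the nearest point.} If $Y_1,Y_2\in\partial\Omega$ both realize $d_y$, then $y=Y_i-d_yn(Y_i)$ for $i=1,2$, hence $Y_2-Y_1=d_y\bigl(n(Y_2)-n(Y_1)\bigr)$; moreover $|Y_1-Y_2|\le|Y_1-y|+|y-Y_2|=2d_y\le 2R_0$, so for $R_0$ small (depending only on $\Omega$) one has $Y_2\in GB(Y_1,r_1)$. Integrating the shape operator, whose operator norm is at most $\kappa_{\max}$, along the unit-speed normal geodesic from $Y_1$ to $Y_2$ and using Remark~\ref{rem32} to bound its length by $\tfrac{8}{7}|Y_1-Y_2|$, we get $|n(Y_2)-n(Y_1)|\le\tfrac{8}{7}\kappa_{\max}|Y_1-Y_2|$. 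Therefore $|Y_1-Y_2|\le\tfrac{8}{7}\kappa_{\max}\,d_y\,|Y_1-Y_2|$, which forces $Y_1=Y_2$ as soon as $R_0<\tfrac{7}{8\kappa_{\max}}$.

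\emph{The comparison estimate.} Next I would establish that there are constants $c_1,c_2>0$ depending only on $\Omega$ with
\[
c_1|x-Y|^2\ \le\ -\,n(Y)\cdot(x-Y)\ \le\ c_2|x-Y|^2\qquad\text{for all }x,Y\in\partial\Omega .
\]
For $x\in GB(Y,r_1)$ this is the second–order graph expansion of $\partial\Omega$ over $T_Y(\partial\Omega)$: writing the boundary locally as a graph whose quadratic part is the second fundamental form at $Y$ (eigenvalues in $[\kappa_{\min},\kappa_{\max}]$) and estimating exactly as in the proof of Lemma~\ref{lemmaB} via \eqref{phi1}--\eqref{phi3}, one obtains both bounds with $c_1,c_2$ comparable to $\kappa_{\min}$ and $\kappa_{\max}$ (the upper bound refines \eqref{lemmaB1}--\eqref{lemmaB2}; the lower bound uses $\kappa_{\min}>0$). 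For $x\notin GB(Y,r_1)$ the quantity $|x-Y|$ is bounded below by a positive constant and above by $\mathrm{diam}\,\Omega$, while $-\,n(Y)\cdot(x-Y)$ is bounded below by a positive constant by strict convexity and compactness and above by $\mathrm{diam}\,\Omega$; so the displayed inequality persists after shrinking $c_1$ and enlarging $c_2$.

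\emph{Conclusion and main obstacle.} Fix $y$ with $d_y\le R_0$, let $Y$ be its (now unique) nearest point, and let $x\in\partial\Omega$. Put $h:=-\,n(Y)\cdot(x-Y)\ge0$ (convexity); since $x-y=(x-Y)+d_yn(Y)$ one computes $n(Y)\cdot(x-y)=n(Y)\cdot(x-Y)+d_y=d_y-h$. If $n(Y)\cdot(x-y)\ge0$, then $h\le d_y$, so $c_1|x-Y|^2\le d_y$ and, by the triangle inequality, $|x-y|\le|x-Y|+d_y\le c_1^{-1/2}d_y^{1/2}+d_y\le C_1'\,d_y^{1/2}$. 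If $n(Y)\cdot(x-y)\le0$, then $h\ge d_y$, so $c_2|x-Y|^2\ge d_y$ and $|x-y|\ge|x-Y|-d_y\ge c_2^{-1/2}d_y^{1/2}-d_y$; requiring in addition $R_0\le\tfrac{1}{4c_2}$ makes the right-hand side at least $\tfrac12 c_2^{-1/2}d_y^{1/2}=:C_2'\,d_y^{1/2}$. Taking $R_0$ to be the minimum of the thresholds occurring above (all depending only on $\Omega$) completes the argument. The genuinely delicate points are the uniqueness of the nearest point — i.e.\ injectivity of the inward normal map on a collar, for which the uniform curvature bound and the geodesic-versus-Euclidean comparison of Remark~\ref{rem32} are essential — and upgrading the comparison estimate from a local statement near $Y$ to one uniform over all of $\partial\Omega$; once these are secured, the two stated inequalities drop out of the identity $n(Y)\cdot(x-y)=d_y-h$ and the triangle inequality.
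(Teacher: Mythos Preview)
Your proof is correct and takes a genuinely different route from the paper's. The paper encodes the uniform two-sided curvature bound as uniform interior and exterior tangent spheres of radii $R_2<R_3$ at every boundary point; it then argues each inequality by tracing the segment $\overline{xy}$ through an intersection with the appropriate tangent sphere and computing with half-angle formulas, obtaining the explicit constants $C_1'=\sqrt{2R_3}$ and $C_2'=\sqrt{R_2}$ directly, with $R_0=\min(R_1,R_2)$ coming only from the tubular-neighborhood radius $R_1$ and the inner rolling radius. You instead package the same curvature information as the global two-sided comparison $c_1|x-Y|^2\le -\,n(Y)\cdot(x-Y)\le c_2|x-Y|^2$ and then read off both inequalities from the identity $n(Y)\cdot(x-y)=d_y-h$ together with the triangle inequality; this is slicker and makes the role of the sign condition completely transparent, at the cost of an extra smallness constraint on $R_0$ for the lower bound and less explicit constants. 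One minor remark: your reference to \eqref{phi1}--\eqref{phi3} for the local comparison really only supplies the upper bound (those estimates use only the curvature upper bound $b$); the lower bound genuinely needs the graph expansion with $\kappa_{\min}>0$, as you note parenthetically, so it would be cleaner to invoke the graph representation directly rather than the geodesic estimates of Proposition~\ref{propositionA}. Your uniqueness argument via the shape-operator Lipschitz bound on the Gauss map is also more self-contained than the paper's one-line ``it is well known''.
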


\begin{proof} It is well known that there exists $R_1>0$ such that $d_y\leq R_1$ implies the existence of unique projection $Y$ on $\partial \Omega$.  The important task is to prove the second part of the lemma. Because of the assumption on $\Omega$, there exist $R_3>R_2>0$ such that for every point $p$ on $\partial \Omega$ there exist a sphere $ S_o(p)$ with radius $R_3$  and a sphere $S_i(p)$ with radius $R_2$ both tangent to $\partial \Omega$ at $p$ and $S_o(p)$ contains the whole $\Omega$ and  while $S_i(p)$ is contained completely within $\Omega$.  We let $R_0=\min(R_1,R_2)$ and consider $y$ with $d_y \leq R_0$. We name the centers  of $S_o(Y)$ and $S_i(Y)$ as $Y_o$ and $Y_i$ respectively.  Also, we name the plane perpendicular to $n(Y)$ passing $y$ as $L$.  $L$ divides $\partial\Omega$ into two components.  If $n(Y)\cdot(x-y)\geq 0$ then $x$ and $Y$ fall in the same component. $\overrightarrow{yx}$ intersects $S_o(Y)$ at one point $X'$. Let $L'$ be the plane passing $x$, $y$, and $Y_o$. There are two intersection points among $L$, $L'$, and $S_o(Y)$. We name the one closer to $X'$ as $A$. We can observe\begin{equation} |x-y|\leq |X'-y|\leq |A-y|.\end{equation}

Let $\theta=\angle {AY_oy}$. Then, \begin{equation}
\cos\theta=\frac{|y-Y_o|}{|A-Y_o|}=\frac{R_3-d_y}{R_3}=1-\frac{d_y}{R_3}.\end{equation}
Therefore,
\begin{equation}
\frac{d_y}{R_3}=1-\cos\theta=2\sin^2{\frac{\theta}2}.
\end{equation} 
We obtain\begin{equation}
\sin\frac{\theta}2=\sqrt{\frac{d_y}{2R_3}}. \end{equation}On the other hand, we have
\begin{equation} |A-y|=R_3\sin\theta=2R_3\sin\frac{\theta}2\cos\frac{\theta}2\leq \sqrt{2R_3}\sqrt{d_y}.
\end{equation}
We finished the proof of \eqref{sqrtgeq}. If $n(Y)\cdot (x-y)\leq0$, then $x$ and $Y$ are on different components. Let $B$ be the intersection point between $\overline{xy} $ and $S_i(Y)$. 
We have\begin{equation}|x-y|>|y-B|.\end{equation} We name the plane passing $y$, $Y_i$, and $x$ as $L_1$ and the plane perpendicular to $n(Y)$ passing $Y_i$ as $L_2$. 
There are two intersection points among  $L$, $L_1$, and $S_i(Y)$.
We denote the intersection point which is closer to $B$  as $U$. If $B$ lays between $L$ and $L_2$, then \begin{equation}\angle{ BUy}\geq \frac{\pi}2.\end{equation} Therefore,\begin{equation}|B-y|\geq |U-y|.\end{equation}  Let $\theta'=\angle UY_iy$. Similarly, we have\begin{equation}
\sin \frac{\theta'}2=\sqrt{\frac{d_y}{2R_2}}\end{equation}
Therefore,\begin{equation}\begin{split}|y-U|&=R_2\sin\theta'=2R_2\sin\frac{\theta'}2\cos\frac{\theta'}2\\&\geq\sqrt{2}R_2\sin\frac{\theta'}2\geq\sqrt{R_2d_y}.\end{split}\end{equation}
For the case $Y$ and $B$ are on different side of $L_2$, \begin{equation}|y-B|\geq R_2\geq\sqrt{R_2d_y}.\end{equation}  Hence, we finish the proof. \end{proof}

\section{Differentiability of $B_\psi$ and $B_T$ \label{SecDiffBpsi}}
By \eqref{defpsi}, the definition of $\psi$, we  see that $\psi$ is bounded whenever $f\in L^\infty_{x,\zeta}$.  In this section, we shall further prove that the first derivatives of $B_T$ and $B_\psi$  are bounded provided $T$ and $\psi$ are bounded. By the differentiability on the boundary of $\Omega$, we refer to the directional derivatives:
\begin{defi}
Let  $ x, \eta\in \mathbb{R}^3 $ and $D$ be a $C^1$ surface in $\mathbb{R}^3$ and $ f:D\subset\mathbb{R}^3\to \mathbb{R}$. Suppose $\phi:(-\epsilon,\epsilon)\to D$ is a smooth space curve such that
\begin{equation}
\phi(0)=x,\ \ \left.\frac{d}{d t} \phi (t) \right|_{t=0}=\eta.
\end{equation}We define
 \begin{equation}
\nabla^x_{\eta}f(x):=\left.\frac{d}{dt} f( \phi(t))\right|_{t=0}\end{equation}
when the limit at right-hand-side exists.
  \end{defi}
  
Our first result in this section is the following lemma.
\begin{lem} 
Suppose $\Omega$ satisfies the same assumption in the Main Theorem. Suppose $T(x)$ and $\psi(x)$ are bounded. Then, the first derivatives of $B_T(x)$ and $B_\psi(x)$ are bounded.\label{difBpsi}
\end{lem}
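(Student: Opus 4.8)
The plan is to prove the bound for $B_\psi$ in detail; the treatment of $B_T$ is essentially identical since $T(p(x,\zeta))$ plays the role of $\psi(p(x,\zeta))$ and $T$ is even bounded differentiable, so the argument is only easier there. The key is the alternate representation \eqref{bpsi02},
\begin{equation*}
B_\psi(x)= \frac2\pi\int_0^\infty\int_{\partial\Omega}\psi(y)e^{-l^2|x-y|^2}e^{-\frac{\nu(l|x-y|)}{l}}[(x-y)\cdot n(x)]\,|(x-y)\cdot n(y)|\,l^3\,dA(y)\,dl,
\end{equation*}
in which the domain of integration $\partial\Omega$ no longer depends on $x$, and $\psi$ (about which we only know boundedness) is never differentiated. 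First I would verify this representation, or rather take it as given from Section~\ref{SecDiffBpsi}; then the strategy is to differentiate under the integral sign in the direction $\eta$ tangent to the curve $\phi$ at $x$, and show the resulting integral is absolutely convergent with a bound uniform in $x$ and $\eta$ (unit vector). Since $\psi$ is merely bounded, it suffices to bound
\begin{equation*}
\int_0^\infty\int_{\partial\Omega}\Bigl|\nabla^x_\eta\Bigl(e^{-l^2|x-y|^2}e^{-\frac{\nu(l|x-y|)}{l}}[(x-y)\cdot n(x)]\,|(x-y)\cdot n(y)|\,l^3\Bigr)\Bigr|\,dA(y)\,dl\le C.
\end{equation*}

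The derivative falls on four types of factors. Differentiating the Gaussian $e^{-l^2|x-y|^2}$ produces a factor $\lesssim l^2|x-y|$, which after the $dl$ integration (the Gaussian gives $\int_0^\infty l^k e^{-l^2|x-y|^2}\,dl \sim |x-y|^{-k-1}$ together with \eqref{estimatenu} controlling $e^{-\nu(l|x-y|)/l}$) reduces the surface integrand to something like $|x-y|^{-2}\cdot|(x-y)\cdot n(x)|\cdot|(x-y)\cdot n(y)|\cdot|x-y|^{-2}$; by Lemma~\ref{lemmaB}, $|(x-y)\cdot n(x)|$ and $|(x-y)\cdot n(y)|$ are each $\lesssim|x-y|^2$ for $y$ in the geodesic ball $GB(x,r_1)$, so this integrand is bounded and integrates to a constant over $\partial\Omega$ (the region outside $GB(x,r_1)$ is harmless since $|x-y|$ is bounded below there). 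Differentiating $e^{-\nu(l|x-y|)/l}$ gives a factor $\lesssim \nu'(l|x-y|)$, and using $0\le\gamma\le1$ together with \eqref{estimatenu} this contributes at most a bounded multiple after integration in $l$; differentiating $[(x-y)\cdot n(x)]$ gives either $n(x)\cdot\eta$ (bounded) or, when $\eta$ hits $n(x)$, a term involving the second fundamental form which is bounded by $C|x-y|$ by the $C^2$ regularity, in either case the surviving factor $|(x-y)\cdot n(y)|\lesssim|x-y|^2$ from Lemma~\ref{lemmaB} saves integrability; and the factor $|(x-y)\cdot n(y)|$ depends on $x$ only through $x$, so $\nabla^x_\eta|(x-y)\cdot n(y)| = \mathrm{sgn}\cdot(n(y)\cdot\eta)$ is bounded, and the remaining $|(x-y)\cdot n(x)|\lesssim|x-y|^2$ again rescues the estimate. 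In each case one is left, after the $l$-integration, with a surface integrand of the form $|x-y|^{-4}$ times a product of two "normal-gap" factors each $O(|x-y|^2)$, which is exactly bounded; this is the "seemingly critical singularity" the remark after Lemma~\ref{lemmaB} alludes to.

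The main obstacle, and the point requiring the most care, is precisely this borderline integrability: a naive count gives a surface integrand decaying only like $|x-y|^{-4}\cdot|x-y|^{2}\cdot|x-y|^{2}=O(1)$, i.e. no margin to spare, so one cannot afford to lose even a power of $|x-y|$, and the two geometric cancellations of Lemma~\ref{lemmaB} (factors $|(x-y)\cdot n(x)|$ \emph{and} $|(x-y)\cdot n(y)|$ both quadratically small) must both be exploited simultaneously and cleanly. A secondary technical issue is justifying differentiation under the integral sign: one should first differentiate over the truncated region $\{y : |x-y|\ge\delta\}$, where everything is smooth, obtain the uniform bound there, and pass to the limit $\delta\to0$ using the dominated convergence theorem with the $O(1)$ surface bound just established — and one must check the integrand is genuinely $x$-differentiable, which follows from $\Omega$ being $C^2$ so that $n(x)$ is $C^1$ along $\partial\Omega$ and $|x-y|$ is smooth away from $y=x$. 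Finally, for $B_T$ one repeats verbatim with $\psi$ replaced by $T$; the argument goes through unchanged because only boundedness of the coefficient was used.
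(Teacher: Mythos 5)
Your overall strategy matches the paper's exactly: start from the rewritten formula \eqref{bpsi2} in which the domain is fixed and $\psi$ is not differentiated, split into a near-$x$ piece and a far piece, and use the geometric cancellation of Lemma~\ref{lemmaB} on the near piece after integrating out $l$. However, two of your power counts are wrong, and one of them is a genuine gap in the argument.

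\textbf{The integrand is not $O(1)$ after the $l$-integration.} You assert that after integrating in $l$ the surface integrand is ``of the form $|x-y|^{-4}$ times a product of two normal-gap factors each $O(|x-y|^2)$, which is exactly bounded.'' That is the count for the \emph{undifferentiated} integrand. When you differentiate, the worst term (the Gaussian factor) produces an extra $l^2|(x-y)\cdot v|\lesssim l^2|x-y|$, so the $l$-power rises to $5$ while only one extra power of $|x-y|$ is gained. Since $\int_0^\infty l^5 e^{-l^2|x-y|^2}\,dl\sim |x-y|^{-6}$, the resulting surface integrand is $\sim |x-y|^5\cdot|x-y|^{-6}=|x-y|^{-1}$, not $O(1)$. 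Similarly, when the derivative hits a gap factor you lose a power of $|x-y|$ without changing the $l$-power, giving again $|x-y|^{-1}$. This is still integrable because in geodesic polar coordinates the area element contributes $r\,dr$, so $\int_0^{r_1}\frac1r\,r\,dr<\infty$; the conclusion survives, but not for the reason you give, and your dominating function for the dominated-convergence step should be $C/|x-y|$ rather than a constant.

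\textbf{The $n(x)\cdot\eta$ term cannot be merely ``bounded'' --- it must vanish.} You write that differentiating $(x-y)\cdot n(x)$ gives either $n(x)\cdot\eta$ ``(bounded)'' or a second-fundamental-form term $\lesssim|x-y|$, and that ``in either case the surviving factor $|(x-y)\cdot n(y)|\lesssim|x-y|^2$ saves integrability.'' This is false for the first case: the product $l^3\cdot(\text{bounded})\cdot|(x-y)\cdot n(y)|$ gives, after the $l$-integration $\int l^3e^{-l^2r^2}dl\sim r^{-4}$, a surface integrand $\sim r^{-4}\cdot r^2 = r^{-2}$, and $\int_0^{r_1}r^{-2}\,r\,dr$ diverges logarithmically. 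The argument only works because $\eta\in T_x(\partial\Omega)$ forces $n(x)\cdot\eta=0$ \emph{exactly}; this term does not appear at all. The paper uses this fact explicitly (item (i) in its estimate of $\nabla^x_v B_\psi^l$, and its omission from the near-region count). As written, your reasoning would ``prove'' a bound for a normal derivative of $B_\psi$, which in fact does not exist at this order. You should replace ``bounded'' by ``identically zero since $\eta$ is tangent'' and drop the claim that the gap factor alone rescues that term.

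With these two corrections your proposal reproduces the paper's proof essentially verbatim.
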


Recall that
\begin{align}
\label{bt} B_T(x)&:=2\sqrt{\pi}\int_{\zeta\cdot n > 0}T(p(x,\zeta))(|\zeta|^2-2)M(\zeta)e^{-\nu(|\zeta|)\tau_-{(x,\zeta)}}|\zeta\cdot n|d\zeta, \\
\label{bpsi1}B_\psi(x)&:= 2\sqrt{\pi}\int_{\zeta\cdot n > 0}\psi(p(x,\zeta))M(\zeta)e^{-\nu(|\zeta|)\tau_-{(x,\zeta)}}|\zeta\cdot n|d\zeta.
\end{align}

We shall only present the proof for $B_\psi$ because the proof for $B_T$ is similar.
The following proposition gives a useful alternative formulation of  $B_\psi$.
\begin{prop}   
\begin{equation}
\label{bpsi2}
B_\psi(x)=\frac2\pi\int_0^\infty\int_{\partial\Omega}\psi(y)e^{-l^2|x-y|^2}e^{-\frac{\nu(l|x-y|)}{l}}[(x-y)\cdot n(x)]|(x-y)\cdot n(y)|l^3dA(y)dl.
\end{equation}
\end{prop}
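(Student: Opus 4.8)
The plan is to start from the definition \eqref{bpsi1} of $B_\psi$, perform the change of variables $\zeta \mapsto (l, y)$ where $y = p(x,\zeta) \in \partial\Omega$ is the backward exit point and $l = |\zeta|$ is the speed, and then carefully track the Jacobian. First I would fix $x \in \Omega$ and write $\zeta = l\,\omega$ with $\omega \in S^2$, so that $d\zeta = l^2\,dl\,d\omega$ and $\zeta \cdot n(x) > 0$ means $\omega$ ranges over the hemisphere pointing into the domain from $x$ (more precisely, over the directions $\omega$ for which the backward ray $x - t\omega$ immediately leaves $\Omega$; by strict convexity this is exactly $\omega \cdot n(x) > 0$ after reflecting orientation appropriately). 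The exponent $\nu(|\zeta|)\tau_-(x,\zeta)$ and the factor $|\zeta\cdot n|$ only depend on $l$ and $\omega$, and crucially $\tau_-(x, l\omega) = |x - p(x,\omega)|/l$ because $\tau_-$ scales inversely with speed while $p$ depends only on the direction; this is what produces the combination $\nu(l|x-y|)/l$ in the target formula once we set $|x-y| = $ (geometric exit distance in direction $\omega$).

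The heart of the computation is the change of variables from the unit direction $\omega$ (on the relevant hemisphere) to the exit point $y \in \partial\Omega$. Writing $y = x - \tau_-(x,\omega)\,\omega$, i.e.\ $\omega = (x-y)/|x-y|$, the standard solid-angle--to--surface-area Jacobian gives
\begin{equation}
\label{solidangle}
d\omega = \frac{|(x-y)\cdot n(y)|}{|x-y|^3}\,dA(y).
\end{equation}
Substituting this together with $d\zeta = l^2\,dl\,d\omega$ into \eqref{bpsi1}, using $|\zeta \cdot n(x)| = l\,\omega\cdot n(x) = l\,(x-y)\cdot n(x)/|x-y|$, $\psi(p(x,\zeta)) = \psi(y)$, $e^{-\nu(|\zeta|)\tau_-} = e^{-\nu(l|x-y|)/l}$, and $M(\zeta) = \pi^{-3/2} e^{-l^2|\,\cdot\,|^2/|x-y|^2} = \pi^{-3/2} e^{-l^2}$ — wait, one must be careful here: $M(\zeta) = \pi^{-3/2} e^{-|\zeta|^2} = \pi^{-3/2} e^{-l^2}$, and the Gaussian in the target formula is $e^{-l^2|x-y|^2}$, so the change of variables that is actually being used rescales $l$ so that $l_{\text{new}} = l_{\text{old}}/|x-y|$; I would absorb this rescaling at the end, replacing the speed variable by $l = |\zeta|/|x-y|$, which converts $e^{-|\zeta|^2}$ into $e^{-l^2|x-y|^2}$, turns $l^2\,dl$ into $|x-y|^3 l^2 dl$, and turns $\nu(|\zeta|)/\ (\cdot) $ into $\nu(l|x-y|)/l$. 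Collecting the powers of $|x-y|$: from $l^2 d\zeta$-part we get $|x-y|^3$, from $|\zeta\cdot n(x)|$ we get $|x-y|^{-1}$, from \eqref{solidangle} we get $|x-y|^{-3}$, and the prefactor $2\sqrt{\pi}\cdot \pi^{-3/2} = 2/\pi$; multiplying through leaves exactly $\frac{2}{\pi}\,[(x-y)\cdot n(x)]\,|(x-y)\cdot n(y)|\,l^3$ with the remaining $|x-y|^{-1}$ cancelled — the reader should check the bookkeeping but it lands on \eqref{bpsi2}.

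The main obstacle, and the step deserving the most care, is justifying \eqref{solidangle} globally and confirming that as $\omega$ sweeps the open hemisphere $\{\omega\cdot n(x)>0\}$ the exit point $y$ sweeps all of $\partial\Omega$ exactly once (equivalently, that the map $\omega \mapsto y$ is a bijection from the hemisphere onto $\partial\Omega$ minus a null set). Strict convexity of $\Omega$ with positive Gaussian curvature is exactly what guarantees this: every backward ray from an interior point $x$ hits $\partial\Omega$ in a unique point, and conversely every $y\in\partial\Omega$ is seen from $x$ along the unique direction $(x-y)/|x-y|$, which satisfies $(x-y)\cdot n(y) > 0$ (strict, off a measure-zero set — actually $(x-y)\cdot n(y)>0$ always holds for $y\in\partial\Omega$, $x\in\Omega$ by convexity), so the orientation factor $|(x-y)\cdot n(y)| = (x-y)\cdot n(y)$ never vanishes on $\partial\Omega$ and the change of variables is nonsingular. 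The Jacobian itself is the classical computation: parametrize $\partial\Omega$ near $y$, differentiate $\omega(y) = (x-y)/|x-y|$, and the determinant of the induced map between the tangent plane $T_y\partial\Omega$ and $T_\omega S^2$ comes out to $|(x-y)\cdot n(y)|/|x-y|^3$. I would also note that the $l$-integral is absolutely convergent (Gaussian decay at infinity, and near $l=0$ the integrand is $O(l^3)$), and Fubini applies because $\psi$ is bounded, $|(x-y)\cdot n(x)| \le |x-y|$, $|(x-y)\cdot n(y)| \le |x-y|$ (in fact much smaller, by Lemma~\ref{lemmaB}, though the crude bound suffices here), and $\int_{\partial\Omega}|x-y|^{-2}\,dA(y) < \infty$ by Lemma~\ref{lemmaA} — so all the manipulations above are legitimate, yielding \eqref{bpsi2}.
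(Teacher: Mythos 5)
Your proof is correct and arrives at \eqref{bpsi2} by what is essentially the same change of variables as the paper, written in a decomposed form: the paper makes the single substitution $\zeta = l\bigl(x-\phi_i(\alpha,\beta)\bigr)$ with local charts $\phi_i$ of $\partial\Omega$ and computes the Jacobian $l^2\bigl|(x-\phi_i)\cdot(\partial_\alpha\phi_i\times\partial_\beta\phi_i)\bigr|$ directly, whereas you pass through spherical coordinates $\zeta=\rho\omega$, use the classical solid-angle--to--surface-area Jacobian $d\omega = |(x-y)\cdot n(y)|\,|x-y|^{-3}\,dA(y)$, and then rescale $l=\rho/|x-y|$; these are two presentations of the identical coordinate change, and you correctly identify the key geometric facts (strict convexity gives the bijection between the hemisphere and $\partial\Omega$, and $(x-y)\cdot n(y)>0$ so the Jacobian is nondegenerate) and the integrability needed for Fubini. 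The only cosmetic weakness is that the power-of-$|x-y|$ bookkeeping in your third paragraph is stated a bit loosely ("the reader should check\dots"); if you track it carefully, $\rho^2\,d\rho=|x-y|^3 l^2\,dl$ contributes $|x-y|^{+3}$, $d\omega$ contributes $|x-y|^{-3}$, and the $|x-y|^{+1}$ from $\rho=l|x-y|$ in $|\zeta\cdot n(x)|=\rho\,\omega\cdot n(x)$ exactly cancels the $|x-y|^{-1}$ from $\omega\cdot n(x)=(x-y)\cdot n(x)/|x-y|$, so the net power is zero and the formula closes with prefactor $2\sqrt{\pi}\cdot\pi^{-3/2}=2/\pi$ as claimed.
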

\begin{proof}
The idea of showing the equivalence between \eqref{bpsi1} and \eqref{bpsi2} is to do a change of coordinates.   We first observe that, by the strictly convexity of $\Omega$, for each $\zeta$ in the half space    
$$H= \Big\{ \zeta \in \mathbb R^3 \Big| \zeta \cdot n(x)>0 \Big\},$$
there exists exactly a unique pair $(y, l) \in \partial \Omega \times \mathbb{R}_+$ such that
\begin{equation}
\label{cone1}
\zeta = l(x - y).
\end{equation}
Secondly, since the bounded set $\Omega$ is $C^2$ strictly convex, we can cover $\partial \Omega$ by finitely many local charts, i.e., for $1\leq  i\leq k$, there are 
\begin{equation*}
\left\{
\begin{aligned}
& \text{ simply-connected open set }   D_i\subset \mathbb{R}^2,  \,\, \text{ and }  \\
& C^2  \text{ local diffeomorphism }  \phi_i:D_i\to \partial\Omega
\end{aligned}
\right.
\end{equation*}
such that 
\begin{equation*}
\bigcup_{1\leq i\leq k} \phi_i(D_i)=\partial\Omega.
\end{equation*}
Summing up from the above observations, we can parametrize the half space $H$ by a union of a finite number of cone domains. That is to plug $y = \phi_i(\alpha, \beta)$ into \eqref{cone1}. This gives a coordinate change
$\zeta_i:D_i\times(0,\infty)\to H$ :
\begin{equation}
\zeta_i(\alpha,\beta,l)=l\big(x-\phi_i(\alpha,\beta)\big).
\end{equation}
Denote $H_i:=\zeta_i(D_i\times (0,\infty))$. We see that $H = \bigcup_{i=1}^{k} H_i.$
On the other hand, direct calculation shows that the Jocobian of this coordinate change is given by
\begin{equation}
\Big|\frac{\partial \zeta_i (\alpha,\beta, l)}{\partial (\alpha,\beta, l)}\Big|=l^2\Big|\big(x-\phi_i(\alpha,\beta)\big)\cdot \big(\partial_\alpha\phi_i\times\partial_\beta\phi_i\big)\Big|.
\end{equation}
We readily see that
\begin{equation}
\begin{split}
&2\sqrt{\pi}\int_{H_i} \psi(p(x,\zeta)) M(\zeta)e^{-\nu(|\zeta|)\tau_-{(x,\zeta)}}|\zeta\cdot n|d\zeta\\
=&\frac2\pi\int_0^\infty\int_{D_i}\psi(\phi_i(\alpha,\beta))e^{-l^2|x-\phi_i(\alpha,\beta)|^2}e^{-\frac{1}l\nu(l|x-\phi_i(\alpha,\beta)|)}\\
&\quad\times[(x-\phi_i(\alpha,\beta))\cdot n(x) ]|(x-\phi_i(\alpha,\beta))\cdot[\partial_\alpha\phi_i\times\partial_\beta\phi_i]|l^3d\alpha d\beta\\
=&\frac2\pi\int_0^\infty\int_{\phi_i(D_i)}\psi(y)e^{-l^2|x-y|^2}e^{-\frac{\nu(l|x-y|)}l}[(x-y)\cdot n(x) ]|(x-y)\cdot n(y)|l^3dA(y)dl,
\end{split}
\end{equation}
where $y= \phi(\alpha, \beta)$, $n(y)$ is the outward unit normal of $\partial \Omega$ at $y$ and $A(y)$ is the surface element of $\partial \Omega$ at $y$.

Combining all the pieces and excluding the repetitions, we obtain the desired formula.
\end{proof}

  \begin{defi} For $x, y\in \Omega$ and $\zeta\in \mathbb{R}^3,$ we define  
  
   \begin{align}
   \tau_{-}(x,\zeta)&:=\inf\{t>0|x-t\zeta\notin \Omega \},\\
   p(x,\zeta)&:=x-\tau_-(x,\zeta)\zeta,\\
   d_x&:=\inf \big\{|x-y| \big|y\in\partial \Omega\big\},\\
   d_{x,y}&:=\min \{ d_x, d_y  \},\\
   N(x,\zeta)&:=\frac{|n(p(x,\zeta))\cdot\zeta|}{|\zeta|}.   \end{align}
   \end{defi}

Now, we are ready to prove the Lemma \ref{difBpsi}
\begin{proof}[Proof of Lemma \ref{difBpsi}]
Let  $\alpha(t)$ be a normal geodesic and  $v\in T_x (\partial \Omega)$, $|v|=1$   such that 
\begin{equation}
\begin{split}
&\alpha(0)=x,\\
&\left.\frac{d}{dt}\alpha(t)\right|_{t=0}=v.
\end{split}
\end{equation}
Then
\begin{equation}
\begin{split}
&\nabla^x_{v}B_\psi(x) :=\left.\frac{d}{dt}B_{\psi}(\alpha(t))\right|_{t=0}\\=&\int_0^\infty\int_{\partial\Omega}\psi(y)e^{-l^2|x-y|^2}e^{-\frac{\nu(l|x-y|)}l}l^3\\&\Big[\left(-2l^2v\cdot(x-y)-\nu'(l|x-y|)\frac{v\cdot(x-y)}{|x-y|}\right)
(x-y)\cdot n(x)|(x-y)\cdot n(y)|\\&+v\cdot n(x)|(x-y)\cdot n(y)|+(x-y)\cdot\left.\frac{d}{dt}n(\alpha(t))\right|_{t=0}|(x-y)\cdot n(y)|\\&+(x-y)\cdot n(x) \text{sgn}((x-y)\cdot n(y))(v\cdot n(y))\Big] dA(y)dl.\end{split}
\end{equation}
We note that due to the convexity of $\Omega$, $ \text{sgn}((x-y)\cdot n(y))=-1.$
Let $r_1>0$ be as defined by \eqref{defr} in Proposition \ref{propositionA}.
Since $\Omega$ is a $C^2$, bounded and strictly convex domain, there exists a positive number $r$ such that for each $x \in \partial \Omega$, we have
\begin{equation*}
B_r(x) \cap \partial\Omega \subset GB(x, r_1).
\end{equation*}
 We now break the domain of integration into two parts : $B_r(x) \cap \partial\Omega$ and  $\partial\Omega\setminus B_r(x),$ 
and denote the corresponding integrals as $\nabla^x_vB_\psi^{ s}$ and $\nabla^x_vB_\psi^{ l}(x)$ respectively. First, we estimate  $\nabla^x_vB_\psi^{ l}(x)$. We notice that
\begin{enumerate}
\item[(i)] $v \cdot n(x) =0,$\\
\item[(ii)] $|y-x| \ge r,$ for $y \in \partial \Omega \setminus B_r(x),$\\
\item[(iii)] $\frac{d}{dt} n(\alpha(t))$ is bounded due to smoothness and compactness of $\partial \Omega$,\\
\item[(iv)] $\nu'(l|x-y|) \le C (1+l|x-y|)^{\gamma -1}$, which is uniformly bounded,\\
\item[(v)] as mentioned in the beginning of this section, $\psi$ is bounded since $f \in L^{\infty}_{x, \xi}$ in our context.
\end{enumerate}
Taking (i)-(v) into consideration, we obtain that
\begin{equation}
\label{5first}
\begin{split}
|\nabla^x_vB_\psi^l| &\leq C\int_0^\infty\int_{\partial\Omega\setminus B_r(x)}e^{- r^2 l^2}(l^3+l^5)dA(y)dl\\
&\leq C|\partial \Omega|.
\end{split}
\end{equation}
Secondly, since $B_r(x) \cap \partial\Omega \subset GB(x, r_1),$ we may apply Lemma~\ref{lemmaB} to obtain
\begin{equation}
\label{5second}
\begin{split}|\nabla^x_vB_\psi^s(x)|&\leq C\int_{GB(x,r_1)}\int_0^\infty e^{-l^2|x-y|^2} \\ 
&\quad\quad\quad\big[ l^5|x-y|^5+l^3|x-y|^4+l^3|x-y|^3\big]dldA(y)\\
&\leq C\int_{GB(x,r_1)}\int_0^\infty e^{-z^2}\big[
\frac{z^5}{|x-y|}+z^3+\frac{z^3}{|x-y|}\big]dzdA(y)\\
&\leq C\int_{GB(x,r_1)}(1+\frac1{|x-y|} )dA(y)\\
&\leq C\int_0^{r_1}\int_0^{2\pi}(1+\frac1r)rd\theta dr\leq C. \end{split}
\end{equation}
Notice that here $z=|x-y|l$ and $(r,\theta)$ are polar coordinates for the $T_x(\partial\Omega)$. Combining the estimates \eqref{5first} and \eqref{5second}, the proof of  Lemma \ref{difBpsi} is complete.
\end{proof}

\section{H\"{o}lder continuity of $D_f$}
Recall that in \eqref{defphi3}  we define
\begin{equation}
D_f(x):=2\sqrt{\pi}\int_{\zeta\cdot n > 0}\int_0^{\tau_-(x,\zeta)}e^{-\nu(|\zeta|)s}K(f)(x-\zeta s,\zeta)M^{\frac12}(\zeta)|\zeta\cdot n|dsd\zeta.\end{equation}
In this section,  we shall prove the H\"older continuity of $D_f$.
Let  $\{n(x), e_2, e_3\}$ be an orthonormal basis of $T_x(\partial \Omega)$.  We introduce spherical coordinates so that
\begin{equation}
\zeta=\rho\cos\theta n(x)+\rho \sin \theta \cos\phi e_2+\rho \sin\theta\sin\phi e_3.
\end{equation}
With the further coordinate change : $r=s\rho $, $\hat{\zeta}=\frac{\zeta}{|\zeta|}$, $y=x-r\hat{\zeta}$, we can rewrite $D_f$ as 
\begin{equation}
\begin{split}D_f&=
2{\pi}^{-\frac14}\int_0^\infty\int_0^{\frac{\pi}2}\int_0^{2\pi}\int_0^{|\overline{p(x,\zeta)x}|}e^{-\frac{\nu(\rho)}{\rho}r}K(f)(x-r\hat{\zeta} ,\zeta)e^{-\frac{|\zeta|^2}2}\\
& \qquad \qquad \qquad \qquad \qquad \qquad \times |\zeta\cdot n(x)|\rho\sin\theta dr d\phi d\theta d\rho\\
&=2{\pi}^{-\frac14}\int_0^\infty\int_{\Omega}
e^{-\frac{\nu(\rho)}{\rho}|x-y|}K(f)(y ,\rho\frac{(x-y)}{|x-y|})\\
& \qquad \qquad \qquad \qquad \times\frac{(x-y)\cdot n(x)}{|x-y|}e^{-\frac{\rho^2}2}\frac{\rho^2}{|x-y|^2}dyd\rho.
\end{split}\end{equation}

\begin{lem}
\label{DFDO}
Suppose $x_0$ and $x_1$ are any two points on $\partial \Omega$.  We have  
\begin{equation}
|D_f(x_0)-D_f(x_1)|\leq  C\Vert f\Vert_{L^\infty_{x,\zeta}}|x_0-x_1|\left(1+\big|\ln|x_0-x_1|\big|\right).
\end{equation}
\end{lem}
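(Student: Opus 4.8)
\textbf{Proof proposal for Lemma~\ref{DFDO}.}

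The plan is to estimate the difference $D_f(x_0)-D_f(x_1)$ by splitting the $\rho$-integral and the $\Omega$-integral into a ``near'' region, where we do not try to differentiate but merely bound each term crudely, and a ``far'' region, where the integrand is smooth enough in $x$ that the mean value theorem applies. Write $\delta=|x_0-x_1|$ and set, for $j=0,1$,
\begin{equation*}
D_f(x_j)=2\pi^{-\frac14}\int_0^\infty\int_\Omega e^{-\frac{\nu(\rho)}{\rho}|x_j-y|}K(f)\!\left(y,\rho\frac{x_j-y}{|x_j-y|}\right)\frac{(x_j-y)\cdot n(x_j)}{|x_j-y|}e^{-\frac{\rho^2}{2}}\frac{\rho^2}{|x_j-y|^2}\,dy\,d\rho.
\end{equation*}
First I would record the a priori bounds that make the integrals converge: $\|K(f)\|_{L^\infty}\le C\|f\|_{L^\infty}$ (from the estimate on $k$ and Proposition~\ref{cafdecay} via Schur), $\|\partial_{\zeta_i}K(f)\|_{L^\infty}\le C\|f\|_{L^\infty}$ (the smoothing estimate stated just before Section~\ref{prelim}), the bound $|(x_j-y)\cdot n(x_j)|\le C|x_j-y|^2$ valid when $y$ is close to $x_j$ on $\partial\Omega$ (Lemma~\ref{lemmaB}), and the key integrability input $\int_{\partial\Omega}|x-y|^{-2}\,dA(y)\le C(1+|\ln d_x|)$ from Lemma~\ref{lemmaA}, together with its companion $\int_\Omega |x-y|^{-2}\,dy\le C$ for a bounded domain. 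The Gaussian factors $e^{-\rho^2/2}$ and $e^{-\nu(\rho)|x-y|/\rho}$ kill the $\rho$-integral and, crucially for points near $\partial\Omega$, supply an extra $|x-y|$ of decay: $e^{-\nu(\rho)|x-y|/\rho}\le e^{-\nu_0|x-y|/\rho}$ and $\int_0^\infty \rho\, e^{-\nu_0 |x-y|/\rho}e^{-\rho^2/2}\,d\rho\le C$, while pairing one power of $\rho$ against the exponential in the form $\rho\,e^{-\nu_0|x-y|/\rho}\le C|x-y|^{-1}\cdot(\text{bounded})$ is the mechanism that trades a $|x-y|^{-3}$ apparent singularity for a $|x-y|^{-2}$ integrable one. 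Since $x_0,x_1\in\partial\Omega$ here, note $d_{x_j}=0$, so the logarithm in Lemma~\ref{lemmaA} must be produced instead by the scale $\delta$: on the piece of $\partial\Omega$ at distance $\gtrsim\delta$ from $x_0$ we will get $\int |x_0-y|^{-2}$ truncated to give a factor $|\ln\delta|$.

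Now for the decomposition. Fix a small $r_1$ as in Lemma~\ref{lemmaA}/\ref{lemmaB}. Write $\Omega=\Omega_{\mathrm{near}}\cup\Omega_{\mathrm{far}}$ where $\Omega_{\mathrm{near}}=\{y:|x_0-y|\le 2\delta\}$ (assuming $\delta$ small; otherwise the lemma is trivial from the $L^\infty$ bound on $D_f$). On $\Omega_{\mathrm{near}}$ one has also $|x_1-y|\le 3\delta$, and I would bound $|D_f(x_0)|_{\Omega_{\mathrm{near}}}$ and $|D_f(x_1)|_{\Omega_{\mathrm{near}}}$ separately. After performing the $\rho$-integral as above, the $y$-integrand is $\le C\|f\|_\infty |x_j-y|^{-2}\cdot|(x_j-y)\cdot n(x_j)|/|x_j-y|$; using Lemma~\ref{lemmaB} to write $|(x_j-y)\cdot n(x_j)|\lesssim |x_j-y|^2$ — here one must be a little careful because $y$ ranges over $\Omega$, not $\partial\Omega$, but slicing $\Omega$ by the foliation that appeared in the derivation of the formula, i.e.\ going back to the $(r,\theta,\phi)$ variables, the normal component is controlled the same way — the integrand becomes $\le C\|f\|_\infty |x_j-y|^{-1}$, whose integral over a ball of radius $3\delta$ in $\mathbb R^3$ is $O(\delta^2)$. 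So the near contribution is $O(\|f\|_\infty\,\delta^2)$, comfortably within the claimed bound.

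On $\Omega_{\mathrm{far}}=\{|x_0-y|>2\delta\}$ the segment from $x_0$ to $x_1$ stays at distance $\ge\delta$ from $y$, so the integrand $F_j(y,\rho)$ is $C^1$ in $x_j$ along that segment and I apply the mean value theorem: $|F_0-F_1|\le \delta\sup |\nabla_x F|$. Differentiating in $x$ produces four types of terms — one from $\partial_x e^{-\nu(\rho)|x-y|/\rho}$, one from $\partial_x$ acting on the direction $\rho(x-y)/|x-y|$ inside $K(f)$ (this is where $\|\partial_\zeta K(f)\|_\infty$ enters, and the chain rule gives a factor $\rho/|x-y|$), one from $\partial_x[(x-y)\cdot n(x)]$, and one from $\partial_x[|x-y|^{-3}]$ which is the worst, of size $|x-y|^{-4}$. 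Each is paired with the Gaussian/exponential in $\rho$; carrying out the $\rho$-integral, the dangerous term is controlled by $\int_{\Omega_{\mathrm{far}}}|x_0-y|^{-2}\,dy\le C$ after the $\rho$-integration eats two powers of $|x-y|$ via the mechanism above — wait, more precisely the worst surviving $y$-power is $|x-y|^{-3}$ integrated over $\{|x-y|>2\delta\}\cap\Omega$, which is $O(|\ln\delta|)$ in dimension $3$... but actually since $\Omega$ is three-dimensional, $\int_{|x_0-y|>2\delta}|x_0-y|^{-3}dy = O(|\ln\delta|)$, not convergent, and it is exactly this that yields the $|\ln|x_0-x_1||$ in the statement. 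Multiplying by the prefactor $\delta$ from the mean value theorem gives the claimed $C\|f\|_\infty\,\delta(1+|\ln\delta|)$. \textbf{The main obstacle} is the bookkeeping in this last step: one must check that after differentiating and integrating in $\rho$ no term is worse than $|x-y|^{-3}$ over $\Omega$ (equivalently no term worse than $|x-y|^{-2}$ over $\partial\Omega$ once a slice is taken), which forces a careful use of the two Gaussian exponentials — the $e^{-\rho^2/2}$ to handle large $\rho$ and the $e^{-\nu(\rho)|x-y|/\rho}$ to handle small $\rho$ and to convert extra powers of $\rho$ into negative powers of $|x-y|$ — and a careful use of Lemma~\ref{lemmaB} to gain the two powers of $|x-y|$ from the normal component $(x-y)\cdot n(x)$, without which the estimate fails. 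One also has to verify that the straight segment from $x_0$ to $x_1$ may be used as the path of differentiation; since both endpoints lie on $\partial\Omega$ and we only need the gradient of the (smooth, explicitly written) integrand evaluated at points of $\Omega$ near that chord, this is immediate, but the chord must be reparametrized consistently with the $(r,\theta,\phi)$-foliation used to control the normal component.
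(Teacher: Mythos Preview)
Your overall strategy --- split $\Omega$ into a $2\delta$-ball and its complement, bound the near piece crudely, and apply the mean value theorem on the far piece to produce $\int_{\{|x-y|>2\delta\}\cap\Omega}|x-y|^{-3}\,dy=O(|\ln\delta|)$ --- is correct and is essentially what the paper does (the paper additionally separates the $K(f)$-variation from the kernel variation, but that is cosmetic). However, your execution contains a genuine error: you repeatedly invoke Lemma~\ref{lemmaB} to claim $|(x-y)\cdot n(x)|\lesssim|x-y|^2$, and you assert that without this gain ``the estimate fails''. That inequality holds only when \emph{both} points lie on $\partial\Omega$; here $y$ ranges over the interior of $\Omega$, and for $y=x-t\,n(x)$ one has $(x-y)\cdot n(x)=t=|x-y|$, so no extra power is available. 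Fortunately the estimate does not need it: with the trivial bound $|(x-y)\cdot n(x)|\le|x-y|$ the near piece is $\int_{B(x,3\delta)}|x-y|^{-2}\,dy=O(\delta)$ (not your claimed $O(\delta^2)$, but sufficient), and on the far piece the $x$-derivative of the integrand is at worst $C\,e^{-\rho^2/2}(1+\rho^3)\,|x-y|^{-3}$, which after the $\rho$-integral leaves exactly the logarithmic $y$-integral. Neither Lemma~\ref{lemmaA} nor Lemma~\ref{lemmaB} enters the paper's proof of this lemma.

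A smaller but real gap: the straight chord from $x_0$ to $x_1$ leaves $\partial\Omega$, yet the integrand contains $n(x)$, which is undefined off the boundary; your remark that one ``only needs the gradient at points near the chord'' does not address this. The paper sidesteps the issue by taking $x(t)$ to be a normal geodesic on $\partial\Omega$ and applying the fundamental theorem of calculus along it, so that $n(x(t))$ is well-defined throughout (with the bonus that $n(x(t))\cdot x'(t)=0$, killing one term). If you insist on the chord you must first extend $n$ smoothly to a tubular neighbourhood and check that the extended integrand still obeys the same derivative bounds.
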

\begin{proof}\begin{equation}\begin{split}
&|D_f(x_0)-D_f(x_1)|\leq \bigg|2{\pi}^{-\frac14}\int_0^\infty\int_{\Omega}\left[K(f)(y,\rho\frac{(x_0-y)}{|x_0-y|})-[K(f)(y,\rho\frac{(x_1-y)}{|x_1-y|})\right]\\
&\quad\quad \cdot
e^{-\frac{\nu(\rho)}{\rho}|x_0-y|-\frac{\rho^2}2}\rho^2\frac{n(x_0)\cdot(x_0-y)}{|x_0-y|^{3}}dyd\rho\bigg|\\
&\quad+\Bigg|2{\pi}^{-\frac14}\int_0^\infty\int_{\Omega}[K(f)(y,\rho\frac{(x_1-y)}{|x_1-y|})\\
&\quad\quad\cdot\left[
e^{-\frac{\nu(\rho)}{\rho}|x_0-y|-\frac{\rho^2}2}\rho^2\frac{n(x_0)\cdot(x_0-y)}{|x_0-y|^{3}}-e^{-\frac{\nu(\rho)}{\rho}|x_1-y|-\frac{\rho^2}2}\rho^2\frac{n(x_1)\cdot(x_1-y)}{|x_1-y|^{3}}\right]dyd\rho\Bigg|\\
&=:\Delta D_{fK}+\Delta D_{fO}.
\end{split}\end{equation} We first estimate $\Delta D_{fK}.$
We  break the domain of integration into two parts,  $\Omega_1=\Omega\cap B(x_0,2|x_0-x_1|)$ and $\Omega_2:= \Omega\setminus B(x_0,2|x_0-x_1|)$, and denote the corresponding integrals as $\Delta D_{fK}^1$ and $\Delta D_{fK}^2$ respectively. Because of smallness of the domain of integration, by \eqref{defK} and \eqref{estimateK}, one may readily derive that
\begin{equation}
|\Delta D_{fK}^1|\leq C\Vert f\Vert_{L^\infty_{x,\zeta}}|x_0-x_1|.
 \end{equation}
 To estimate $D_{fK}^2$, by employing the Lipschitz continuity of $K(f)$ :
 \begin{equation}
 |K(f)(y,\zeta_1)-K(f)(y,\zeta_2)|\leq C\Vert f \Vert_{L^\infty_{x,\zeta}}|\zeta_1-\zeta_2|, 
 \end{equation}  
 we get 
 \begin{equation}\label{differenceKxy}\begin{split}
& \left|K(f)(y, \rho\frac{(x_0-y)}{|x_0-y|})-K(f)(y, \rho\frac{(x_1-y)}{|x_1-y|})\right|\\
&\leq  C\Vert f \Vert_{L^\infty_{x,\zeta}}\left|\frac{\rho(x_0-y)}{|x_0-y|}-\frac{\rho(x_1-y)}{|x_1-y|}\right|\\
&\leq   C\rho\Vert f \Vert_{L^\infty_{x,\zeta}}\left|\frac{|x_1-y|(x_0-y)-|x_0-y|(x_1-y)}{|x_0-y||x_1-y|}\right|
\\  &\leq C\rho\Vert f \Vert_{L^\infty_{x,\zeta}}\left|\frac{|x_1-y|(x_0-x_1)+(|x_1-y|-|x_0-y|)(x_1-y)}{|x_0-y||x_1-y|}\right|
\\  &\leq C\frac{\rho|x_0-x_1|}{|x_0-y|}\Vert f \Vert_{L^\infty_{x,\zeta}}.\end{split}\end{equation}
Therefore,
\begin{equation}
\begin{split}
&|\Delta D_{fK}^2|\leq C{|x_0-x_1|}\Vert f \Vert_{L^\infty_{x,\zeta}}\int_0^\infty\int_{\Omega_2}e^{-\frac{\nu(\rho)}{\rho}|x_0-y|-\frac{\rho^2}2}\frac{\rho^3}{|x_0-y|^{3}}dyd\rho\\
&\leq C{|x_0-x_1|}\Vert f \Vert_{L^\infty_{x,\zeta}}\int_{0}^{\infty}e^{-\frac{\rho^2}{2}} \rho^3 d \rho \int_{2|x_0-x_1|}^R\frac1rdr\\
& \leq C{|x_0-x_1|}(1+\left| \ln \left| x_0-x_1 \right| \right|)\Vert f \Vert_{L^\infty_{x,\zeta}}.
\end{split}
\end{equation}

Now, we proceed to estimate $\Delta D_{fO}$.  Suppose $0<r<r_1$  as chosen in the proof of Lemma~\ref{difBpsi} so that for every $x \in \partial \Omega$, we have
\begin{equation*}
B_r(x) \cap \partial\Omega \subset GB(x, r_1).
\end{equation*}
{\sc Case 1.} If  $2|x_0-x_1| \ge r,$ we see that
\begin{equation}
\frac{\Delta D_{fO}}{|x_0-x_1|} \le \frac{C\Vert f\Vert_{L^\infty_{x,\zeta}}}{r}.
\end{equation}
{\sc Case 2.} In case of $2|x_0-x_1| < r,$ we split the domain of integration into two parts:  $\Omega_1=\Omega\cap B(x_0,2|x_0-x_1|)$ and $\Omega_2:= \Omega\setminus B(x_0,2|x_0-x_1|)$, and denote the corresponding integrals as $\Delta D_{fO}^1$ and $\Delta D_{fO}^2$ respectively.  Due to smallness of domain of integration, we have
\begin{equation}
|\Delta D_{fO}^1|\leq C\Vert f\Vert_{L^\infty_{x,\zeta}}|x_0-x_1|.
 \end{equation}
 To estimate $\Delta D_{fO}^2$, we let  $x(t)\subset GB(x_0, r_1) \subset \partial \Omega$ be the normal geodesic connecting $x_0$ and $x_1$ with 
 \begin{equation}
 x(0)=x_0,\,\ x(s)=x_1,
 \end{equation}
 where $s$ is the geodesic distance between $x_0$ and $x_1$ on $\partial \Omega.$
 We observe that 
 \begin{align*}
 \frac{d}{dt}& \Big( e^{-\frac{\nu(\rho)}{\rho}|x(t)-y|}\frac{n(x(t))\cdot(x(t)-y)}{|x(t)-y|^{3}} 
 \Big)\\ 
 &= -\frac{\nu(\rho)}{\rho} \frac{x'(t) \cdot (x(t)-y) }{|x(t)-y|}  e^{-\frac{\nu(\rho)}{\rho}|x(t)-y|}\frac{n(x(t))\cdot(x(t)-y)}{|x(t)-y|^{3}} \\
 & \Big(\frac{(\frac{d}{dt} n(x(t))) \cdot (x(t)-y)}{|x(t)-y|^3} -3\frac{\big( n(x(t)) \cdot (x(t)-y) \big) \big( x'(t) \cdot (x(t)-y) \big) }{|x(t)-y|^5} \Big) \\
 & \times e^{-\frac{\nu(\rho)}{\rho}|x(t)-y|}. 
 \end{align*}
 Hence,
 \begin{equation*}
 \Big| \frac{d}{dt} \Big( e^{-\frac{\nu(\rho)}{\rho}|x(t)-y|}\frac{n(x(t))\cdot(x(t)-y)}{|x(t)-y|^{3}} \Big) \Big| \le  e^{-\frac{\nu(\rho)}{\rho}|x(t)-y|}\Big(   \frac{1}{|x(t) -y|^3}
 + (1 +\frac{\nu(\rho)}{\rho} )  \frac{1}{|x(t) -y|^2} \Big).
  \end{equation*}
 By the fundamental theorem of calculus, we derive that
 \begin{equation}\begin{split}
&\Delta D_{fO}^2=\\&\Bigg|2{\pi}^{-\frac14}\int_0^\infty\int_{\Omega_2}K(f)(y,\frac{x_1-y}{|x_1-y|}\rho)\int_0^s\frac{d}{dt}\left(e^{-\frac{\nu(\rho)}{\rho}|{x}(t)-y|-\frac{\rho^2}2}\rho^2\frac{n({x}(t))\cdot({x}(t)-y)}{|{x}(t)-y|^{3}}\right)dtdyd\rho\Bigg|\\
&\leq C\Vert f\Vert_{L^\infty_{x,\zeta}}\Bigg|\int_0^s\int_0^\infty\rho(1+\rho)e^{-\frac{\rho^2}2}\int_{\Omega_2}  (\frac1{|{x}(t)-y|^3} +\frac1{|{x}(t)-y|^2} )dyd\rho dt\Bigg|\\
&\leq  C\Vert f\Vert_{L^\infty_{x,\zeta}}\Bigg|\int_0^s\int_0^\infty\rho(1+\rho)e^{-\frac{\rho^2}2}\int_0^{2\pi}\int_0^{\pi}\int_{|x_0-x_1|}^R(1+\frac1{r})\sin\theta dr d\theta d\phi d\rho dt\Bigg|\\
&\leq  C\Vert f\Vert_{L^\infty_{x,\zeta}} |x_0 - x_1| \left(1+\big|\ln|x_0-x_1|\big|\right),\end{split}\end{equation}
where we have used the  inequality   
$$
s \le \frac{8}{7}|x_0 - x_1|
$$ 
addressed in  Remark~\ref{rem32}. Combining the above estimates of  $\Delta D_{fK}^1$,  $\Delta D_{fK}^2$, $\Delta D_{fO}^1$ and $\Delta D_{fO}^2$, the proof of
Lemma~\ref{DFDO} is complete.
 \end{proof}

\section{{ H\"{o}lder type estimates revisited}\label{HolderRV}}

 From the previous sections, we in fact can already claim interior H\"{o}lder continuity by an argument similar to \cite{RegularChen}. In order to further bootstrap the regularity to differentiability, those estimates  need to be significantly refined.
In this section, we are going to  prepare some estimates for $I$ and $II$  defined in  \eqref{def123}. In the next section, Section \ref{Mixtureenhanced}, we will further improve the Mixture Lemma for stationary solution introduced in  \cite{RegularChen}.

We define\begin{defi}\begin{align}
N(x,\zeta)&:=\frac{|n(p(x,\zeta))\cdot\zeta|}{|\zeta|},\\
d_{x,y}&:= \min\{d_x, d_y\}.
\end{align}
\end{defi}

Our goal in this section is to prove the following lemma.

\begin{lem} \label{HolderIandII}
 Assume that $\Omega$ satisfies the geometric assumptions introduced in Lemma~\ref{lemmaA}. Suppose that  for fixed $0<\epsilon<\frac16$, there exist $a,\  M_0>0$ such that
  \begin{align}
  &|f(X,\zeta)-f(Y,\zeta)|\leq M_0|X-Y|^{1-\epsilon}e^{-a|\zeta|^2},\\
  &|f(X,\zeta)|\leq M_0e^{-a|\zeta|^2},
  \end{align} 
  for all $(X,\zeta),\ (Y,\zeta)\in\Gamma_-$.
  Then, there exists a constant $C$ such that for any $x,\ y \in\Omega$,
  \begin{align}
  &|I(x,\zeta)-I(y,\zeta)|\leq C\frac1{d_{x,y}} |x-y|^{1-\epsilon}e^{-\frac{a}2|\zeta|^2},\label{Iinfest}\\
  & \label{IwithN} |I(x,\zeta)-I(y,\zeta)|\leq C \left(\frac{|x-y|^{1-\epsilon}}{N(x,\zeta)}+\frac{|x-y|}{N(x,\zeta)|\zeta|} \right.\\
 & \nonumber \qquad  \qquad \qquad \qquad \qquad \,\,\, \left. +\frac{|x-y|^{1-\epsilon}}{N(y,\zeta)}+\frac{|x-y|}{N(y,\zeta)|\zeta|} \right)e^{-a|\zeta|^2}
  \\
  &\label{kdIest}\int_{\mathbb{R}^3}|k(\zeta,\zeta')||I(x) -I(y)|d\zeta'\leq \\
&\nonumber  \qquad \qquad \qquad \qquad C(1+d_{x, y}^{-1})^{\frac13}(\big|\ln d_{x,y} \big|+1)|x-y|^{1-\epsilon}.
\end{align}
\end{lem}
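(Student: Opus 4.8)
\textbf{Proof strategy for Lemma~\ref{HolderIandII}.} The term $I(x,\zeta)=f(p(x,\zeta),\zeta)e^{-\nu(|\zeta|)\tau_-(x,\zeta)}$ is a product of three pieces: the boundary data $f(p(x,\zeta),\zeta)$, the exponential damping $e^{-\nu(|\zeta|)\tau_-(x,\zeta)}$, and implicitly the geometric maps $p(\cdot,\zeta)$ and $\tau_-(\cdot,\zeta)$. The plan is to estimate the variation of each factor as $x$ moves to $y$ along the segment (or via the straight line $x-t\zeta$), bound the increments of $p$ and $\tau_-$ by the geometry of $\partial\Omega$, and then combine using the hypothesized H\"older bound and boundedness of $f$ on $\Gamma_-$. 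For \eqref{Iinfest}: I would write $I(x,\zeta)-I(y,\zeta)$ as a telescoping sum, use $|f(p(x,\zeta),\zeta)-f(p(y,\zeta),\zeta)|\le M_0|p(x,\zeta)-p(y,\zeta)|^{1-\epsilon}e^{-a|\zeta|^2}$, together with the elementary bounds $|\tau_-(x,\zeta)-\tau_-(y,\zeta)|\le |x-y|/(|\zeta|N)$-type estimates and $|p(x,\zeta)-p(y,\zeta)|\le C|x-y|/N$ (with $N=N(x,\zeta)$ or $N(y,\zeta)$), and finally absorb the factor $\nu(|\zeta|)^{1-\epsilon}e^{-\nu(|\zeta|)\tau_-}$ (which is bounded) into constants; the $\frac{1}{d_{x,y}}$ comes from the worst-case lower bound $N(x,\zeta)\,|\overline{p(x,\zeta)x}|\gtrsim d_x$ relating the incidence angle to the distance to the boundary, an estimate that should already be available from the geometry section or from \cite{RegularChen}. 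Estimate \eqref{IwithN} is the same computation but keeping the $N(x,\zeta)$ and $N(y,\zeta)$ factors explicit rather than bounding them below by $d_{x,y}$.

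The substantive part is \eqref{kdIest}: integrating \eqref{IwithN} against $|k(\zeta,\zeta')|$ in $\zeta'$. Here $\zeta'$ is the variable, $\zeta$ plays no role, and the right-hand side of \eqref{IwithN} has the singular factors $1/N(x,\zeta')$ and $1/N(y,\zeta')$. The key point is that $N(x,\zeta')=|n(p(x,\zeta'))\cdot\hat\zeta'|$ is small precisely for grazing directions, and one must show $\int_{\mathbb{R}^3}|k(\zeta,\zeta')|\,N(x,\zeta')^{-1}d\zeta'$ is controlled. I would switch $\zeta'$ to spherical coordinates $(\rho,\omega)$ with $\omega=\hat\zeta'\in S^2$, use the decay of $k$ in $\rho$ (from \eqref{estimateK}) to reduce to an angular integral $\int_{S^2}\frac{d\omega}{N(x,\rho\omega)}$, and then change variables on the sphere to the footpoint $y=p(x,\rho\omega)\in\partial\Omega$. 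Under this change the Jacobian is $|n(y)\cdot(x-y)|/|x-y|^3 = N\cdot|x-y|^{-2}$, so $\frac{1}{N}\,d\omega$ becomes $\frac{1}{|x-y|^2}\,dA(y)$ — exactly the quantity controlled by Lemma~\ref{lemmaA}, giving the $|\ln d_x|+1$ factor. For the terms with an extra $1/|\zeta'|=1/\rho$, the $\rho$-integral $\int \rho^{-1}\cdot(\text{decay})\,\rho^2\,d\rho$ is still convergent at $\rho=0$ because of the extra $\rho^2$ from the volume element, so those contribute the same way. The leftover factor $(1+d_{x,y}^{-1})^{1/3}$ (as opposed to the full $d_{x,y}^{-1}$ one might fear) should come from interpolating: split the angular domain into near-grazing and non-grazing pieces at a threshold $N\sim\delta$, bound the non-grazing part cleanly, and on the near-grazing part trade a power of the (bounded) H\"older increment against a power of $d_{x,y}^{-1}$, optimizing over $\delta$ — this is the ``significant refinement'' the authors allude to, and it is where $\epsilon<\frac16$ enters.

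The main obstacle I anticipate is making the sphere-to-boundary change of variables uniform and keeping track of the two basepoints $x$ and $y$ simultaneously: the footpoint maps $p(x,\cdot)$ and $p(y,\cdot)$ send the \emph{same} direction $\omega$ to \emph{different} boundary points, so the bound must be organized so that each singular term is handled by its own change of variables (one centered at $x$, one at $y$), and then Lemma~\ref{lemmaA} applied at $x$ and at $y$ separately, yielding $|\ln d_x|+1$ and $|\ln d_y|+1$, both dominated by $|\ln d_{x,y}|+1$. A secondary technical point is justifying that $N(x,\zeta')\gtrsim d_x/|\overline{p(x,\zeta')x}|\gtrsim d_x/\mathrm{diam}(\Omega)$ so that the $1/N$ singularity is genuinely integrable rather than merely improper, and that the straight-line interpolation $x\mapsto y$ used for \eqref{Iinfest} stays inside $\Omega$ (or one argues along $x-t\zeta$ and uses convexity). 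Once the angular integral is tamed, plugging in the hypothesized H\"older modulus $M_0$ and collecting the powers of $|x-y|$ and $d_{x,y}$ finishes the proof; I would state the three estimates in the order \eqref{Iinfest}, \eqref{IwithN}, \eqref{kdIest} since the third uses the second.
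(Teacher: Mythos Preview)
Your treatment of \eqref{Iinfest} and \eqref{IwithN} is correct and matches the paper: telescope, use $|p(x,\zeta)-p(y,\zeta)|\le C|x-y|/N$ and $\big||x-X|-|y-Y|\big|\le C|x-y|/N$ from Proposition~\ref{Propdifference}, and for \eqref{Iinfest} absorb $N^{-1}$ via $e^{-\nu\tau_-}\le e^{-\nu_0 d_x/(N|\zeta|)}$ together with \eqref{prop13}. The boundary change of variables $\zeta'\mapsto(l,z)\in(0,\infty)\times\partial\Omega$ and the appeal to Lemma~\ref{lemmaA} for $\int_{\partial\Omega}|x-z|^{-2}dA(z)$ are also exactly right.

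The gap is in your mechanism for the factor $(1+d_{x,y}^{-1})^{1/3}$ in \eqref{kdIest}. You propose to split the $\zeta'$-integral at a grazing threshold $N(x,\zeta')\sim\delta$ and optimize. This does not work, because the kernel singularity $|k(\zeta,\zeta')|\sim|\zeta-\zeta'|^{-1}$ is located at $\zeta'=\zeta$ and has nothing to do with where $N(x,\zeta')$ is small; on the set $\{N\le\delta\}$ the integral $\int|k|\,d\zeta'$ need not be small (if $\zeta$ itself is near-grazing it is $O(1)$), so the optimization does not close. The paper instead splits at $|\zeta-\zeta'|=d_{x,y}^{1/3}$: on $B_0=\{|\zeta-\zeta'|<d_{x,y}^{1/3}\}$ use the crude bound \eqref{Iinfest} (cost $d_{x,y}^{-1}$) together with $\int_{B_0}|k|\,d\zeta'\le C\int_0^{d_{x,y}^{1/3}}r\,dr=O(d_{x,y}^{2/3})$, net $d_{x,y}^{-1/3}$; on $B_0^c$ bound $|k|\le C|\zeta-\zeta'|^{-1}\le C d_{x,y}^{-1/3}$ pointwise, pull this constant out, and \emph{then} use \eqref{IwithN}, the change of variables $\zeta'=l(x-z)$, and Lemma~\ref{lemmaA}. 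The exponent $\tfrac13$ is thus tied to the $|\zeta-\zeta'|^{-1}$ singularity of $k$, not to any angular cutoff.

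Two minor corrections: the $\rho$-decay you invoke to reduce to an angular integral comes from the factor $e^{-a|\zeta'|^2}$ in \eqref{IwithN}, not from $k$ (which decays in $|\zeta-\zeta'|$, not $|\zeta'|$); and the restriction $\epsilon<\tfrac16$ does not enter in the proof of this lemma at all --- it is needed later, in the near-boundary estimate of Lemma~\ref{HolderBoundary}.
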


We need some observations in geometry  to prove the above lemma. 
\begin{prop}\label{Propdifference}
Let $x$ and $y$ be interior points of $\Omega$. We denote $p(x,\zeta)$ and $p(y,\zeta)$ by $X$ and $Y$ respectively.  Then 
\begin{equation} 
|x-X|\geq \frac{d_x}{N(x,\zeta)} .\label{prop13}
\end{equation}
Further more, if $|x-X|\le |y-Y|,$ then 
\begin{align}
|X-Y|&\leq  \frac1{N(x,\zeta)}|x-y|, \label{prop11}\\
\big||x-X|-|y-Y|\big|&\leq   \frac2{N(x,\zeta)}|x-y|. \label{prop12}
\end{align}
\end{prop}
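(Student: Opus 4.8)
The plan is to derive all three estimates from a single application of convexity, namely the supporting hyperplane $P_X$ of $\Omega$ at $X:=p(x,\zeta)$ (this is the tangent plane, since $\partial\Omega$ is $C^2$) with outward unit normal $n:=n(X)$. Throughout write $a:=\tau_-(x,\zeta)$, $b:=\tau_-(y,\zeta)$, $\hat\zeta:=\zeta/|\zeta|$ and $N:=N(x,\zeta)$, so that $x-X=a\zeta$, $y-Y=b\zeta$, $|x-X|=a|\zeta|$, $|y-Y|=b|\zeta|$, and $\zeta\cdot n=-|\zeta|N$. Note $N>0$, because an interior ray of a strictly convex domain cannot exit tangentially at $X$; hence dividing by $N$ is legitimate.

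To prove \eqref{prop13}, I would use that the closed ball $\overline{B(x,d_x)}$ is contained in $\overline{\Omega}$, hence in the half-space $\{z:(z-X)\cdot n\le 0\}$ bounded by $P_X$; evaluating this inequality at the point $z=x+d_x\,n$ gives $(x-X)\cdot n+d_x\le 0$, i.e. $d_x\le|(x-X)\cdot n|=a|\zeta\cdot n|=|x-X|\,N$, which is \eqref{prop13}.

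For \eqref{prop11} and \eqref{prop12} I would assume, as permitted, $|x-X|\le|y-Y|$ and set $\beta:=|y-Y|-|x-X|=(b-a)|\zeta|\ge 0$. The affine, $1$-Lipschitz function $\sigma(z):=(z-X)\cdot n$ satisfies $\sigma(x)=-|x-X|\,N$, and, since $(Y-X)\cdot n\le 0$ (as $Y\in\overline{\Omega}$), also $\sigma(y)=(y-Y)\cdot n+(Y-X)\cdot n\le-|y-Y|\,N$; therefore $0\le\beta N\le\sigma(x)-\sigma(y)=(x-y)\cdot n\le|x-y|$. The outer inequality already yields \eqref{prop12} (in fact with constant $1$). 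For \eqref{prop11} I would start from the identity $X-Y=(x-y)+\beta\hat\zeta$, which gives $|X-Y|^2=|x-y|^2+2\beta\,(x-y)\cdot\hat\zeta+\beta^2$; it then suffices to show $2\beta\,(x-y)\cdot\hat\zeta+\beta^2\le|x-y|^2(1-N^2)/N^2$. Set $c:=(x-y)\cdot n$, so $c\in[\beta N,|x-y|]$ by the previous display, and decompose $\hat\zeta=-N\,n+\sqrt{1-N^2}\,e$ with $e\perp n$ a unit vector, so that $(x-y)\cdot\hat\zeta\le-Nc+\sqrt{1-N^2}\,\sqrt{|x-y|^2-c^2}$. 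The quantity to be bounded is a convex quadratic in $\beta$ on $[0,c/N]$; it vanishes at $\beta=0$, and at $\beta=c/N$ the displayed bound reduces the claim, after clearing $N^2$, to $2Nc\sqrt{(1-N^2)(|x-y|^2-c^2)}\le(1-N^2)(|x-y|^2-c^2)+N^2c^2$, which is the AM--GM inequality. This gives $|X-Y|\le|x-y|/N$, i.e. \eqref{prop11}.

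I expect the only genuinely delicate point to be producing the sharp constant $1$ in \eqref{prop11}: the crude estimate $|X-Y|\le|x-y|+\beta\le(1+N^{-1})|x-y|$ loses a factor, so one must keep track of the correlation between $\beta$ and the $\hat\zeta$-component of $x-y$, as above. By contrast \eqref{prop13} and \eqref{prop12} fall out immediately from the single supporting-hyperplane inequality, and no curvature information beyond convexity is used anywhere.
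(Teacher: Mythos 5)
Your proof is correct, and it takes a genuinely different route from the paper for \eqref{prop11} and \eqref{prop12}. The paper reduces \eqref{prop11} to plane geometry in the affine plane $E$ through $x,y,X,Y$: after introducing the auxiliary point $y^*=y+(X-x)$ and the point $Y^*$ where the tangent line to $\partial\Omega\cap E$ at $X$ meets $\overrightarrow{yY}$, it applies the law of sines in the resulting triangles together with the bound $\sin\angle XY^*Y=\bigl|n_1/\sqrt{n_1^2+n_2^2}\bigr|\ge N$. It then obtains \eqref{prop12} merely as a corollary of \eqref{prop11} via the triangle inequality $\bigl||x-X|-|y-Y|\bigr|\le|X-Y|+|x-y|$ and $N\le1$, which is exactly where the factor $2$ comes from. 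You instead prove \eqref{prop12} directly and independently, with the sharper constant $1$, from the single supporting-hyperplane inequality $\sigma(x)-\sigma(y)\ge\beta N$; and you prove \eqref{prop11} algebraically by writing $X-Y=(x-y)+\beta\hat\zeta$, decomposing $\hat\zeta$ along and orthogonal to $n(X)$, and maximizing the resulting convex quadratic in $\beta$ over $[0,c/N]$, the endpoint case reducing to AM--GM. Your route avoids the two-dimensional reduction and the construction of $Y^*$ (and hence any discussion of the configuration of intersection points and angles), at the cost of a somewhat longer computation for \eqref{prop11}. For \eqref{prop13} the two arguments are essentially the same---both measure the distance from $x$ to the supporting plane at $X$ and compare it with $d_x$---you via the inscribed ball $\overline{B(x,d_x)}$, the paper via the foot of the perpendicular $F$ and the intersection point $F'\in\partial\Omega$ on $\overline{xF}$.
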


\begin{proof}
Let us first prove \eqref{prop13}.  Let $F$ be the projection of $x$ on the tangent plane  $T_{x}(\partial \Omega).$ Because of convexity, $\overline{xF}$ intersects $\partial\Omega$ at one point $F'$. 
Then,\begin{equation}
|x-X|N(x,\zeta)=|x-F|\geq |x-F'|\geq d_x,\end{equation}
which implies \eqref{prop13}.  

When $(x-y)  \parallelsum \zeta$, \eqref{prop11} and \eqref{prop12} are trivial. If not, 
we let \begin{align}
e_1&:=\frac{\zeta}{|\zeta|},\\e_3&:=\frac{e_1\times(y-x)}{|e_1\times(y-x)|},\\e_2&:=e_3\times e_1.
\end{align}
Also, we denote
\begin{align}
&n_1= n(X)\cdot e_1,\\&n_2= n(X)\cdot e_2,\\&n_3= n(X)\cdot e_3\\ &n'=n_1e_1+n_2e_2.
\end{align}
Notice that $n_1^2+n_2^2+n_3^2=1$  and $N(x,\zeta)=|n_1|$. 
Let $E$ be the plane containing  $x$, $y$, $X$ and $Y$ and  $\Gamma^*=\partial \Omega \cap E.$   We are going to discuss plane geometry on the plane $E.$
Since $|X-x| \le |Y-y|$, the point $y^* : = y + X-x$ lies on the line segment $\overline{yY}.$ If $y^* = Y$, it is obvious that \eqref{prop11} and \eqref{prop12} hold true. In what follows, we assume $y^* \ne Y.$ Due to the convexity of $\Omega$, the tangent line of $\Gamma^*$ passing $X$ would intersect the half line $\overrightarrow{\rm yY}$ at a single point $Y^*.$ For the sake of convenience, we define
\begin{equation*}
\theta_1 = \angle XYy^*, \quad \theta_2 = \angle XY^*Y, \quad \theta_3 = \angle Yy^*X.
\end{equation*}
By the law of sines, we see that
\begin{align*}
& \frac{\overline{Xy^*}}{\sin \theta_1} = \frac{\overline{XY}}{\sin \theta_3}  \qquad \text{ and } 
&  \frac{\overline{Xy^*}}{\sin \theta_2} = \frac{\overline{XY^*}}{\sin \theta_3}.
\end{align*} 
In case $\theta_1 \ge \frac{\pi}{2}$ it is obvious that
\begin{equation}
\label{xy1}
\overline{XY} < \overline{Xy^*} = \overline{xy}.
\end{equation}
In case $\theta_1 < \frac{\pi}{2},$ by monotonicity of sine function on the interval $[0, \frac{\pi}{2}]$ and the fact  $\theta_1 > \theta_2$  , we see
\begin{equation}
\label{xy2}
\overline{XY} =\frac{\sin \theta_2 }{\sin \theta_1} \overline{XY^*} < \overline{XY^*}  <\frac{1}{\sin \theta_2} \overline{Xy^*}=\frac{1}{\sin \theta_2} \overline{xy}.\end{equation}
 On the other hand, one may readily see that
 $$\sin \theta_2 = \left| e_1 \cdot \frac{n'}{|n'|} \right| = \left| \frac{n_1}{\sqrt{n_1^2 + n_2^2}}  \right|.$$
Summing up of \eqref{xy1} and \eqref{xy2},  in any case, we obtain that
\begin{equation}
|X-Y| \le \frac{1}{\sin \theta_2}|x-y| \le \frac{1}{|n_1|}|x-y| = \frac{1}{N(x, \zeta)}|x-y|. 
\end{equation}
Finally, 
\begin{equation}
\big||x-X|-|y-Y|\big| \le | X-Y| + |x-y| \le  \frac{2}{N(x, \zeta)}|x-y|.
\end{equation}
This completes the proof of Proposition~\ref{Propdifference}.
\end{proof}

Next, we shall prove the following proposition which has been mentioned  in \cite{RegularChen}.

\begin{prop}\label{pointtobd}
Let $\Omega$ be a $C^1$ bounded convex domain  in $\mathbb{R}^3$. Suppose $x\in \Omega$, $X=p(x,\zeta)\in\partial\Omega$,  and $z\in\overline{xX}$.
Then, 
 \begin{equation}
d_z\geq \frac{d_x}{R}|z-X|, \end{equation}
where $R$ is the diameter of $\Omega$.
\end{prop}
 \begin{proof} 
 We denote a point on $\partial \Omega$ that realizes $d_z$ by $Z$. Let $L_x$ be the plane passing $x$ perpendicular to $\zeta$. Let $L_Z$ be the plane passing $Z$, $z$, and $x$. We denote the intersection point of $L_x$, $L_Z$, and $\partial \Omega$ on the same side with $Z$ on $L_Z$ to $\overline{Xx}$ by $A$. Let $\theta_1=\angle{ZXz}$, $\theta_2=\angle{XZz}$, and $\theta_1'=\angle{AXz}$.
  Due to the convexity of $\Omega$, we have \begin{equation}
  1\geq \sin\theta_1 \geq \sin\theta'_1= \frac{|A-x|}{|A-X|}\geq \frac{d_x}{R},
  \end{equation}
  where $R$ is the diameter of $\Omega$. 
  By the law of sines, \begin{equation}
  d_z\geq d_z \sin\theta_2=|X-z|\sin\theta_1 \ge |X-z|\frac{d_x}{R}.
  \end{equation}
  This concludes the proposition.
  \end{proof}

Now, we are ready to prove Lemma \ref{HolderIandII}.
\begin{proof}  
Without loss of generality, we may assume $|X-x|\leq |Y-y|.$ Hence,
\begin{equation}
\label{Ixy}
\begin{split}
&|I(x,\zeta)-I(y,\zeta)|=|f(X,\zeta)e^{-\nu(|\zeta|)\frac{|X-x|}{|\zeta|}}-f(Y,\zeta)e^{-\nu(|\zeta|)\frac{|Y-y|}{|\zeta|}}|\\
&\leq |f(X,\zeta)-f(Y,\zeta)|e^{-\nu(|\zeta|)\frac{|X-x|}{|\zeta|}}+|f(Y,\zeta)|\big|e^{-\nu(|\zeta|)\frac{|X-x|}{|\zeta|}}-e^{-\nu(|\zeta|)\frac{|Y-y|}{|\zeta|}}\big|\\
&\leq Ce^{-a|\zeta|^2}\left((\frac{|x-y|}{N(x,\zeta)})^{1-\epsilon} +(\frac{|x-y|}{N(x,\zeta)|\zeta|}) \right)e^{-\nu_0\frac{d_x}{N(x,\zeta)|\zeta|}}\\
&\leq  Ce^{-a|\zeta|^2}\left((\frac{|\zeta||x-y|}{d_x})^{1-\epsilon}+(\frac{|x-y|}{d_x})\right)\\
&\leq  Cd_x^{-1}|x-y|^{1-\epsilon}e^{-\frac{a}{2}|\zeta|^2}.\end{split}\end{equation}
Notice that we have used the mean value theorem and Proposition \ref{Propdifference} in the above estimate.
We observe that the third line of \eqref{Ixy} gives
\begin{equation}
\label{Ixy2}
|I(x,\zeta)-I(y,\zeta)|\leq C \left(\frac{|x-y|^{1-\epsilon}}{N(x,\zeta)}+\frac{|x-y|}{N(x,\zeta)|\zeta|}\right)e^{-a|\zeta|^2},
\end{equation}
Due to the symmetry of $x$ and $y$, by \eqref{Ixy} and \eqref{Ixy2}, we obtain \eqref{Iinfest} and \eqref{IwithN}.
To prove \eqref{kdIest}, we first divide the domain of integration into two:
 \begin{align}B_0&:=\big\{\zeta'\in\mathbb{R}^3\big||\zeta-\zeta'|< d_{x,y}^{\frac13}\big\},\\  B_0^c&:=\mathbb{R}^3\setminus B_0.\end{align} 

We denote the corresponding integrals by $K\Delta I_1$ and $K\Delta I_2$ respectively. Using \eqref{Iinfest}, we have
\begin{equation}\begin{split}|K\Delta I_1|&\leq C\int_{B_0}\frac{|x-y|^{1-\epsilon}}{|\zeta-\zeta'|d_{x,y}}e^{-\frac a2|\zeta'|^2}d\zeta'\\
& \leq C\frac{|x-y|^{1-\epsilon}}{d_{x,y}}\int_0^{d_{x,y}^{\frac13}}\int_0^\pi\int_0^{2\pi}r\sin\theta d\phi d\theta dr\\
&\leq C d_{x,y}^{-\frac13}|x-y|^{1-\epsilon},\end{split}\end{equation}
where we used the spherical coordinates centered at $\zeta$ in the above inequality. 

 Using \eqref{IwithN} and changing variable similar to Section \ref{SecDiffBpsi}, we have
\begin{equation}\begin{split}
&|K\Delta I_2|\leq C\int_{ B_0^c} \frac{e^{-a |\zeta'^2|}}{d_{x,y}^{\frac13}}\left(\frac{|x-y|^{1-\epsilon}}{N(x,\zeta')}+\frac{|x-y|}{N(x,\zeta')|\zeta'|}+\frac{|x-y|^{1-\epsilon}}{N(y,\zeta')}+\frac{|x-y|}{N(y,\zeta')|\zeta'|}\right)d\zeta'\\
& \leq C\int_0^\infty\int_{\partial\Omega}\frac{e^{-al^2|x-z|^2}}{d_{x,y}^{\frac13}}
\left(\frac{|x-z||x-y|^{1-\epsilon}}{|(x-z)\cdot n(z)|}+\frac{|x-y|}{|(x-z)\cdot n(z)|l}\right)l^2|(x-z)\cdot n(z)|dA(z)dl\\
&\quad +C\int_0^\infty\int_{\partial\Omega}\frac{e^{-al^2|y-z|^2}}{d_{x,y}^{\frac13}}
\left(\frac{|y-z||x-y|^{1-\epsilon}}{|(y-z)\cdot n(z)|}+\frac{|x-y|}{|(y-z)\cdot n(z)|l}\right)l^2|(y-z)\cdot n(z)|dA(z)dl\\
&\leq \frac{C}{d_{x,y}^{\frac13}} \int_{\partial\Omega}\int_0^\infty e^{-al^2|x-z|^2}\left(l^2|x-z||x-y|^{1-\epsilon}+l|x-y|)\right)dldA(z)\\
& \quad+\frac{C}{d_{x,y}^{\frac13}} \int_{\partial\Omega}\int_0^\infty e^{-al^2|y-z|^2}\left(l^2|y-z||x-y|^{1-\epsilon}+l|x-y|)\right)dldA(z)\\
&\leq \frac{C}{d_{x,y}^{\frac13}} \int_{\partial\Omega}\int_0^\infty e^{-as^2}\left(s^2|x-y|^{1-\epsilon}+s|x-y|)\right)ds\left(\frac1{|x-z|^2}+\frac1{|y-z|^2}\right)dA(z) \\
&\leq \frac{C}{d_{x,y}^{\frac13}}|x-y|^{1-\epsilon} \int_{\partial\Omega}\left(\frac1{|x-z|^2}+\frac1{|y-z|^2}\right)dA(z)\\
&\leq \frac{C}{d_{x,y}^{\frac13}}|x-y|^{1-\epsilon}(\big|\ln d_x\big|+\big|\ln d_y\big|+1)\leq \frac{C}{d_{x,y}^{\frac13}}|x-y|^{1-\epsilon}(\big|\ln d_{x,y}\big|+1).
\end{split}\end{equation}

Notice that, in the above estimates,  we changed the variable $s=|x-z|l$, $s=|y-z|l$ and applied Lemma \ref{lemmaA}.  The proof of  Lemma \ref{HolderIandII} is complete.

\end{proof}

\section{Regularity due to Mixing \label{Mixtureenhanced}}

We shall elaborate the smoothing effect due to the combination of collision and transport in this section.  In the following proposition, we improve the estimate in \cite{RegularChen} from H\"{o}lder continuity with order $\frac12-$ to almost Lipschitz continuous.
\begin{prop} \label{HolderGln}
Suppose $f\in L^\infty_{x,\zeta}$ is a solution to the stationary linearized Boltzmann equation. Then, for all $x_0, x_1 \in \Omega$ and $\zeta \in \mathbb{R}^3$,
\begin{equation}
|G(x_0,\zeta)-G(x_1,\zeta)|\leq C\Vert f\Vert_{L^\infty_{x,\zeta}}|x_0-x_1|(1+\big|\ln|x_0-x_1|\big|).
\end{equation}
\end{prop}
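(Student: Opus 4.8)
\textbf{Proof proposal for Proposition~\ref{HolderGln}.}

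The plan is to mimic the structure of the proof of Lemma~\ref{DFDO}, since $G(x_0,\zeta)$ has essentially the same shape as $D_f$: after the change of variables it is an integral over $\Omega\times(0,\infty)$ of $K(f)$ against a kernel whose $x_0$-dependence sits in three places: the argument $\rho\frac{x_0-y}{|x_0-y|}$ of $K(f)$, the exponential $e^{-\nu(\rho)|x_0-y|/\rho}$, and the geometric weight $\rho/|x_0-y|^2$. Accordingly I would write $G(x_0,\zeta)-G(x_1,\zeta)$ as $\Delta G_K+\Delta G_O$, where $\Delta G_K$ collects the difference coming from the velocity argument of $K(f)$ and $\Delta G_O$ collects the difference coming from the exponential and the weight. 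For $\Delta G_K$ I would split $\Omega$ into $\Omega_1=\Omega\cap B(x_0,2|x_0-x_1|)$ and $\Omega_2=\Omega\setminus B(x_0,2|x_0-x_1|)$; on $\Omega_1$ the measure is $O(|x_0-x_1|^3)$ and the singular weight $\rho/|x_0-y|^2$ is locally integrable, giving an $O(|x_0-x_1|)$ bound, while on $\Omega_2$ I would use the Lipschitz estimate for $K(f)$ in $\zeta$ together with the elementary bound $\bigl|\frac{x_0-y}{|x_0-y|}-\frac{x_1-y}{|x_1-y|}\bigr|\le C\frac{|x_0-x_1|}{|x_0-y|}$ (exactly as in \eqref{differenceKxy}), producing a factor $\rho^3/|x_0-y|^3$ that integrates to $|x_0-x_1|(1+|\ln|x_0-x_1||)$ after the $\rho$-integral is absorbed by $e^{-\rho^2/2}$ coming from the Maxwellian-type factor in the kernel and $\nu(\rho)/\rho$ staying harmless.

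For $\Delta G_O$ I would again separate the case $2|x_0-x_1|\ge r$ (trivial, bounded by $C\|f\|_\infty|x_0-x_1|/r$) from $2|x_0-x_1|<r$, and in the latter case split $\Omega$ into $\Omega_1$ and $\Omega_2$ as above. On $\Omega_1$ smallness of the domain gives $O(|x_0-x_1|)$; on $\Omega_2$ I would connect $x_0$ and $x_1$ by a curve $x(t)$ (a segment, or a normal geodesic if the endpoints lie on $\partial\Omega$), differentiate $e^{-\nu(\rho)|x(t)-y|/\rho}\,\rho/|x(t)-y|^2$ in $t$, and bound the derivative by $C\,e^{-\nu(\rho)|x(t)-y|/\rho}\bigl(\frac{1}{|x(t)-y|^3}+(1+\tfrac{\nu(\rho)}{\rho})\frac{1}{|x(t)-y|^2}\bigr)$, just as in the $\Delta D_{fO}^2$ computation. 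Integrating over $y\in\Omega_2$ in polar coordinates centered at $x(t)$ turns $|x(t)-y|^{-3}$ into $\int_{|x_0-x_1|}^R r^{-1}\,dr=O(1+|\ln|x_0-x_1||)$, the $\rho$-integral is controlled by $\int_0^\infty\rho(1+\rho)e^{-\rho^2/2}\,d\rho<\infty$, and the $t$-integral contributes the length of the connecting curve, which is $O(|x_0-x_1|)$. Combining, $\Delta G_O\le C\|f\|_\infty|x_0-x_1|(1+|\ln|x_0-x_1||)$.

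The one genuinely new point compared with Lemma~\ref{DFDO} is that here $x_0,x_1$ are allowed to be interior points of $\Omega$, not boundary points, so I cannot use a normal geodesic on $\partial\Omega$ and the inequality $s\le\frac87|x_0-x_1|$ from Remark~\ref{rem32}; instead I would simply take $x(t)$ to be the straight segment from $x_0$ to $x_1$, which has length exactly $|x_0-x_1|$, and note that the derivative bound above holds verbatim with $|x'(t)|=1$. I also need the upper limit $|x_0-s\zeta-p(x_0-s\zeta,\zeta')|$ in the definition of $G$ to cause no trouble: since that upper limit is itself a function of $x_0$, differentiating in $t$ produces a boundary term, but the integrand at the endpoint $y=p(\cdots)$ vanishes against the factor $(x-y)\cdot n(x)$-type weight — or, more safely, one can absorb this by noting the domain of $y$ is exactly $\Omega$ in the space formulation, so no boundary term arises. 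The main obstacle, then, is organizing the $\Omega_2$ estimate so that the triple of singular factors $\frac{1}{|x(t)-y|^3}$ never appears without the matching cancellation $|x_0-x_1|$ (from either the Lipschitz estimate on $K(f)$ or the fundamental theorem of calculus), which is precisely what the two-region split and the identities \eqref{differenceKxy} are designed to guarantee; everything else is a routine repetition of the $D_f$ argument.
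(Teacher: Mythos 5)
Your decomposition into $\Delta G_K + \Delta G_O$ and your treatment of $\Delta G_K$ (split at radius $2|x_0-x_1|$, Lipschitz continuity of $K(f)$ in the velocity argument via \eqref{differenceKxy}, logarithmic integral on $\Omega_2$) match the paper exactly. The remark that a straight segment replaces the normal geodesic for interior points $x_0, x_1$ is also sensible, though moot in the paper because it simply cites \cite{RegularChen} for the whole $G_O$ term.

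There is, however, a genuine gap in your $\Delta G_O$. You account for the $x_0$-dependence in the argument of $K(f)$, the exponential $e^{-\nu(\rho)|x_0-y|/\rho}$, and the geometric weight $\rho/|x_0-y|^2$, but you have omitted the fourth and most delicate $x_0$-dependence: the argument of the outer kernel $k\bigl(\zeta,\rho\frac{x_0-y}{|x_0-y|}\bigr)$. This is precisely what distinguishes $G$ from $D_f$. In $D_f$ the outer factor is the smooth Gaussian weight $e^{-\rho^2/2}(x-y)\cdot n(x)/|x-y|$, so the $\Delta D_{fO}$ argument only has to control the derivatives of the exponential and the power weight. In $G$, by contrast, differentiating $k(\zeta,\rho\frac{x(t)-y}{|x(t)-y|})$ along the segment brings in $\nabla_{\zeta'}k$, which by \eqref{gradianK} has a $|\zeta-\zeta'|^{-2}$ singularity rather than the $|\zeta-\zeta'|^{-1}$ singularity of $k$ itself, and a second geometric factor $|x(t)-y|^{-1}$ from the chain rule. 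The resulting bound $\rho^3|\nabla_{\zeta'}k|/|x(t)-y|^3$ is still integrable after passing to the coordinates $\zeta'=\rho\,\frac{x-y}{|x-y|}$ and invoking \eqref{CH}, but this step is not contained in the $\Delta D_{fO}$ computation you are invoking and must be spelled out. Your claim that "everything else is a routine repetition of the $D_f$ argument" is therefore not correct as stated: the $k$-difference is exactly the part of $G_O$ that does not repeat, and the paper sidesteps it by citing \cite{RegularChen} rather than reproving it. A minor related slip: the $\rho$-integrability in $G$ is not supplied by an explicit $e^{-\rho^2/2}$ factor (there is none in the formula \eqref{Gformula22}); it comes from the Gaussian tail of $k(\zeta,\rho\omega)$ in its second argument via \eqref{estimateK}.
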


\begin{proof}As in \cite{RegularChen}, 
We observe that
 \begin{equation}
\begin{split}
&\left|G(x_0,\zeta)-G(x_1,\zeta)\right|\leq
\bigg|\int_0^\infty\int_{\Omega} k \left( \zeta,\rho\frac{(x_0-y)}{|x_0-y|} \right)\frac{\rho e^{-\nu(\rho)\frac{|x_0-y|}{\rho}}}{|x_0-y|^2}\\&\quad\cdot\left[K(f) \left( y, \rho\frac{(x_0-y)}{|x_0-y|} \right)-K(f) \left( y, \rho\frac{(x_1-y)}{|x_1-y|} \right) \right] dyd\rho\bigg|\\ &+\left|\int_0^\infty\int_{\Omega} K(f) \left(y, \rho\frac{(x_1-y)}{|x_1-y|} \right) \right.\\&\quad\cdot \left. \left[k \left(\zeta,\rho\frac{(x_0-y)}{|x_0-y|} \right)\frac{\rho e^{-\nu(\rho)\frac{|x_0-y|}{\rho}}}{|x_0-y|^2}-k \left(\zeta,\rho\frac{(x_1-y)}{|x_1-y|} \right)\frac{\rho e^{-\nu(\rho)\frac{|x_1-y|}{\rho}}}{|x_1-y|^2}\right]dyd\rho\right|\\&=:G_K+G_O.
\end{split}
\end{equation} The estimate for $G_O$ has already been done in \cite{RegularChen}.  We only need to  estimate  $G_K$.  We  break the domain of integration into two,  $\Omega_1=\Omega\cap B(x_0,2|x_0-x_1|)$ and $\Omega_2:= \Omega\setminus B(x_0,2|x_0-x_1|)$, and name the corresponding integrals $G_K^1$ and $G_K^2$ respectively. Because of smallness of the domain of integration, we have
\begin{equation}
|G_K^1|\leq C\Vert f\Vert_{L^\infty_{x,\zeta}}|x_0-x_1|.
 \end{equation}
 To deal with $G_K^2$, we need to use the Lipschitz continuity of $K(f)$ \eqref{differenceKxy},
 \begin{equation}\begin{split}\notag
& \left|K(f) \left(y, \rho\frac{(x_0-y)}{|x_0-y|} \right)-K(f) \left(y, \rho\frac{(x_1-y)}{|x_1-y|} \right)\right|  \leq C\rho\frac{|x_0-x_1|}{|x_0-y|}\Vert f \Vert_{L^\infty_{x,\zeta}}.\end{split}\end{equation}
Therefore, by taking the coordinate change $$y-x_0=(r \cos\theta, r \sin \theta\cos\phi,r\sin \theta\sin \phi),$$
we see that
\begin{equation}
\begin{split}
&|G_K^2|\leq C{|x_0-x_1|}\Vert f \Vert_{L^\infty_{x,\zeta}}\int_0^\infty\int_{\Omega_2} \left| k\left(\zeta,\rho\frac{(x_0-y)}{|x_0-y|} \right) \right| \frac{\rho^2 e^{-\nu(\rho)\frac{|x_0-y|}{\rho}}}{|x_0-y|^3}dyd\rho\\&\leq C{|x_0-x_1|}\Vert f \Vert_{L^\infty_{x,\zeta}}\int_{\mathbb{R}^3}|k(\zeta,\zeta')|\int_{2|x_0-x_1|}^R\frac1rdrd\zeta'\\& \leq C{|x_0-x_1|}\left( 1+\big|\ln|x_0-x_1|\big| \right)\Vert f \Vert_{L^\infty_{x,\zeta}}.
\end{split}
\end{equation}

\end{proof}
With the above proposition, we can prove the following estimate.
\begin{prop}\label{HolderIII} Suppose $\Omega$  satisfies the same geometric assumption as mentioned in Lemma~\ref{lemmaA}. Then, the following inequality holds 
\begin{equation}
\label{dxy}
|III(x,\zeta)-III(y,\zeta)|\leq C\Vert f\Vert_{L^\infty_{x,\zeta}}(1+d_{x,y}^{-1})|x-y|^{1-\epsilon}.
\end{equation}
\end{prop}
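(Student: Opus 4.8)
\textbf{Proof plan for Proposition~\ref{HolderIII}.}
The plan is to recall the formula
\[
III(x,\zeta)=\int_0^{\tau_-(x,\zeta)}e^{-\nu(|\zeta|)s}G(x-s\zeta,\zeta)\,ds
\]
and to compare $III(x,\zeta)$ and $III(y,\zeta)$ by transporting both integrals to a common footing. The first step is to reduce to the case $|x-y|$ small (say $2|x-y|<r$ for the same $r$ chosen in the proof of Lemma~\ref{difBpsi}); for $|x-y|$ bounded below, the boundedness of $\|f\|_{L^\infty_{x,\zeta}}$ together with Proposition~\ref{HolderGln} trivially gives the estimate. Write $III(x,\zeta)-III(y,\zeta)$ as a difference of integrals along two rays emanating in direction $\zeta$, split it into the contribution coming from the difference of upper limits $\tau_-(x,\zeta)$ versus $\tau_-(y,\zeta)$ and the contribution from the difference of the integrands with a common domain of $s$-integration.

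For the integrand-difference term, on the common range of $s$ I would write
\[
G(x-s\zeta,\zeta)-G(y-s\zeta,\zeta)
\]
and apply Proposition~\ref{HolderGln} with $x_0=x-s\zeta$ and $x_1=y-s\zeta$, noting $|x_0-x_1|=|x-y|$, to bound this by $C\|f\|_{L^\infty_{x,\zeta}}|x-y|(1+|\ln|x-y||)$, then integrate the harmless factor $e^{-\nu(|\zeta|)s}$ over $s\in(0,\tau_-)$. This already produces a bound of the form $C\|f\|(1+|\ln|x-y||)|x-y|\,\tau_-$; since $\tau_-$ is bounded by the diameter, this contributes at most $C\|f\|\,|x-y|^{1-\epsilon}$ after absorbing the logarithm, and in particular needs no negative power of $d_{x,y}$. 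The term coming from the mismatch of traveling times $|\tau_-(x,\zeta)-\tau_-(y,\zeta)|$ is where the factor $(1+d_{x,y}^{-1})$ enters: on that sliver we only control $G$ by its sup norm $C\|f\|_{L^\infty_{x,\zeta}}$ (using the kernel bound \eqref{estimateK} and Proposition~\ref{cafdecay} to see $G$ is bounded), so the contribution is $\le C\|f\|\,|\tau_-(x,\zeta)-\tau_-(y,\zeta)|$, and one must estimate $|\tau_-(x,\zeta)-\tau_-(y,\zeta)|$. This is a purely geometric quantity: by convexity and the projection geometry already used in Proposition~\ref{Propdifference} (in particular \eqref{prop12} together with \eqref{prop13}), one gets $|\tau_-(x,\zeta)-\tau_-(y,\zeta)|\le C\,d_{x,y}^{-1}|x-y|\le C\,d_{x,y}^{-1}|x-y|^{1-\epsilon}$ (using $|x-y|\le \mathrm{diam}\,\Omega$). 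Summing the two contributions yields \eqref{dxy}.

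I expect the main obstacle to be the bookkeeping at the endpoint: both $x$ and $y$ are interior points, the rays $x-s\zeta$ and $y-s\zeta$ hit $\partial\Omega$ at different times and different points, and one must be careful that, as $s$ ranges up to $\min(\tau_-(x,\zeta),\tau_-(y,\zeta))$, the base points $x-s\zeta$ and $y-s\zeta$ stay in $\Omega$ with a distance to $\partial\Omega$ controlled from below by something comparable to $d_{x,y}$ times the relative position along the ray — this is exactly Proposition~\ref{pointtobd}, which guarantees $G$ is evaluated at legitimate interior points and that Proposition~\ref{HolderGln} applies uniformly. The delicate point is that Proposition~\ref{HolderGln} has no $d_{x,y}$-loss, so all the loss must come solely from $|\tau_-(x,\zeta)-\tau_-(y,\zeta)|$; one therefore needs the sharp geometric estimate on the travel-time difference, and the factor $N(x,\zeta)^{-1}\le d_x^{-1}|x-X|\le d_x^{-1}\,\mathrm{diam}\,\Omega$ from \eqref{prop13} is what converts the $1/N$ appearing in \eqref{prop11}–\eqref{prop12} into the claimed $(1+d_{x,y}^{-1})$.
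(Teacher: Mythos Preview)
Your decomposition into the integrand-difference term $\Delta_1$ and the endpoint-mismatch term $\Delta_2$ is exactly the paper's, and your treatment of $\Delta_1$ via Proposition~\ref{HolderGln} is correct. The gap is in $\Delta_2$. You bound $e^{-\nu(|\zeta|)s}\le 1$ on the sliver and then assert
\[
|\tau_-(x,\zeta)-\tau_-(y,\zeta)|\le C\,d_{x,y}^{-1}|x-y|.
\]
This is false as stated: since $\tau_-(x,\zeta)=|x-p(x,\zeta)|/|\zeta|$, the identities \eqref{prop12}--\eqref{prop13} give only
\[
|\tau_-(x,\zeta)-\tau_-(y,\zeta)|=\frac{\big||x-X|-|y-Y|\big|}{|\zeta|}\le \frac{2|x-y|}{N(x,\zeta)|\zeta|}\le \frac{2R\,|x-y|}{d_x\,|\zeta|},
\]
with an uncontrolled factor $|\zeta|^{-1}$. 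For small $|\zeta|$ (fixed direction) the travel-time difference genuinely diverges, so your bound cannot hold uniformly in $\zeta$, and the proposition requires a $\zeta$-independent right-hand side.

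The fix, which is what the paper does, is \emph{not} to discard the exponential. Assuming $|x-X|\le |y-Y|$, integrate $e^{-\nu s}$ over the sliver (or use the mean value theorem) to retain the factor $e^{-\nu(|\zeta|)|x-X|/|\zeta|}$:
\[
\Delta_2\le C\|f\|_\infty\, e^{-\nu(|\zeta|)\frac{|x-X|}{|\zeta|}}\cdot\frac{\nu(|\zeta|)}{|\zeta|}\cdot\frac{|x-y|}{N(x,\zeta)}.
\]
Now \eqref{prop13} gives $|x-X|\ge d_x/N(x,\zeta)$, so with $t:=\nu(|\zeta|)d_x/(N(x,\zeta)|\zeta|)$ the elementary bound $te^{-t}\le C$ yields
\[
e^{-\nu(|\zeta|)\frac{d_x}{N(x,\zeta)|\zeta|}}\cdot\frac{\nu(|\zeta|)}{N(x,\zeta)|\zeta|}\le \frac{C}{d_x},
\]
and hence $\Delta_2\le C\|f\|_\infty\,|x-y|/d_x$. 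The exponential is precisely what absorbs the combined factor $(N|\zeta|)^{-1}$; converting $1/N$ to $1/d_x$ via \eqref{prop13} alone, as you propose, leaves the $1/|\zeta|$ uncompensated.
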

\begin{proof} Let $X=p(x,\zeta)$ and $Y=p(y,\zeta)$. We will demonstrate the proof for  the case when $|X-x|\leq |Y-y|$. The other case can be proved in the same fashion. 
Noting that  
$$
   \tau_{-}(x,\zeta) = \frac{|X-x|}{|\zeta|}  \,\, \text{ and }    \tau_{-}(y,\zeta) = \frac{|Y-y|}{|\zeta|},   $$
  we have
\begin{equation}
\begin{split}
|III(x,\zeta)-III(y,\zeta)|\leq&\int_0^{\frac{|X-x|}{|\zeta|}}e^{-\nu(|\zeta|)s}|G(x-s\zeta,\zeta)-G(y-s\zeta,\zeta)|ds\\&\quad+\int_{\frac{|X-x|}{|\zeta|}}^{\frac{|Y-y|}{|\zeta|}}e^{-\nu(|\zeta|)s}|G(y-s\zeta,\zeta)|ds\\&=:\Delta_1+\Delta_2.
\end{split}
\end{equation}
By applying Proposition \ref{HolderGln}, we have
\begin{equation}
\Delta_1\leq C \Vert f \Vert_\infty|x-y|^{1-\epsilon}.\end{equation}
On the other hand
\begin{equation}
\begin{split}\Delta_2&\leq C\Vert f \Vert_\infty\int_{\frac{|X-x|}{|\zeta|}}^{\frac{|Y-y|}{|\zeta|}}e^{-\nu(|\zeta|)s}ds\\
&\leq C\Vert f \Vert_\infty\left|e^{\frac{-\nu(|\zeta|)}{|\zeta|}|X-x|} -e^{-\frac{\nu(|\zeta|)}{|\zeta|}|Y-y|}\right|\\
&\le  C\Vert f \Vert_\infty e^{\frac{-\nu(|\zeta|)}{|\zeta|}|X-x|}\left|{\frac{\nu(|\zeta|)}{|\zeta|}(|Y-y|-|X-x|)}\right|,
\end{split}\end{equation}
where we have applied the mean value theorem and used the assumption $|X-x| \le |Y-y|.$ We the apply  Proposition \ref{Propdifference} and get 
\begin{equation}
\label{dx}
\begin{split}
\Delta_2&\leq C\Vert f \Vert_\infty e^{-\frac{\nu(|\zeta|)}{|\zeta|} \frac{d_x}{N(x,\zeta)}}\frac{\nu(|\zeta|)}{|\zeta|} \frac{|x-y|}{N(x,\zeta)}\\&\leq C\Vert f \Vert_\infty \frac{|x-y|}{d_x} .\end{split}
\end{equation}
Similarly, in case $|Y-y| \le |X-x|$, we can conclude that 
\begin{equation}
\label{dy}
\Delta_2\leq C\Vert f \Vert_\infty \frac{|x-y|}{d_y} .
\end{equation}
By \eqref{dx} and \eqref{dy}, we obtain \eqref{dxy} and the proof of Proposition~\ref{HolderIII} is complete.

\end{proof}

\section{Behavior near the boundary\label{LocalHatBoundary}.}
In this section, we  investigate the behavior of $f$ near the boundary.  This is a preparation for proving the differentiability of $D_f$. 

\begin{lem}\label{HolderBoundary}
 Let $\Omega$ be the domain introduced in Lemma~\ref{lemmaA}. Assume  $f\in L^\infty_{x,\zeta}$ is a solution to the stationary linearized Boltzmann equation such that for fixed $0<\epsilon<\frac16$ 
,  there exist $a,\  M_0>0$ such that
  \begin{align}
 \label{c91} &|f(X,\zeta)-f(Y,\zeta)|\leq M_0|X-Y|^{1-\epsilon}e^{-a|\zeta|^2},\\
 \label{c92} &|f(X,\zeta)|\leq M_0e^{-a|\zeta|^2}
  \end{align} 
  for all $(X,\zeta),\ (Y,\zeta)\in\Gamma_-$.
  Then,  for $x\in \partial \Omega$ and $y\in \Omega$, 
  \begin{align}\label{fHolderBndy}
 | f(x,\zeta)-f(y,\zeta)|\leq C \left( 1+\frac1{|\zeta|} \right)|x-y|^{\frac12(1-\epsilon)}.
   \end{align} 
  \end{lem}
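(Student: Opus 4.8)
The plan is to use the integral representation \eqref{inteq1} for $f$ at the interior point $y$ and a corresponding one-term expansion for $f$ at the boundary point $x$, and to compare them along the shared velocity direction $\zeta$. Write $X = p(x,\zeta)$ and $Y = p(y,\zeta)$. Because $x \in \partial\Omega$, either $\zeta$ points into $\Omega$ at $x$ (so $\tau_-(x,\zeta) = 0$ and $f(x,\zeta)$ is given directly by the boundary data \eqref{fbdry}) or $\zeta$ points outward, in which case $x$ itself plays the role of an interior-type point and $X$ is the backward exit point. In the first, and main, case we have $f(x,\zeta) = \psi(X_{\mathrm{bd}}) M^{1/2} + T(\cdots)(|\zeta|^2-2)M^{1/2}$ with $X_{\mathrm{bd}} = x$, while $f(y,\zeta)$ carries the same structure with $p(y,\zeta) = Y$, plus a $K(f)$ integral over $[0,\tau_-(y,\zeta)]$. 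So
\begin{align*}
|f(x,\zeta) - f(y,\zeta)| &\le \big|\psi(x) - \psi(Y)\big| M^{1/2} + \big|T(x) - T(Y)\big|\,|\,|\zeta|^2-2\,|\,M^{1/2}\big|1 - e^{-\nu\tau_-(y,\zeta)}\big|\\
&\quad + C M_0\, e^{-a|\zeta|^2}\big|1 - e^{-\nu(|\zeta|)\tau_-(y,\zeta)}\big| + \int_0^{\tau_-(y,\zeta)} e^{-\nu s}\,|K(f)(y-s\zeta,\zeta)|\,ds.
\end{align*}
The last two terms are controlled by $\|f\|_{L^\infty_{x,\zeta}}$ times $\nu(|\zeta|)\tau_-(y,\zeta) = \nu(|\zeta|)|Y-y|/|\zeta|$, and since $|Y - y| \lesssim |x-y|^{1/2}$ by Lemma~\ref{Atboundarysqrt} (this is exactly what that lemma is built for — $d_y \lesssim |x-y|$, hence the travel distance to the boundary is $O(d_y^{1/2}) = O(|x-y|^{1/2})$ in the relevant geometric case), and $\nu(|\zeta|)/|\zeta| \lesssim (1+1/|\zeta|)$ by \eqref{estimatenu}, these contribute the claimed bound $C(1+1/|\zeta|)|x-y|^{(1-\epsilon)/2}$ (in fact with exponent $1/2$, better than needed).

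The two leading terms require knowing that $\psi$ and $T$ are at least $\frac12(1-\epsilon)$-Hölder along $\partial\Omega$ with respect to Euclidean distance, evaluated at the points $x$ and $Y$ whose Euclidean separation is $|x - Y| \le |x - y| + |y - Y| \lesssim |x-y|^{1/2}$. For $T$ this is immediate: $T$ is bounded differentiable by hypothesis, so $|T(x) - T(Y)| \lesssim |x - Y| \lesssim |x-y|^{1/2}$. For $\psi$, I would invoke the decomposition $\psi = B_T + B_\psi + D_f$ from \eqref{defphi3}: Lemma~\ref{difBpsi} gives that $B_T$ and $B_\psi$ have bounded first derivatives along $\partial\Omega$, hence are Lipschitz, and Lemma~\ref{DFDO} gives $|D_f(x_0) - D_f(x_1)| \le C\|f\|_{L^\infty}|x_0-x_1|(1+|\ln|x_0-x_1||)$, which is better than any power less than $1$. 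Combining, $\psi$ is log-Lipschitz on $\partial\Omega$, so $|\psi(x) - \psi(Y)| \lesssim |x-Y|(1 + |\ln|x-Y||) \lesssim |x-y|^{1/2}(1+|\ln|x-y||) \lesssim |x-y|^{(1-\epsilon)/2}$, absorbing the logarithm into the $\epsilon$ loss. The Gaussian weights $M^{1/2}$ and $M^{1/2}(|\zeta|^2 - 2)$ are bounded in $\zeta$, so these terms are handled uniformly in $\zeta$, actually without the $1/|\zeta|$ factor.

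For the secondary case where $\zeta$ points outward at $x$, one simply applies the interior comparison: both $f(x,\zeta)$ and $f(y,\zeta)$ are given by \eqref{inteq1} with backward exit points $X = p(x,\zeta)$ and $Y = p(y,\zeta)$, and $|X - Y|$ is controlled by Proposition~\ref{Propdifference} together with Lemma~\ref{Atboundarysqrt} — here the factor $1/N(x,\zeta) = |\zeta|/|n(X)\cdot\zeta|$ can degenerate, but since $x \in \partial\Omega$ the grazing behaviour is precisely what Lemma~\ref{Atboundarysqrt} addresses through the dichotomy on the sign of $n(Y)\cdot(x-y)$, yielding $|x-y| \gtrsim d_y^{1/2}$ or $\lesssim d_y^{1/2}$ and converting the potentially singular $1/N$ into the stated $\frac12(1-\epsilon)$ exponent. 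The main obstacle, and the reason the exponent drops from $1-\epsilon$ (as in the interior estimates of Lemma~\ref{HolderIandII} and Proposition~\ref{HolderIII}) to $\frac12(1-\epsilon)$, is exactly this square-root loss: the distance from the interior point $y$ to its boundary projection is only $O(\sqrt{|x-y|})$ rather than $O(|x-y|)$ when $y$ is near the boundary and the chord to $x$ is nearly tangential, so every place where a travel-distance or an $N(x,\zeta)^{-1}$ factor appears costs a square root. Care must also be taken that the constant $C$ does not blow up as $\zeta$ ranges over all of $\mathbb{R}^3$; the Gaussian factors $e^{-a|\zeta|^2}$ in \eqref{c91}--\eqref{c92} and the bound $\nu(|\zeta|)/|\zeta| \le C(1+1/|\zeta|)$ are what keep it uniform, and the lone $1/|\zeta|$ in \eqref{fHolderBndy} is the residue of small $|\zeta|$ in the exponential-difference terms.
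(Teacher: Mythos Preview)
Your argument handles only the easy half of the lemma and misidentifies where the difficulty lies. The relevant dichotomy is not whether $\zeta$ is incoming at the boundary point $x$, but whether $\zeta\cdot n(Y_\perp)$ is negative or nonnegative, where $Y_\perp$ is the projection of $y$ onto $\partial\Omega$. Lemma~\ref{Atboundarysqrt} gives $|Y-y|\le C\,d_y^{1/2}$ only when $n(Y_\perp)\cdot(Y-y)\ge 0$, i.e.\ when $\zeta\cdot n(Y_\perp)\le 0$. Under your ``main case'' hypothesis $\zeta\cdot n(x)<0$ it can still happen (for nearly tangential $\zeta$) that $\zeta\cdot n(Y_\perp)>0$, and then the backward chord from $y$ crosses the whole domain: $|Y-y|$ is of order one, not $O(|x-y|^{1/2})$. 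In that regime every term in your displayed inequality involving $\tau_-(y,\zeta)$ or $|\psi(x)-\psi(Y)|$ is uncontrolled, and the argument collapses.

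The long-chord case is exactly the substance of the lemma, and your ``secondary case'' paragraph does not address it. Invoking Proposition~\ref{Propdifference} is of no help because its bounds carry $N(x,\zeta)^{-1}$ and here $d_x=0$, so there is nothing to absorb that factor. The paper instead introduces the shrunken domain $D_0=\{z:d_z\ge 4|x-y|\}$, observes that if one of the parallel chords $\overline{xX_0}$, $\overline{yY_0}$ misses $D_0$ then both miss the slightly smaller $D_1$ and the short-chord argument applies; otherwise each chord is split into three pieces (two short ends in the $O(|x-y|)$-collar and a long middle in $D_0$), and one compares $f(x,\zeta)$ and $f(y,\zeta)$ through the decomposition $I+II+III$. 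The ends contribute $O(|x-y|^{1/2}/|\zeta|)$ by length; the middle of $III$ uses Proposition~\ref{HolderGln}; the middle of $II$ is the delicate part and requires the refined estimate~\eqref{kdIest}, applied at points $z,z'$ with $d_{z,z'}\gtrsim |x-y|$ so that the factor $d_{z,z'}^{-1/3-}$ integrates to $|x-y|^{-1/3-}\cdot|x-y|$ along the chord. None of this machinery appears in your sketch. A secondary point: you appeal to the diffuse boundary structure via $\psi$ and $T$, but the lemma assumes only the abstract H\"older condition~\eqref{c91}--\eqref{c92} on $f|_{\Gamma_-}$; the paper's proof uses~\eqref{c91} directly and stays within the stated hypotheses.
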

The above estimate  not only gives a description on how singular $f$ can be when $\zeta$ is small near the boundary but also plays an important role in proving the differentiability of boundary flux and therefore the solution itself, which will be elaborated in next section. 

\begin{proof}Without loss of generality, we may only consider the case $|x-y|<\min(\frac15 R_0,1)$, where $R_0$ is introduced in Lemma \ref{Atboundarysqrt}. This case gives the unique projection of $y$ on $\partial\Omega$ denoted by $Y_\perp,$ i.e., $|y-Y_\perp|=d_y.$

{\sc Step 1.}
We start with the case $\zeta\cdot n(Y_\perp)<0$ and denote $Y_0=p(y,\zeta)$.
We observe that \begin{equation}\begin{split}
\label{e94}
|x-Y_0|\leq |x-y|+|y-Y_0|\leq C|x-y|^{\frac12},
\end{split}\end{equation}
where we have applied Lemma~\ref{Atboundarysqrt} by taking $x = Y_0 \in \partial \Omega$ in \eqref{sqrtgeq} and used the fact $|x-y| \le 1.$

Noting that $x , Y_0 \in \partial \Omega,$ by \eqref{c91} and the mean value theorem,  we see that 
\begin{equation}
\label{e95}
\begin{split}
&|f(x,\zeta)-f(y,\zeta)|\\
&=\Big|f(x,\zeta)-f(Y_0,\zeta)e^{-\nu(|\zeta|)\frac{|y-Y_0|}{|\zeta|}}\\
&\,\, -\int_0^{\frac{|y-Y_0|}{|\zeta|}} e^{-\nu(|\zeta|)t}K(f)(y-\zeta t,\zeta)dt\Big|\\
&\leq \left|f(x,\zeta)-f(Y_0,\zeta)\right|+|f(Y_0,\zeta)|\left|1-e^{-\nu(|\zeta|)\frac{|y-Y_0|}{|\zeta|}} \right|\\&\quad+\left|\int_0^{\frac{|y-Y_0|}{|\zeta|}} e^{-\nu(|\zeta|)t}K(f)(y-\zeta t,\zeta)dt\right|\\&\leq M_0|x-Y_0|^{1-\epsilon}+C\Vert f\Vert_{L^\infty_{x,\zeta}}\frac{|y-Y_0|}{|\zeta|}\\&\leq C \left( 1+\frac1{|\zeta|} \right)|x-y|^{\frac12(1-\epsilon)}.  \end{split}\end{equation}

{\sc Step 2.}
In case  $\zeta\cdot n(Y_\perp)\geq0$, we define
 \begin{align}
& D_0=\{b\in \Omega| d(b,\partial\Omega)\geq 4|x-y| \},\\
& D_1=\{b\in \Omega| d(b,\partial\Omega)\geq 5|x-y| \}.
\end{align}
Notice that $D_0$ is also a smooth convex domain.
We denote $Y_0=p(y,\zeta)$ and $X_0=p(x,\zeta)$. If either $\overline{xX_0}$ or $\overline{yY_0}$ intersects $\partial D_0 $
 less than twice, we can conclude that \begin{equation} |x-X_0|\leq C|x-y|^\frac12,\ |y-Y_0|\leq C|x-y|^\frac12.\end{equation}
Therefore, we can show \eqref{fHolderBndy} holds  as we proved in Step 1.
Namely, if one of $\overline{xX_0}$ or $\overline{yY_0}$ intersects $\partial D_0 $
 less than twice, then both of $\overline{xX_0}$ and $\overline{yY_0}$ intersect $\partial D_1$
 less than twice. Hence, by the proof of Lemma~\ref{Atboundarysqrt}, we have 
 \begin{align}
 & |x-X_0|\leq C_1' (5|x-y|)^{\frac{1}{2}} \text{ and } \\
  & |y-Y_0| \leq C_1' (5|x-y|)^{\frac{1}{2}}.
   \end{align}
 This implies \eqref{e94}, and  hence \eqref{fHolderBndy} holds.
   
Next, we shall discuss the case that both line segments $\overline{xX_0}$ and $\overline{yY_0}$  intersect $\partial D_0$ twice.
Let $X_1$ and $X_2$ be intersection points of $\overline{xX_0}$ and  $\partial D_0$ in the order $x,$ $X_1,$ $X_2$, and $X_0$ on $\overline{xX_0}$. Also let $Y_1$ and $Y_2$ be intersection points of $\overline{yY_0}$ and $\partial D_0$ respectively. Let $X_{1\perp}$ be the unique projection of $X_1$ on $\partial \Omega$. Since $|X_1-X_{1\perp}|$ realizes the distance between $\partial \Omega$ and $\partial D_0$,   we have
 \begin{equation}n(X_{1\perp})=n(X_1)=\frac{(X_{1\perp}-X_1)}{|X_{1\perp}-X_1|}.\end{equation} Taking the tangent plane of $\partial D_0$ at $X_1$ into consideration and applying
Lemma~\ref{Atboundarysqrt}, we have
\begin{equation}
|X_1-x|\leq 4 C_1'|x-y|^{\frac12},\label{smallsqrt2}\\
\end{equation}
By the same fashion, we have
\begin{align}
|X_0-X_2|&\leq 4 C_1'|x-y|^{\frac12}.\label{smallsqrt1}\\
 |Y_0-Y_2|&\leq 4 C_1'|x-y|^{\frac12},\label{smallsqrt3}\\
  |y-Y_1|&\leq 4 C_1'|x-y|^{\frac12}.\label{smallsqrt4}\end{align}
In what follows, we shall discuss  estimates for the differences of $I$, $II$,  and $III$ defined on \eqref{def123}.  

Regarding the estimate for $I$, we start with the following claim.
\begin{align}
&|X_0-Y_0|\leq C|x-y|^{\frac12},\label{sqrtB1}\\&\big||X_0-x|-|Y_0-y|\big|\leq C|x-y|^{\frac12}\label{sqrtB2}.
\end{align}  
Without loss of generality, we may assume $|Y_0-y|<|X_0-x|$. Let $X'=Y_0+(x-y)$. Notice that since $|x-y| < 4|x-y|$ and $|Y_0-y|<|X_0-x|,$ we see that $X'\in \overline{X_2X_0}$.  The unique projection of $X'$ on $\partial\Omega$ is denoted by $X'_\perp$. Notice that by the convexity of the set  $\{b|d(b,\partial\Omega)\geq d_{X'}\}$, we have $n(X'_\perp)\cdot (X_0-X')\geq0$. Therefore, we can apply Lemma \ref{Atboundarysqrt} and prove the claim. 
Now, we have
\begin{equation}\begin{split}
&|I(x,\zeta)-I(y,\zeta)| = \left| f(X_0,\zeta)e^{-\nu(|\zeta|)\frac{|X_0-x|}{|\zeta|}}-f(Y_0,\zeta)e^{-\nu(|\zeta|)\frac{|Y_0-y|}{|\zeta|}} \right|\\&\leq \left| f(X_0,\zeta)e^{-\nu(|\zeta|)\frac{|X_0-x|}{|\zeta|}} \right| \left|1-e^{-\nu(|\zeta|)\frac{|Y_0-y|-|X_0-x|}{|\zeta|}}\right|\\&\quad+|f(X_0,\zeta)-f(Y_0,\zeta)|e^{-\nu(|\zeta|)\frac{|Y_0-y|}{|\zeta|}}\\&\leq C \left( 1+\frac1{|\zeta|} \right)|x-y|^{\frac{1}{2}(1-\epsilon)}.
\end{split}
\end{equation}

Similarly, for $III$, without loss of generality we may assume $|Y_0-y|\leq |X_0-x|$. By applying Proposition~\ref{HolderGln}, we see that
\begin{equation}
\begin{split}
&|III(x,\zeta)-III(y,\zeta)|\leq \left|\int_0^{\frac{|Y_0-y|}{|\zeta|}}e^{-\nu(|\zeta|)s}|G(x-s\zeta ,\zeta)ds-G(y-s\zeta ,\zeta)|ds\right|\\&\quad+\left|\int_{\frac{|Y-y|}{|\zeta|}}^{\frac{|X_0-x|}{|\zeta|}}e^{-\nu(|\zeta|)s}G(x-s\zeta ,\zeta)ds\right|\\&\leq 
C\Vert f\Vert_{L^\infty_{x,\zeta}}\left[\int_0^{\frac{|Y_0-y|}{|\zeta|}} e^{-\nu(|\zeta|)s} |x-y|^{1-\epsilon}ds+\frac{\big||X_0-x|-|Y_0-y|\big|}{|\zeta|}\right]\\& \leq C \left( 1+\frac1{|\zeta|} \right)|x-y|^{\frac12}.
\end{split}
\end{equation}

To estimate the difference of  $II$, thanks to $X'=Y_0+(x-y) \in  \overline{X_2X_0} $,
we may express\begin{equation}\begin{split}
&|II(x,\zeta)-II(y,\zeta)|=\bigg|\int_0^{\tau_-(x,\zeta)}e^{-\nu(|\zeta|)t}\int_{\mathbb{R}^3}k(\zeta,\zeta')I(x-t \zeta,\zeta') d\zeta'dt
\\&-\int_0^{\tau_-(y,\zeta)}e^{-\nu(|\zeta|)t}\int_{\mathbb{R}^3}k(\zeta,\zeta')I(y-t \zeta,\zeta') d\zeta'dt\bigg|\\&\leq \int_0^{\frac{|x-X_1|}{|\zeta|}}e^{-\nu(|\zeta|)t}\int_{\mathbb{R}^3}|k(\zeta,\zeta')||I(x-t \zeta,\zeta')|d\zeta'dt\\&\quad+\int_0^{\frac{|x-X_1|}{|\zeta|}}e^{-\nu(|\zeta|)t}\int_{\mathbb{R}^3}|k(\zeta,\zeta')||I(y-t \zeta,\zeta')|d\zeta'dt
\\&\quad+
\int_{\frac{|x-X_1|}{|\zeta|}}^{\frac{|x-X_2|}{|\zeta|}}e^{-\nu(|\zeta|)t}\int_{\mathbb{R}^3}|k(\zeta,\zeta')||I(x-t \zeta,\zeta') -I(y-t\zeta,\zeta')|d\zeta'dt\\&\quad+\int_{\frac{|x-X_2|}{|\zeta|}}^{\frac{|x-X_0|}{|\zeta|}}e^{-\nu(|\zeta|)t}\int_{\mathbb{R}^3}|k(\zeta,\zeta')||I(x-t \zeta,\zeta')|d\zeta'dt\\&\quad+\int_{\frac{|x-X_2|}{|\zeta|}}^{\frac{|y-Y_0|}{|\zeta|}}e^{-\nu(|\zeta|)t}\int_{\mathbb{R}^3}|k(\zeta,\zeta')||I(y-t \zeta,\zeta')|d\zeta'dt\\&=:DII_1+DII_2+DII_3+DII_4+DII_5.\end{split}\end{equation}
Notice that \begin{equation}
DII_1+DII_2+DII_4+DII_5\leq C\frac{|x-y|^{\frac12}}{|\zeta|}
\end{equation}
because of smallness of the domain of integration, \eqref{smallsqrt1}-\eqref{smallsqrt4}.
We are now focus on $DII_3$. 
Let $\hat{\zeta}=\frac{\zeta}{|\zeta|}$. 
We can rewrite
\begin{equation}
DII_3=\int_{|x-X_1|}^{{|x-X_2|}}\frac{1}{|\zeta|}e^{-\frac{\nu(|\zeta|)}{|\zeta|}r}\int_{\mathbb{R}^3}|k(\zeta,\zeta')||I(x-r\hat{\zeta},\zeta') -I(y-r\hat{\zeta},\zeta')|d\zeta'dr
\end{equation}

 Let $z(r)=x-r\hat{\zeta}$ and  $z'(r)=y-r\hat{\zeta}$.  Notice that since $X'=Y_0+(x-y)$ lies on $\overline{X_2X_0}$, we see that  $z'(r)=y-r\hat{\zeta} \in \overline{y Y_0}$  for $ |x-X_1|\leq r\leq |x-X_2|$ and

\begin{equation}
d_{z'}\geq \frac12d_z+(\frac12d_z-|z-z'|)\geq \frac12d_z \geq 2|x-y|.
\end{equation}

Inferring from \eqref{kdIest}, we have\begin{equation}
\int_{\mathbb{R}^3}|k(\zeta,\zeta')||I(z(r),\zeta) -I(z'(r),\zeta)|d\zeta'\leq C \left( 1+d_{z(r)}^{-1} \right)^{(\frac13+\epsilon')}|x-y|^{1-\epsilon},
\end{equation}
for $0<\epsilon'<1$.
Let  $Z$ be the midpoint of $X_1$ and $X_2$. Notice that $d_Z\geq 4|x-y|$. 

Therefore,
\begin{equation}\begin{split}
DII_3&\leq C \int_{|X_1-x|}^{|x-Z|}\frac{|x-y|^{1-\epsilon}}{|\zeta|}(1+|x-y|r)^{-(\frac13+\epsilon')}\,dr\\&+C\int_{|X_2-X_0|}^{|X_0-Z|}\frac{|x-y|^{1-\epsilon}}{|\zeta|}(1+|x-y|r)^{-(\frac13+\epsilon')}\,dr\\&\leq C\frac{|x-y|^{\frac23-\epsilon-\epsilon'}}{|\zeta|}\end{split}
\end{equation}
Choosing small enough $\epsilon'$, we can conclude the lemma.

\end{proof}
\section{Differentiability of boundary flux \label{SDiffDf}}
Let $g(t)$ be a normal geodesic on $\partial \Omega$ such that \begin{align}
&g(0)=x,\\&g'(0)=v,\end{align}where $v\in T_x (\partial\Omega)$

Recall  that by definition
\begin{equation}
\nabla^x_{v}D_f(x)=\left.\frac{d}{d t}D_f(g(t))\right|_{t=0},
\end{equation}
where
\begin{equation}
\label{Dfg}
\begin{split}
&D_f(g(t))=2\pi^{-\frac14}\int_0^\infty\int_{\Omega}\int_{\mathbb{R}^3}e^{-\frac{\nu(\rho)}\rho|g(t)-y|}k \left( \rho\frac{g(t)-y}{|g(t)-y|},\zeta' \right)\frac{(g(t)-y)\cdot n(g(t))}{|g(t)-y|^3}\\&\quad\quad\quad\quad\quad\quad e^{-\frac{\rho^2}2}\rho^2f(y,\zeta')d\zeta'dyd\rho.
\end{split}\end{equation} 
We will devote this section to prove the following lemma. 

\begin{lem} \label{Dfdiff}
There exists a constant $C$ such that 
 for any  $v\in T_x\partial \Omega$ with $|v|=1$, we have
\begin{equation}
|\nabla^x_{v}D_f(x)|\leq C.\end{equation}
\end{lem}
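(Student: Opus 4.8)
The plan is to differentiate the representation \eqref{Dfg} formally in $t$ and then show that the resulting integral is absolutely convergent, uniformly in $x$. Writing $h(t) = g(t) - y$, the $t$-derivative of the integrand produces four types of terms: (a) the derivative of $e^{-\frac{\nu(\rho)}{\rho}|h(t)|}$, which brings down a factor $\frac{\nu(\rho)}{\rho}\frac{h(t)\cdot g'(t)}{|h(t)|}$ that is harmless (bounded, and $\nu(\rho)/\rho \lesssim \rho^{\gamma-1}$ is integrated against the Gaussian $e^{-\rho^2/2}\rho^2$); (b) the derivative of $n(g(t))$, bounded by smoothness and compactness of $\partial\Omega$; (c) the derivative of $\frac{h(t)\cdot n(g(t))}{|h(t)|^3}$ coming from the numerator, which yields terms of order $|x-y|^{-2}$; and (d) the derivative hitting $|h(t)|^{-3}$ in the denominator, which yields the genuinely dangerous term of order $|x-y|^{-3}$. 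The $\zeta'$-derivative of $k\big(\rho\frac{h(t)}{|h(t)|},\zeta'\big)$ via the chain rule also contributes a term of order $|x-y|^{-2}$ after using \eqref{gradianK} and Proposition~\ref{cafdecay}; combined with the non-integrable factor $\rho\cdot\frac{1+\rho}{1}$ this is still controlled by the Gaussian weight. So the only real issue is the term with a $|x-y|^{-3}$ singularity.

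To handle term (d), I would exploit the geometric cancellation: the offending factor is multiplied by $h(t)\cdot n(g(t)) = (g(t)-y)\cdot n(g(t))$. For $y$ near $\partial\Omega$ close to $g(t)$ this quantity is $O(|g(t)-y|^2)$ by Lemma~\ref{lemmaB} (estimate \eqref{lemmaB1}), so $\frac{(g(t)-y)\cdot n(g(t))}{|g(t)-y|^3}$ is only $O(|g(t)-y|^{-1})$, which is integrable over the three-dimensional set $\Omega$. However, this naive bound is not enough when $y$ is far from the boundary point $g(t)$: there $(g(t)-y)\cdot n(g(t))$ is genuinely of order $|g(t)-y|$ and we are left with a $|g(t)-y|^{-2}$ singularity, still integrable in $\mathbb R^3$ but only barely, and one also needs to worry about the behavior of $f(y,\zeta')$ — which is merely $L^\infty$ — near the boundary. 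The resolution, I expect, is to split $\Omega$ into the region $\{d_y \ge \delta_0\}$ and a boundary collar $\{d_y < \delta_0\}$: on the interior region the $|g(t)-y|^{-2}$ singularity is integrable and $f$ is just bounded, while on the collar one must use the refined boundary estimate \eqref{fHolderBndy} from Lemma~\ref{HolderBoundary} together with $(g(t)-y)\cdot n(g(t)) = O(|g(t)-y|^2)$ valid precisely when the projection of $y$ is close to $g(t)$ — and the case where the projection of $y$ is \emph{not} close to $g(t)$ is handled by the lower bound on $|g(t)-y|$ it forces. One integrates the one-dimensional profile in $d_y$ after peeling off the surface measure, using Lemma~\ref{lemmaA} to control $\int_{\partial\Omega}|g(t)-y'|^{-2}\,dA(y')$ against $|\ln d_{y'}|+1$.

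The main obstacle, therefore, is precisely this: making the $|x-y|^{-3}$ term integrable requires simultaneously using (i) the second-order vanishing $(g(t)-y)\cdot n(g(t)) = O(|g(t)-y|^2)$ from the convex geometry (Lemma~\ref{lemmaB}) to knock the singularity down to $|g(t)-y|^{-1}$ near the boundary, and (ii) the Hölder-in-space control of $f$ up to the boundary with the $(1+|\zeta'|^{-1})$ loss (Lemma~\ref{HolderBoundary}) to absorb the remaining $|\zeta'|^{-1}$ against the Gaussian $e^{-\rho^2/2}$ and the kernel decay. The interplay is delicate — this is the "very narrow window" mentioned in the introduction — and one must be careful that the change of variables $r = \rho t$, $y = x - r\hat\zeta'$ used to derive \eqref{Dfg} is compatible with differentiating along the geodesic $g(t)$ rather than along a straight line. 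Once the absolute convergence of the differentiated integral is established uniformly in $x$, dominated convergence justifies the interchange of $\frac{d}{dt}$ and the integral, and the uniform bound $|\nabla^x_v D_f(x)| \le C$ follows, completing the proof of Lemma~\ref{Dfdiff}.
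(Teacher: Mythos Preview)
Your identification of the $|x-y|^{-3}$ term as the only genuine obstacle is correct, and you are right that Lemma~\ref{HolderBoundary} is the tool that supplies the missing regularity. But the way you propose to use it has a gap. You invoke Lemma~\ref{lemmaB} to claim that $(g(t)-y)\cdot n(g(t)) = O(|g(t)-y|^2)$ for $y$ in a boundary collar near $g(t)$, but Lemma~\ref{lemmaB} requires \emph{both} points to lie on $\partial\Omega$. For interior points $y\in\Omega$ this estimate is simply false: take $y = g(t) - s\, n(g(t))$ (moving inward along the normal) to get $(g(t)-y)\cdot n(g(t)) = s = |g(t)-y|$, which is first order, not second. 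Note that here the projection of $y$ onto $\partial\Omega$ is exactly $g(t)$, so your qualifier ``valid precisely when the projection of $y$ is close to $g(t)$'' does not help. Thus the geometric cancellation you rely on is unavailable over the volume integral, and even after using \eqref{fHolderBndy} on the $f$-difference you are left with a constant-in-$y$ piece carrying an honest $|x-y|^{-3}$ singularity, which diverges logarithmically near $y=x$.

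The paper resolves this by a mechanism you do not mention. After subtracting and adding $f(x,\zeta')$, the difference $f(y,\zeta')-f(x,\zeta')$ is handled exactly as you suggest, via Lemma~\ref{HolderBoundary} (this is the term $\nabla^x D_f^1$). For the remaining piece with $f(x,\zeta')$, now constant in $y$, one exploits that the $x$-gradient of a function of $x-y$ equals minus its $y$-gradient; after peeling off the harmless $\nabla^x_v n(x)$ contribution, the integrand becomes an exact $y$-divergence. The divergence theorem then converts the volume integral over $\Omega$ into a surface integral over $\partial\Omega$ (plus a finite residue on a shrinking sphere $\partial B(x,\epsilon)\cap\Omega$), and it is \emph{only on the surface} $\partial\Omega$ that Lemma~\ref{lemmaB} legitimately applies, providing $(x-y)\cdot n(x) = O(|x-y|^2)$ and $v\cdot n(y) = O(|x-y|)$ to make the surface integral absolutely convergent. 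Your proposal is missing this divergence-theorem step, and without it the argument cannot close.
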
 If we differentiate the formula \eqref{Dfg} directly, 
we will obtain a singularity of  $|x-y|^{-3}$, which is barely not integrable in $\Omega$. However, by subtracting and adding $f(x,\zeta')$,  we can use the  local H\"{o}lder continuity in Lemma \ref{HolderBoundary} to make it integrable. For the additional term we introduced, we observe that we can rewrite all the terms except the term with derivative of $n(x)$ as a divergence form and apply the divergence theorem.  More precisely, we   can write

\begin{equation}\begin{split}
&2^{-1} \pi^{\frac14} \nabla^x_v D_f(x)=\int_0^\infty\int_{\Omega}\int_{\mathbb{R}^3} \nabla^x_v\left(e^{-\frac{\nu(\rho)}{\rho}|x-y|}k \left( \rho\frac{x-y}{|x-y|},\zeta' \right)\frac{(x-y)\cdot n(x)}{|x-y|^3}\right)\\
&\quad\cdot\left[f(y,\zeta')-f(x,\zeta')\right]e^{-\frac{\rho^2}{2}}\rho^2d\zeta'dyd\rho\\
&-\int_0^\infty\int_{\mathbb{R}^3}\int_{\Omega}\mbox{div}_y\left( \big[ e^{-\frac{\nu(\rho)}{\rho}|x-y|}k \left( \rho\frac{x-y}{|x-y|},\zeta' \right)\frac{(x-y)\cdot n(x)}{|x-y|^3}e^{-\frac{\rho^2}{2}}\rho^2f(x,\zeta) \big] v\right)
 \\
 &\quad dyd\zeta d\rho\\&+\int_0^\infty\int_{\mathbb{R}^3}\int_{\Omega}e^{-\frac{\nu(\rho)}{\rho}|x-y|} k \left(\rho\frac{x-y}{|x-y|},\zeta' \right)\frac{(x-y)\cdot\nabla^x_v(n(x))}{|x-y|^3}e^{-\frac{\rho^2}2}\rho^2f(x,\zeta')dyd\zeta'd\rho
 \\
 &
=:\nabla^x D_f^1+\nabla^x D_f^2+\nabla^x D_f^3.
\end{split}\end{equation}
Notice that since $\partial\Omega $ is smooth, we can see that $\nabla^xD_f^3$ is bounded.
Regarding $\nabla^x D_f^1$, by direct calculation, we have
\begin{equation}\begin{split}
\\&  \nabla^x_vD_f^1=\Big|\int_0^\infty\int_{\mathbb{R}^3}\int_{\Omega}e^{-\frac{\nu(\rho)}{\rho}|x-y|}k \left(\rho\frac{x-y}{|x-y|},\zeta' \right)e^{-\frac{\rho^2}{2}}\rho^2[f(y,\zeta')-f(x,\zeta')]\\&\quad \cdot\Big[-\frac{\nu(\rho)}{\rho}\frac{(x-y)\cdot v}{|x-y|}\frac{(x-y)\cdot n(x)}{|x-y|^3}+\frac{v\cdot n(x)}{|x-y|^3}+\frac{(x-y)\cdot\nabla^x_vn(x)}{|x-y|^3}\\&\quad-3\frac{(x-y)\cdot n(x)}{|x-y|^4}\frac{(x-y)\cdot v}{|x-y|}\Big]
\\&+[f(y,\zeta')-f(x,\zeta')]e^{-\frac{\nu(\rho)}{\rho}|x-y|}\frac{(x-y)\cdot n(x)}{|x-y|^3}\\&\quad\cdot e^{-\frac{\rho^2}2}\rho^3\left(\mbox{grad}_\zeta k \left( \rho\frac{x-y}{|x-y|},\zeta' \right)\right) \cdot \left( \frac{v}{|x-y|}-\frac{(x-y)\cdot v}{|x-y|^3}(x-y) \right)dyd\zeta'd\rho\Big|\\ &\leq C \int_0^\infty\int_{\mathbb{R}^3}\int_{\Omega} \left\{\frac1{|\rho\frac{x-y}{|x-y|}-\zeta'|}e^{-\frac18|\rho\frac{x-y}{|x-y|}-\zeta'|^2} \left[\frac{1}{|x-y|^{\frac32+\frac{\epsilon}2}}+\frac1{|x-y|^{\frac52+\frac{\epsilon}2}} \right] \rho^2 \right. \\&+ \left. \frac{1+|\rho\frac{x-y}{|x-y|}|}{|\rho\frac{x-y}{|x-y|}-\zeta'|^2}e^{-\frac18|\rho\frac{x-y}{|x-y|}-\zeta'|^2} \frac{1}{|x-y|^{\frac52+\frac{\epsilon}2}}\rho^3 \right\} (1+|\zeta'|^{-1})e^{-\frac{\rho^2}{2}}dyd\zeta'd\rho\\ &\leq C\int_{\mathbb{R}^3}\int_{\mathbb{R}^3}\int_0^R\frac{1}{|\zeta-\zeta'|}e^{-\frac18|\zeta-\zeta'|^2} \left(1+\frac1{|\zeta'|} \right) e^{-\frac{|\zeta|^2}2}[r^{\frac12-{\frac{\epsilon}2}}+r^{-\frac12-\frac{\epsilon}2}]\\&\quad+\left(\frac{(1+|\zeta|)^2}{|\zeta-\zeta'|^2}+\frac{1+|\zeta|}{|\zeta-\zeta'||\zeta'|}\right)e^{-\frac18|\zeta-\zeta'|^2}e^{-\frac{|\zeta|^2}2}r^{-\frac12-\frac{\epsilon}2}drd\zeta'd\zeta\\&\leq C.
\end{split}\end{equation}
In the above derivation,  in addition to \eqref{estimateK} and \eqref{gradianK}, we have used the fact\begin{equation}e^{-\frac{\nu(\rho)}{\rho}|x-y|}\frac{\nu(\rho)}{\rho}\leq C|x-y|^{-1}\end{equation} and the triangle inequality $|\zeta|\leq |\zeta'|+|\zeta'-\zeta|$.
Next, we shall prove that $\nabla^x D_f^2$ is bounded in the senses of improper integral. We define $\Omega_\epsilon=\Omega\setminus B(x,\epsilon)$ and name the corresponding integral  $\nabla^x D_f^{2,\epsilon}$. Applying the divergence theory, we have
\begin{equation}
\begin{split}
\nabla^x D_f^{2,\epsilon}=&-\int_0^\infty\int_{\mathbb{R}^3}\int_{\partial\Omega\setminus B(x,\epsilon)}e^{-\frac{\nu(\rho)}{\rho}|x-y|} k \left(\rho\frac{x-y}{|x-y|},\zeta' \right)\frac{(x-y)\cdot n(x)}{|x-y|^3}\\
&\quad e^{-\frac{\rho^2}{2}}\rho^2f(x,\zeta')[v\cdot n(y) ]dA(y)d\zeta' d\rho\\
&-\int_0^\infty\int_{\mathbb{R}^3}\int_{\partial B(x,\epsilon) \cap \Omega}e^{-\frac{\nu(\rho)}{\rho}|x-y|} k \left(\rho\frac{x-y}{|x-y|},\zeta' \right)\frac{(x-y)\cdot n(x)}{|x-y|^3}\\
&\quad e^{-\frac{\rho^2}{2}}\rho^2f(x,\zeta') \left[ v\cdot \frac{x-y}{|x-y|} \right]dA(y)d\zeta' d\rho\\
&=:S^\epsilon+B^\epsilon.
\end{split}
\end{equation}

For $S^\epsilon$, we further break the domain of integration by $GB(x,r_1),$ where $r_1$ is as defined in Proposition~\ref{propositionA}. It is not hard to see that the integral outside $GB(x,r_1)$ is bounded.  Inside the $GB(x,r_1)$, by applying Lemma \ref{lemmaB}, we have 
\begin{equation}\begin{split}
&\Big|\int_0^\infty\int_{\mathbb{R}^3}\int_{GB(x,r_1)\setminus B(x,\epsilon)}e^{-\frac{\nu(\rho)}{\rho}|x-y|}k\left(\rho\frac{x-y}{|x-y|},\zeta' \right)\frac{(x-y)\cdot n(x)}{|x-y|^3}\\
&\quad e^{-\frac{\rho^2}{2}}\rho^2f(x,\zeta')[v\cdot n(y) ]dA(y)d\zeta' d\rho \Big|\\
&\leq C\Vert f\Vert_{L^\infty_{x,\zeta}} \int_0^\infty\int_{GB(x,r_1)}e^{-\frac{\rho^2}{2}}\rho^2dyd\rho\leq  C\Vert f\Vert_{L^\infty_{x,\zeta}} .
\end{split}\end{equation}

We are going to   deal with $B^{\epsilon}$ and will see that it  in fact forms a "residue".
We introduce  spherical coordinates on $B(x,\epsilon)$ so that $-n(x)$ is the north pole so that
 \begin{equation}\hat{\zeta}:=\frac{x-y}{|x-y|}=\sin\theta\cos\phi v+\sin\theta\sin\phi( n(x)\times v)+\cos\theta n(x).\end{equation}
We  use $D_\epsilon$ to denote the domain in the chart that maps to $\partial B(x,\epsilon)\cap  \Omega$.
Let 
 \begin{equation}D'_\epsilon:=\{\rho[\sin\theta\cos\phi v+\sin\theta\sin\phi( n(x)\times v)+\cos\theta n(x))]|(\theta,\phi)\in D_\epsilon, \rho>0\}.\end{equation}

We have

\begin{equation}
\begin{split}
 B^\epsilon=&-\int_0^\infty\int_{\mathbb{R}^3}\int_0^\pi\int_0^{2\pi}
\chi_{D_\epsilon}(\theta,\phi)e^{-\frac{\nu(\rho)}{\rho}\epsilon}k(\rho\hat{\zeta},\zeta')e^{-\frac{\rho^2}{2}}\rho^2f(x,\zeta')\cos\theta\sin^2\theta\cos\phi d\phi d\theta d\zeta'd\rho\\=&-\int_{\mathbb{R}^3}\int_{\mathbb{R}^3}
\chi_{D'_\epsilon}(\zeta)e^{-\frac{\nu(|\zeta|)}{|\zeta|}\epsilon}k(\zeta,\zeta')e^{-\frac{|\zeta|^2}{2}}f(x,\zeta')\frac{\zeta\cdot n(x)}{|\zeta|}\frac{\zeta\cdot v }{|\zeta|} d\zeta d\zeta'
\end{split}
\end{equation}
We can conclude from the dominated convergence theorem that
\begin{equation}
\lim_{\epsilon\to0}B^\epsilon=-\int_{\zeta\cdot n(x)\leq0}\int_{\mathbb{R}^3}k(\zeta,\zeta')e^{-\frac{|\zeta|^2}{2}}f(x,\zeta')\frac{\zeta\cdot n(x)}{|\zeta|}\frac{\zeta\cdot v }{|\zeta|} d\zeta d\zeta'
\end{equation}

We notice that \begin{equation}
\begin{split}
|\nabla^x D_f^{2,\epsilon}|&\leq C\Vert f\Vert_{L^\infty_{x,\zeta}}\int_0^\infty\int_{\Omega}\frac{1}{|x-y|^2}e^{-\frac{\rho^2}2}\rho^2dydd\rho
 \\&\leq  C\Vert f\Vert_{L^\infty_{x,\zeta}}.
\end{split}\end{equation}
We conclude the lemma.

\section{Differentiability of $f$ \label{SDiffVDF}}

The main result of this article is  summarized in the following lemma.
\begin{lem} Under the assumption of Theorem \ref{MainThm}, regarding the solution of \eqref{SBE}, we have the following estimates
\begin{align}
\left| \frac{\partial}{\partial x_i}f(x,\zeta) \right|&\leq C(1+d_x^{-1})^{\frac43+\epsilon}, \label{diffxv}\\
\label{diffzeta} \left| \frac{\partial}{\partial \zeta_i}f(x,\zeta) \right|&\leq C(1+d_x^{-1})^{\frac43+\epsilon}.
\end{align} 
\end{lem}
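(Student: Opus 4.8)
The strategy is to run a bootstrap on the a priori regularity class of $f$, feeding the refined H\"older estimates of Sections~\ref{HolderRV}--\ref{LocalHatBoundary} into the explicit decompositions established in Sections~\ref{SecDiffBpsi}--\ref{SDiffDf}. First I would record the starting point: by \cite{GuoKim} the solution satisfies $f\in L^\infty_{x,\zeta}$ with Gaussian decay in $\zeta$, i.e.\ there is $a,M_0>0$ with $|f(X,\zeta)|\le M_0 e^{-a|\zeta|^2}$ on $\Gamma_-$. Plugging this into \eqref{def123} and using Lemma~\ref{HolderIandII}, Proposition~\ref{HolderIII}, and Lemma~\ref{DFDO} (together with the boundary-flux bound coming from Lemma~\ref{difBpsi}), one obtains interior H\"older continuity of $f$ of order $1-\epsilon$ on $\Gamma_-$ with a constant blowing up like a negative power of $d_{x,y}$. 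Then I would iterate once more: feeding this H\"older bound back through \eqref{def123} and, crucially, through Lemma~\ref{HolderBoundary}, upgrades the boundary modulus of continuity to order $\tfrac12(1-\epsilon)$ with the $(1+|\zeta|^{-1})$ weight. This is exactly the hypothesis needed in Section~\ref{SDiffDf}.

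With that input in hand, I would assemble the derivative of $f$ by differentiating the integral form \eqref{inteq1} term by term along a normal geodesic on $\partial\Omega$ (for the boundary piece) and along $x$ directly (for the $D_f$ piece). The first term on the right of \eqref{inteq1} is handled by Lemma~\ref{difBpsi}: once $T$ and $\psi$ are bounded, $\nabla^x_v B_T$ and $\nabla^x_v B_\psi$ are bounded, using the alternative representation \eqref{bpsi2} in which the domain of integration is fixed and $\psi$ appears undifferentiated; the transport factor $e^{-\nu\tau_-}$ and the composition $\psi\circ p$ contribute only $d_x$-powers through the geometry of Proposition~\ref{pointtobd} and Proposition~\ref{Propdifference}. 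The integral term is $D_f$, and here I would invoke Lemma~\ref{Dfdiff}: the formal derivative produces a $|x-y|^{-3}$ singularity, which is rescued by subtracting and adding $f(x,\zeta')$ so that $|f(y,\zeta')-f(x,\zeta')|\le C(1+|\zeta'|^{-1})|x-y|^{\frac12(1-\epsilon)}$ (Lemma~\ref{HolderBoundary}) kills enough of the singularity to restore integrability, while the leftover non-H\"older term is written in divergence form and the divergence theorem converts the volume integral into a boundary integral plus a harmless residue from the small sphere $\partial B(x,\epsilon)$.

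The $\zeta$-derivative (the statement \eqref{diffzeta}, carried out in Section~\ref{zetaDiff}) I would treat analogously, using the smoothing of $K$ in velocity, Proposition in Section~\ref{collisionoperator}: $\|\partial_{\zeta_i}K(f)\|_{L^\infty_\zeta}\le C\|f\|_{L^\infty_\zeta}$, together with $\partial_{\zeta_i}\tau_-$ and $\partial_{\zeta_i}p$ estimates, which again carry only $d_x$-powers. Collecting all the $d_x$-dependences — the worst one comes from the interior H\"older constant $d_{x,y}^{-1/3}$ in \eqref{kdIest} being iterated and from the $d_x^{-1}$ in \eqref{Iinfest}/\eqref{dx} entering the bootstrap — and optimizing the splitting radii and the exponents $\epsilon,\epsilon'$, one arrives at the stated rate $(1+d_x^{-1})^{\frac43+\epsilon}$. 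The main obstacle, as the authors emphasize, is the extremely narrow integrability window: the divergence-theorem argument in Lemma~\ref{Dfdiff} only works because the H\"older exponent from Lemma~\ref{HolderBoundary} is just barely large enough to beat the $|x-y|^{-3}$ singularity after the weight $(1+|\zeta'|^{-1})$ is absorbed, so all the refinements in Sections~\ref{HolderRV}--\ref{LocalHatBoundary} must be invoked at full strength and the bookkeeping of $d_x$-powers through the two-step bootstrap must be done carefully to land at $\tfrac43+\epsilon$ rather than something worse.
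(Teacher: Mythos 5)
Your outline captures the right ingredients on the boundary side (the representation~\eqref{bpsi2}, Lemma~\ref{difBpsi}, Lemma~\ref{Dfdiff}, the H\"older refinements of Sections~\ref{HolderRV}--\ref{LocalHatBoundary}), but the step where you ``assemble the derivative of $f$ by differentiating the integral form~\eqref{inteq1} term by term'' does not work as written and is where the proof actually lives. If you differentiate~\eqref{inteq1} directly, the interior integral $\int_0^{\tau_-}e^{-\nu s}K(f)(x-s\zeta,\zeta)\,ds$ produces $\partial_{x_i}K(f)$, which requires $\partial_{x_i}f$ itself — a circularity. This is precisely why the paper iterates once more to~\eqref{def123} and differentiates $I+II+III$: only the twice-iterated form exposes the mixing operator $G$, whose velocity variable can be traded for a space variable so that the derivative lands on a genuinely integrable kernel.

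Concretely, the interior assembly you omit is Lemma~\ref{DiffIandII} together with the $G$-differentiability lemma in Section~\ref{SDiffVDF}. Once the boundary data is bounded differentiable (which your boundary-flux analysis supplies via $\psi=B_T+B_\psi+D_f$ and the differentiability of $T$), Lemma~\ref{DiffIandII} propagates this along characteristics to give $|\partial_{x_i}I|\le Cd_x^{-1}e^{-a|\zeta|^2/2}$ and $|\partial_{x_i}II|\le C(1+d_x^{-1})^{4/3+\epsilon'}$, using Proposition~\ref{Propdifference}, Proposition~\ref{pointtobd}, Proposition~\ref{Hprop}, and Lemma~\ref{lemmaA}. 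These, combined with Proposition~\ref{HolderIII}, give the refined interior H\"older bound~\eqref{fpointHolder}, which is the hypothesis for the $G$-lemma: the ``subtract-and-add $f(x,\eta)$ then apply the divergence theorem'' trick that you correctly describe for $D_f$ has to be run a second time on $G$ itself (not on $D_f$) to control $\partial_{x_i}G$, and then $\partial_{x_i}III$ follows by one further application of Proposition~\ref{pointtobd}. Without this two-layer structure (boundary derivatives feeding $I,II$; interior H\"older feeding $\partial_x G$ feeding $III$), the exponent $\tfrac43+\epsilon$ cannot be traced, and the derivative estimate cannot even be set up without circularity. The $\zeta$-derivative suffers from the same omission: $D_v^5$ in Section~\ref{zetaDiff} requires the already-established bound on $\partial_{x_i}K(f)$, which again comes from the interior $x$-derivative machinery, not just from the velocity smoothing $\|\partial_{\zeta_i}K(f)\|_{L^\infty_\zeta}\le C\|f\|_{L^\infty_\zeta}$.
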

This section is devoted to the proof of \eqref{diffxv}. 
We leave the proof of \eqref{diffzeta} to the next section. 
In view of \eqref{def123},  to prove \eqref{diffxv}, we shall proceed the estimates of $\frac{\partial}{\partial x_i} I(x,\zeta)$, $\frac{\partial}{\partial x_i} II(x, \zeta)$ and $\frac{\partial}{\partial x_i} III (x, \zeta)$ respectively.
We  first show that $I$ and $II$ defined in \eqref{def123}  preserve the regularity from boundary.  
 \begin{lem}\label{DiffIandII} Let $\Omega$ be the domain introduced before. 
Suppose  there exist constants $a,\  M>0$ such that
  \begin{align}
\label{fboundary1}  &|\nabla^x_{\eta}f(X,\zeta)|\leq M|\eta|e^{-a|\zeta|^2},\\
\label{fboundary2}  &|f(X,\zeta)|\leq Me^{-a|\zeta|^2},
  \end{align} 
  for all $(X,\zeta)\in\Gamma_-$ and $\eta \in T_X (\partial\Omega)$.
  Then, for $x\in\Omega$, the following estimates hold 
  \begin{align}
  &\left| \frac{\partial}{\partial x_i}I(x,\zeta) \right|\leq C d_x^{-1} e^{-\frac{a}{2}|\zeta|^2},\label{1infest}\\
  &\left| \frac{\partial}{\partial x_i}II(x,\zeta) \right|\leq  C(1+d_x^{-1})^{\frac43+\epsilon'}, \label{IIdiffest} \\
  &\left| \frac{\partial}{\partial x_i} II(x,\zeta) \right|\leq  C(1+d_x^{-1})^{\frac13+\epsilon'} \left( 1+\frac1{|\zeta|} \right).\label{II1overzeta}\end{align} 
\end{lem}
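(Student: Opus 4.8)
The plan is to treat the three estimates in increasing order of difficulty: \eqref{1infest} for $I$ first, then the two estimates \eqref{IIdiffest} and \eqref{II1overzeta} for $II$, which follow from \eqref{1infest} by one more application of the transport--collision structure together with the velocity-averaging change of variables already used in Sections~\ref{SecDiffBpsi} and \ref{HolderRV}.

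\emph{Step 1: the estimate for $I$.} Recall $I(x,\zeta)=f(p(x,\zeta),\zeta)e^{-\nu(|\zeta|)\tau_-(x,\zeta)}$. The plan is to differentiate this in $x_i$ directly. The derivative of $\tau_-(x,\zeta)$ and of the foot point $p(x,\zeta)=x-\tau_-(x,\zeta)\zeta$ in $x$ are controlled, by a standard implicit-function-theorem computation using $n(p(x,\zeta))\cdot(x-p(x,\zeta))\ne 0$, with the bound $|\nabla_x\tau_-|\le C/N(x,\zeta)|\zeta|$ and the tangential displacement on $\partial\Omega$ bounded by $C/N(x,\zeta)$; this is the infinitesimal version of Proposition~\ref{Propdifference}. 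Feeding these into $\nabla^x_\eta f$ via \eqref{fboundary1}, together with $|x-p(x,\zeta)|\ge d_x/N(x,\zeta)$ from \eqref{prop13} and the exponential factor $e^{-\nu_0 d_x/(N(x,\zeta)|\zeta|)}$, one absorbs every negative power of $N(x,\zeta)$ against the exponential at the cost of positive powers of $|\zeta|/d_x$, exactly as in the chain of inequalities \eqref{Ixy}. This gives $|\partial_{x_i}I|\le C d_x^{-1}e^{-\frac a2|\zeta|^2}$, and keeping the sharper intermediate line yields also the $N$-weighted bound $|\partial_{x_i}I|\le C\big(N(x,\zeta)^{-1}+N(x,\zeta)^{-1}|\zeta|^{-1}\big)e^{-a|\zeta|^2}$, which is the analogue of \eqref{Ixy2} and is what is needed in Step~2.

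\emph{Step 2: the estimates for $II$.} Write, as in \eqref{def123}, $II(x,\zeta)=\int_0^{\tau_-(x,\zeta)}\int_{\mathbb R^3}e^{-\nu(|\zeta|)s}k(\zeta,\zeta')e^{-\nu(|\zeta'|)\tau_-(x-s\zeta,\zeta')}f(p(x-s\zeta,\zeta'),\zeta')\,d\zeta'\,ds=\int_0^{\tau_-(x,\zeta)}e^{-\nu(|\zeta|)s}\big(KI\big)(x-s\zeta,\zeta)\,ds$, where $(KI)(z,\zeta)=\int k(\zeta,\zeta')I(z,\zeta')\,d\zeta'$. Differentiating in $x_i$ produces a boundary term from the upper limit $\tau_-(x,\zeta)$ — which is of the type already handled for $I$ and is bounded by $C d_x^{-1}$ — plus $\int_0^{\tau_-(x,\zeta)}e^{-\nu(|\zeta|)s}\,\partial_{x_i}\big[(KI)(x-s\zeta,\zeta)\big]\,ds$. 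For the integrand one uses $\partial_{x_i}(KI)(z,\zeta)=\int k(\zeta,\zeta')\,\partial_{z_i}I(z,\zeta')\,d\zeta'$ and inserts the $N$-weighted bound for $\partial_{z_i}I$ from Step~1. At this point one repeats verbatim the splitting $\mathbb R^3=B_0\cup B_0^c$ with $|\zeta-\zeta'|<d_{z}^{1/3}$ and the spherical-to-surface change of variables of Lemma~\ref{HolderIandII}: the $B_0$ part costs $d_z^{-1/3}$, and on $B_0^c$ the weights $N(z,\zeta')^{-1}$ are converted, through $\zeta'=l(z-w)$ with $w\in\partial\Omega$, into surface integrals of $|z-w|^{-2}$, which are $O(|\ln d_z|)$ by Lemma~\ref{lemmaA}. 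This yields $|\partial_{z_i}(KI)(z,\zeta)|\le C(1+d_z^{-1})^{1/3+\epsilon'}$, and also the $\zeta$-dependent refinement $\le C(1+d_z^{-1})^{1/3+\epsilon'}(1+|\zeta|^{-1})$ when one does not throw away the $|\zeta'|$ that appears in $N(z,\zeta')^{-1}|\zeta'|^{-1}$ and instead tracks it through $K$. Finally one integrates over $s\in(0,\tau_-(x,\zeta))$; along the ray $z=x-s\zeta$ one has $d_z\ge \frac{d_x}{R}\,|z-p(x,\zeta)|$ by Proposition~\ref{pointtobd}, so $\int_0^{\tau_-(x,\zeta)}(1+d_z^{-1})^{1/3+\epsilon'}\,ds$ converges and contributes one more power, raising $1/3$ to $4/3$, which gives \eqref{IIdiffest}; retaining the $(1+|\zeta|^{-1})$ factor in the version that keeps $1/3$ gives \eqref{II1overzeta}.

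\emph{Main obstacle.} The delicate point is the bookkeeping at the interface between the velocity integral and the surface integral in Step~2: one must choose the radius $d_z^{1/3}$ of $B_0$ so that the $B_0$ contribution $d_z^{-1/3}$ and the $B_0^c$ contribution (carrying an extra $d_z^{-1/3}$ from the prefactor together with the logarithm from Lemma~\ref{lemmaA}) balance to the claimed power $\frac13+\epsilon'$, and then verify that after the final $s$-integration — using Proposition~\ref{pointtobd} to lower-bound $d_z$ along the ray — the accumulated singularity is exactly $(1+d_x^{-1})^{4/3+\epsilon'}$ and no worse. Getting this exponent sharp (rather than a crude $2$) is precisely the refinement over \cite{RegularChen} that makes the bootstrap in Section~\ref{SDiffVDF} possible, so the arithmetic of the exponents, not any single estimate, is where the care is needed.
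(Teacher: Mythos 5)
Your overall plan is the same as the paper's: differentiate $I$ directly using the infinitesimal version of Proposition~\ref{Propdifference} (the paper's Proposition~\ref{Dtau}), then write $\partial_{x_i}II=II^A+II^B$ with a boundary term and an interior term involving $\partial_{x_i}H$ where $H(x,\zeta)=\int k(\zeta,\zeta')I(x,\zeta')\,d\zeta'$ (your $KI$), estimate $\partial_{x_i}H$ via the $B_0 / B_0^c$ split with radius $d_x^{1/3}$ plus the velocity--surface change of variables and Lemma~\ref{lemmaA}, and finally integrate in $s$ using Proposition~\ref{pointtobd}. Steps~1 and the $\frac43+\epsilon'$ estimate in Step~2 follow the paper's proof essentially verbatim, including the bookkeeping of exponents.

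However, your derivation of \eqref{II1overzeta} is wrong, and it would not close as written. You attribute the $(1+|\zeta|^{-1})$ factor to ``not throwing away the $|\zeta'|$ that appears in $N(z,\zeta')^{-1}|\zeta'|^{-1}$ and tracking it through $K$'' in the estimate of $\partial_{z_i}(KI)(z,\zeta)$. This cannot produce a factor $|\zeta|^{-1}$: after the change of variables $\zeta'=l(z-w)$ on $B_0^c$, the $|\zeta'|^{-1}$ becomes $(l|z-w|)^{-1}$, which is then integrated against $e^{-al^2|z-w|^2}$ in $l$ and against $|z-w|^{-2}$ on $\partial\Omega$, yielding something independent of $\zeta$; indeed the paper's Proposition~\ref{Hprop} gives a bound on $\partial_{x_i}H$ that has no $\zeta$-dependence at all. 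The $(1+|\zeta|^{-1})$ in \eqref{II1overzeta} actually comes from the $s$-integration in $II^A$: after changing variables to $r=|\zeta|s$ one faces $\int e^{-\nu(|\zeta|)r/|\zeta|}\,d_{x-r\hat\zeta}^{-(1/3+\epsilon')}\,\frac{dr}{|\zeta|}$, and one bounds $e^{-\nu(|\zeta|)r/|\zeta|}/|\zeta|$ either by $C/r$ (which, with $r\ge d_x/2$, yields the extra $d_x^{-1}$ and gives \eqref{IIdiffest}) or by $1/|\zeta|$ (which keeps the exponent at $\frac13+\epsilon'$ and gives \eqref{II1overzeta}). You identified the first mechanism but not the second; as stated, your plan would leave \eqref{II1overzeta} unproved.

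Two smaller points. First, your phrase that the $s$-integration ``converges and contributes one more power'' is imprecise --- the extra $d_x^{-1}$ is supplied by the $1/r\le 2/d_x$ bound on the exponential prefactor, not by the convergence of $\int (l-r)^{-(1/3+\epsilon')}dr$, which only gives a constant. Second, make sure you account for the piece $r\in[0,d_x/2]$ separately (where $d_{x-r\hat\zeta}\ge d_x/2$ and $\int_0^\infty e^{-\nu s}ds\le C$), because $1/r$ is not integrable at $r=0$; the paper's display for $II^A$ does this explicitly.
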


To prove Lemma~\ref{DiffIandII}, we have the following observation.
\begin{prop} \label{Dtau}
Let $\tau_-(x,\zeta)$ and $p(x, \zeta)$ be as defined in \eqref{deftauminus} and \eqref{defpp}. Then
\begin{align}
\label{ddd1} |\frac{\partial}{\partial x_i}p(x,\zeta)|&\leq \frac{1}{N(x,\zeta)},\\
\label{ddd2}|\frac{\partial}{\partial x_i}\tau_-(x,\zeta)|&\leq \frac{2}{N(x,\zeta)|\zeta|},\\
\label{ddd3}\tau_-(x,\zeta)&\geq \frac{d_x}{N(x,\zeta)|\zeta|}.
\end{align}
\end{prop}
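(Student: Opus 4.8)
The plan is to deduce \eqref{ddd3} directly from Proposition~\ref{Propdifference}, and to obtain \eqref{ddd1}--\eqref{ddd2} by first checking that $\tau_-(\cdot,\zeta)$ and $p(\cdot,\zeta)$ are differentiable and then bounding their derivatives through the difference quotient estimates already established in Proposition~\ref{Propdifference}. For \eqref{ddd3}, since $p(x,\zeta)=x-\tau_-(x,\zeta)\zeta$ we have $|x-p(x,\zeta)|=\tau_-(x,\zeta)|\zeta|$, while \eqref{prop13} gives $|x-p(x,\zeta)|\geq d_x/N(x,\zeta)$; combining these yields \eqref{ddd3} at once.

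For \eqref{ddd1}--\eqref{ddd2}, fix an interior point $x$ and $\zeta\neq0$. The ray $\{x-t\zeta:t\geq0\}$ meets $\partial\Omega$ at the point $p:=p(x,\zeta)$, and it is transversal there: if $n(p)\cdot\zeta=0$, then $x=p+\tau_-(x,\zeta)\zeta$ would lie in the supporting hyperplane $T_p(\partial\Omega)$, contradicting $x\in\mathrm{int}\,\Omega$; hence $n(p)\cdot\zeta<0$ and $N(x,\zeta)>0$. Writing $\partial\Omega$ near $p$ as a regular level set $\{\Phi=0\}$ with $\Omega=\{\Phi<0\}$ and $\nabla\Phi=|\nabla\Phi|\,n$, the relation $\Phi(x-\tau_-(x,\zeta)\zeta)=0$ together with $\frac{d}{dt}\Phi(x-t\zeta)\big|_{t=\tau_-}=-|\nabla\Phi(p)|\,n(p)\cdot\zeta\neq0$ allows the implicit function theorem to produce a $C^1$ function $\tau_-(\cdot,\zeta)$ near $x$; then $p(\cdot,\zeta)=\mathrm{id}-\tau_-(\cdot,\zeta)\zeta$ is $C^1$ as well, and $x\mapsto N(x,\zeta)$ is continuous.

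With differentiability in hand, put $y=x+he_i$ for small $h$ and apply Proposition~\ref{Propdifference} to the pair $x,y$ (using \eqref{prop11} and \eqref{prop12} when $|x-p(x,\zeta)|\leq|y-p(y,\zeta)|$, and the symmetric statements otherwise): this gives $|p(y,\zeta)-p(x,\zeta)|\leq |h|/\min\{N(x,\zeta),N(y,\zeta)\}$ and $\big|\,|x-p(x,\zeta)|-|y-p(y,\zeta)|\,\big|\leq 2|h|/\min\{N(x,\zeta),N(y,\zeta)\}$. Since the last difference equals $(\tau_-(x,\zeta)-\tau_-(y,\zeta))|\zeta|$, dividing by $|h|$ and letting $h\to0$, together with the continuity of $N(\cdot,\zeta)$, yields precisely \eqref{ddd1} and \eqref{ddd2}.

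I expect the only genuine point to be the transversality $n(p(x,\zeta))\cdot\zeta\neq0$ at every interior point (hence the differentiability step); everything after that is bookkeeping. One could instead bypass the difference quotients by differentiating $\Phi(x-\tau_-\zeta)=0$ implicitly, obtaining $\partial_{x_i}\tau_-=n(p)_i/(n(p)\cdot\zeta)$ and $\partial_{x_i}p=e_i-\big(n(p)_i/(n(p)\cdot\zeta)\big)\zeta$ --- the latter being the oblique projection of $e_i$ onto $T_p(\partial\Omega)$ along $\zeta$, of norm $1/N(x,\zeta)$ --- which even gives constant $1$ in place of $2$ in \eqref{ddd2}; but the route through Proposition~\ref{Propdifference} is preferable since it reuses machinery already in place.
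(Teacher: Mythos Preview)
Your proposal is correct and follows essentially the same approach as the paper, which simply records that \eqref{ddd1} and \eqref{ddd3} are direct consequences of \eqref{prop11} and \eqref{prop13} in Proposition~\ref{Propdifference}, and derives \eqref{ddd2} from \eqref{ddd1} via the relation $p=x-\tau_-\zeta$ together with $N(x,\zeta)\leq 1$ (giving $|\partial_{x_i}\tau_-|\,|\zeta|\leq 1+|\partial_{x_i}p|\leq 2/N$). Your route to \eqref{ddd2} through \eqref{prop12} is equally valid, and your added discussion of transversality and differentiability via the implicit function theorem supplies a detail the paper leaves implicit.
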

We  notice that  
\begin{enumerate}
\item[(a)] \eqref{ddd1} is a direct result of \eqref{prop11} of  Proposition \ref{Propdifference}.
\item[(b)]  \eqref{ddd2} is derived from \eqref{defpp}, \eqref{ddd1} and  the fact that $N(x,\zeta) \le 1$.
\item[(c)] \eqref{ddd3} is a direct result of \eqref{defpp} and \eqref{prop13} of  Proposition \ref{Propdifference}.
\end{enumerate}
 We are now in a position   to prove \eqref{1infest}.

\begin{proof}[proof of \eqref{1infest}]

Let $e_i$ be the $i$-th unit vector in $\mathbb R^3$. Formal calculation gives
\begin{equation}\begin{split}
 \frac{\partial}{\partial x_i}I(x,\zeta)&=\left.\frac{d}{dt}\left( f(p(x+te_i,\zeta),\zeta)e^{-\nu(|\zeta|)\tau_-(x+te_i,\zeta)}\right)\right|_{t=0}\\ &=\left.\frac{d}{dt}\left( f(p(x+te_i,\zeta),\zeta)\right)\right|_{t=0}e^{-\nu(|\zeta|)\tau_-(x,\zeta)}\\&\quad\left.-  \frac{\partial}{\partial x_i} \tau_-(x,\zeta) \right.\nu(|\zeta|)f(p(x,\zeta),\zeta)e^{-\nu(|\zeta|)\tau_-(x,\zeta)}\end{split}
\end{equation}
Note that by \eqref{fboundary1} and \eqref{ddd1}, we have
\begin{equation}
\left| \left.\frac{d}{dt}\left( f(p(x+te_i,\zeta),\zeta)\right)\right|_{t=0} \right| \le \frac{M}{N(x,\zeta)} e^{-a|\zeta|^2}.
\end{equation}
Therefore, summing up the above two equations and applying Proposition~\ref{Dtau}, we obtain
\begin{align}
\nonumber \left| \frac{\partial}{\partial x_i}I(x,\zeta) \right|&\leq C \Big(\frac{1}{N(x,\zeta)}e^{-\nu(|\zeta|)\frac{d_x}{N(x,\zeta)|\zeta|}}\\
\label{1113}&\,\,+\nu(|\zeta|)\frac{1}{N(x,\zeta)|\zeta|}e^{-\nu(|\zeta|)\frac{d_x}{N(x,\zeta)|\zeta|}}\Big)e^{-a|\zeta|^2}\\
\label{1114}&\leq C d_x^{-1}(|\zeta|+1)e^{-a|\zeta|^2}\leq C d_x^{-1} e^{-\frac{a}{2}|\zeta|^2}.
\end{align}
\end{proof}

Taking the derivative on $II$ with respect to $x_i$, we have
\begin{equation}\begin{split}
\frac{\partial}{\partial x_i} II(x,\zeta)&=\int_0^{\tau_-(x,\zeta)}e^{-\nu(|\zeta|)s}\int_{\mathbb{R}^3} k(\zeta,\zeta')\frac{\partial}{\partial x_i}I(x-s\zeta ,\zeta')d\zeta'ds\\&\quad+e^{-\nu(|\zeta|)\tau_-(x,\zeta)}\int_{\mathbb{R}^3}k(\zeta,\zeta')f(p(x,\zeta),\zeta')d\zeta' \frac{\partial}{\partial x_i} \tau_-(x,\zeta)\\&=:  II^A+II^B.\end{split}
\end{equation}
By Proposition \ref{Dtau}, we see that
\begin{equation}
\label{IIB}
|II^B|\leq C\Vert f\Vert_{L^\infty_{x,\zeta}}e^{-\nu_0\frac{d_x}{N(x,\zeta)|\zeta|}}\frac1{N(x,\zeta)|\zeta|}\leq C\Vert f\Vert_{L^\infty_{x,\zeta}}d_x^{-1}.
\end{equation}
We let
\begin{equation}
\label{defofH}
H(x,\zeta)=\int_{\mathbb{R}^3} k(\zeta,\zeta') I(x,\zeta')d\zeta'.
\end{equation}
Then, we have 
\begin{equation}
II^A=\int_0^{\tau_-(x,\zeta)}e^{-\nu(|\zeta|)s}\frac{\partial}{\partial x_i} H(x-s\zeta ,\zeta')ds
\end{equation}
Concerning $\frac{\partial}{\partial x_i}H$, we have the following estimate.
\begin{prop}
\label{Hprop}
Let $H$ be as defined in \eqref{defofH}. We have
\begin{equation}\label{DvH13}
\left| \frac{\partial}{\partial x_i}H(x,\zeta) \right|\leq C d_x^{-\frac13} \left( \big|\ln d_x \big|+1 \right).
\end{equation}
\end{prop}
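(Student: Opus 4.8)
The plan is to mimic the structure of the estimate \eqref{kdIest} in Lemma~\ref{HolderIandII} but at the level of derivatives rather than differences, relying on the pointwise bounds \eqref{1infest} and \eqref{IwithN}-type control of $I$. The starting point is to differentiate $H(x,\zeta) = \int_{\mathbb R^3} k(\zeta,\zeta')I(x,\zeta')\,d\zeta'$ under the integral sign, so that $\frac{\partial}{\partial x_i}H(x,\zeta) = \int_{\mathbb R^3} k(\zeta,\zeta')\,\frac{\partial}{\partial x_i}I(x,\zeta')\,d\zeta'$. We then split the $\zeta'$-integral at the scale $|\zeta-\zeta'| = d_x^{1/3}$, writing $B_0 = \{|\zeta-\zeta'| < d_x^{1/3}\}$ and $B_0^c$ as its complement, exactly as in the proof of \eqref{kdIest}.

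On $B_0$ I would use the crude bound $\left|\frac{\partial}{\partial x_i}I(x,\zeta')\right| \le C d_x^{-1} e^{-\frac{a}{2}|\zeta'|^2}$ from \eqref{1infest}, together with $|k(\zeta,\zeta')|\le C|\zeta-\zeta'|^{-1}$ from \eqref{estimateK}; integrating in spherical coordinates centered at $\zeta$ over the ball of radius $d_x^{1/3}$ produces a factor $(d_x^{1/3})^2 = d_x^{2/3}$, giving a contribution $\le C d_x^{-1}\cdot d_x^{2/3} = C d_x^{-1/3}$. This is the term responsible for the $d_x^{-1/3}$ in the claimed bound. On $B_0^c$ I would instead need a bound on $\frac{\partial}{\partial x_i}I$ involving the geometric weight $N(x,\zeta')$ rather than $d_x$; such a bound is available from line \eqref{1113} of the proof of \eqref{1infest}, namely $\left|\frac{\partial}{\partial x_i}I(x,\zeta')\right| \le C\big(\frac{1}{N(x,\zeta')} + \frac{\nu(|\zeta'|)}{N(x,\zeta')|\zeta'|}\big)e^{-a|\zeta'|^2} \le \frac{C}{N(x,\zeta')}(1+|\zeta'|)e^{-a|\zeta'|^2}$ (discarding the favorable exponential $e^{-\nu_0 d_x/(N|\zeta'|)}$). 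Then, as in the treatment of $K\Delta I_2$, I change variables from $\zeta'$ to $(z,l)\in\partial\Omega\times\mathbb R_+$ via $\zeta' = l(x-z)$, whose Jacobian is $l^2|(x-z)\cdot n(z)|$ and which turns $N(x,\zeta')^{-1}$ into $|x-z|/|(x-z)\cdot n(z)|$; the weight $|(x-z)\cdot n(z)|$ in the Jacobian cancels the singular denominator, the $l$-integral against the Gaussian $e^{-al^2|x-z|^2}$ converges and yields a power of $|x-z|^{-1}$, and one is left with $\frac{C}{d_x^{1/3}}\int_{\partial\Omega}\frac{dA(z)}{|x-z|^2}$, which Lemma~\ref{lemmaA} bounds by $\frac{C}{d_x^{1/3}}(|\ln d_x|+1)$. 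Summing the two pieces gives \eqref{DvH13}.

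The main obstacle is bookkeeping the $\zeta'$-growth in the change of variables on $B_0^c$: after substituting $\zeta' = l(x-z)$ one has $|\zeta'| = l|x-z|$, so the polynomial factors $(1+|\zeta'|)$ coming from $\nu$ and from the $|\zeta'|^{-1}$ term become powers of $l|x-z|$, and one must check that the Gaussian $e^{-al^2|x-z|^2}$ (whose exponent also involves $|x-z|$) still produces, after integrating in $l$, only the single power $|x-z|^{-2}$ needed to invoke Lemma~\ref{lemmaA}, with all remaining $|x-z|$-powers nonnegative. A secondary point requiring care is the legitimacy of differentiating under the integral sign in $H$ and of discarding the exponential damping factor uniformly; both are harmless because $I(x,\zeta')$ is, for fixed $\zeta'$, the product of the boundary datum $f(p(x,\zeta'),\zeta')$ — differentiable in $x$ with the bound \eqref{fboundary1} — and the factor $e^{-\nu(|\zeta'|)\tau_-(x,\zeta')}$, whose $x$-derivative is controlled by Proposition~\ref{Dtau}, so the differentiated integrand is dominated by an integrable function locally uniformly in $x$.
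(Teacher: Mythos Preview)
Your proposal is correct and follows essentially the same route as the paper: split the $\zeta'$-integral at radius $d_x^{1/3}$, use \eqref{1infest} on the small ball to get $Cd_x^{-1/3}$, and on the complement use the $N(x,\zeta')^{-1}$-weighted bound from line \eqref{1113} together with the change of variables $\zeta'=l(x-z)$ and Lemma~\ref{lemmaA}. One small slip: the simplification $\frac{1}{N}+\frac{\nu(|\zeta'|)}{N|\zeta'|}\le \frac{C}{N}(1+|\zeta'|)$ is false for small $|\zeta'|$ since $\nu$ is bounded below; the correct bound is $\frac{C}{N}(1+|\zeta'|^{-1})e^{-a|\zeta'|^2}$, which you yourself invoke later when tracking the ``$|\zeta'|^{-1}$ term'' through the change of variables, and which the paper writes out explicitly.
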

\begin{proof}

In order to get a good estimate near the boundary, we shall break the domain of integration into two, $B(\zeta, d_x^{\frac13})$ and $\mathbb{R}^3\setminus B(\zeta,d_x^{\frac13})$, and name the corresponding integrals $DH_s$ and $DH_l$ respectively.
 For the estimate of $DH_s$, by applying the estimate \eqref{1infest}, we obtain
 \begin{equation}
 \label{dhs}
 \begin{split}
 |DH_s|&\leq \left| \int_{ B(\zeta,d_x^{\frac13})}  k(\zeta,\zeta')\frac{\partial}{\partial x_i}I(x - s \zeta,\zeta') d\zeta'  \right| \\
 &\leq C\frac{1}{d_x} \int_{ B(\zeta,d_x^{\frac13})} | k(\zeta,\zeta')|d\zeta'\\&\leq C\frac{1}{d_x} \int_{ B(\zeta,d_x^{\frac13})} \frac1{|\zeta-\zeta'|}d\zeta'\\&\leq C\frac{1}{d_x} \int_0^{\pi}\int_0^{d_x^{\frac13}}\frac1\rho\rho^2\sin\theta d\rho d\theta\\&\leq C{d_x^{-\frac13}}. \end{split}
 \end{equation}
 Regarding the estimate of $DH_l$,  we first notice that \eqref{1113} and \eqref{estimatenu}  imply 
 \begin{equation}\begin{split}
&\left| \frac{\partial }{ \partial x_i} I(x,\zeta') \right|\leq C \left(\frac{1}{N(x,\zeta')}+\frac{1}{N(x,\zeta')|\zeta|}\right)e^{-a|\zeta'|^2},\end{split}
\end{equation}
since $0 \le \gamma < 1.$
Next, we consider  the coordinate change $\zeta ' = l (y-x)$ as we employed in Section 1 together with the fact $|k(\zeta,\zeta')|<Cd_x^{-\frac13}$ in the domain  $\mathbb{R}^3\setminus B(\zeta,d_x^{\frac13})$ to obtain
 \begin{equation}
 \label{dhl}
 \begin{split}
 &|DH_l|\leq C d_x^{-\frac13}\int_0^\infty\int_{\partial\Omega} \left|\frac{\partial}{\partial x_i} I(x,\zeta') \right|l^2|n(y)\cdot(x-y)|dA(y)dl\\
 &\leq C d_x^{-\frac13}\int_0^\infty\int_{\partial\Omega}\left(\frac{1}{N(x,\zeta')}+\frac{1}{N(x,\zeta')l|x-y|}\right)\\
 &\quad\quad \quad \times e^{-al^2|x-y|^2}l^2|n(y)\cdot(x-y)|dA(y)dl\\
 &\leq C d_x^{-\frac13}\int_0^\infty\int_{\partial\Omega}\left(\frac{|x-y|}{|n(y)\cdot(x-y)|}+\frac{1}{l|n(y)\cdot(x-y)|}\right)\\
 &\quad\quad \quad \times e^{-al^2|x-y|^2}l^2|n(y)\cdot(x-y)|dA(y)dl\\ 
&\leq C d_x^{-\frac13}\int_0^\infty\int_{\partial\Omega}e^{-al^2|x-y|^2}(l^2|x-y|+l)dA(y)dl\\
&\leq Cd_x^{-\frac13}\int_{\partial\Omega}\int_0^\infty e^{-az^2}(z^2+z)dz\frac1{|x-y|^2}dA(y)dz\\
&\leq Cd_x^{-\frac13}\int_{\partial\Omega}\frac1{|x-y|^2}dA(y)\\
&\leq Cd_x^{-\frac13}(|\ln d_x|+1).
 \end{split}
 \end{equation}
Notice that  we let $z=|x-y|l$ in the third last line in the above derivation and applied  Lemma \ref{lemmaA} to draw our conclusion in the last line. 
Combining \eqref{dhs} and \eqref{dhl}, the proof of Proposition \ref{Hprop} is complete.
\end{proof}
 \begin{proof}[proof of \eqref{IIdiffest}  and \eqref{II1overzeta}. ]
 
 Now, we set  $l=|x-p(x,\zeta)|$. Applying Proposition~\ref{pointtobd} and Proposition~\ref{Hprop}, we obtain 
\begin{equation}\begin{split}
|II^A|&\leq C\int_0^{\tau_-(x,\zeta)}e^{-\nu(|\zeta|)s} d_{(x-s\zeta)}^{-(\frac13+\epsilon')} ds\\
&\leq C \left( \left( \frac{d_{x }}2 \right)^{-(\frac13+\epsilon')} \int_0^{\frac{d_x}{2|\zeta|}}e^{-\nu(|\zeta|)s}ds \right.\\
&\qquad + \left. \int_{\frac{d_x}2}^{l}e^{-\frac{\nu(|\zeta|) r}{|\zeta|}}d_x^{-(\frac13+\epsilon')}|l-r|^{-(\frac13+\epsilon')}\frac{1}{|\zeta|}dr \right).\end{split}\end{equation}
We observe that
\begin{equation*}
\begin{aligned}
\int_{\frac{d_x}2}^{l} & e^{-\frac{\nu(|\zeta|) r}{|\zeta|}}d_x^{-(\frac13+\epsilon')}|l-r|^{-(\frac13+\epsilon')}\frac{1}{|\zeta|}dr \\
&\le \left\{ \begin{aligned} & C \int_{\frac{d_x}2}^{l}d_x^{-(\frac13+\epsilon')}|l-r|^{-(\frac13+\epsilon')}\frac{1}{r}dr\\
& \int_{\frac{d_x}2}^{l}d_x^{-(\frac13+\epsilon')}|l-r|^{-(\frac13+\epsilon')}\frac{1}{|\zeta|}dr.\end{aligned}\right.\end{aligned} 
\end{equation*}
Hence, 
\begin{equation}
\label{IIA}
|II^A| \leq \left\{ \begin{aligned} & C(1+ d_x^{-1})^{\frac{4}{3}+\epsilon'}\\
& C(1+ d_x^{-1})^{\frac{1}{3}+\epsilon'}\frac{1}{|\zeta|}.
\end{aligned}\right.
\end{equation}
Finally, combining \eqref{IIB} and \eqref{IIA}, we obtain \eqref{IIdiffest}  and \eqref{II1overzeta}. This completes the proof of Lemma~\ref{DiffIandII}.
\end{proof}

Combining Lemma \ref{DiffIandII} and Proposition \ref{HolderIII}, we have a refined estimate\begin{equation}\label{fpointHolder}\left|f(x,\zeta)-f(y,\zeta)\right|\leq C\Vert  f\Vert_{L^\infty_{x,\zeta}}(1+d_x^{-1})^{\frac4{3}+\epsilon}|x-y|^{1-\epsilon}\end{equation}
in case $|x-y|<\frac{d_x}2$.

 We are now in a position to perform bootstrapping the regularity.

\begin{lem} Let $f\in L^\infty_{x,\zeta}$ be a  stationary solution to the linearized Boltzmann equation and $x$ be an interior point of $\Omega$. Suppose that there exist $0<\sigma<1$, \ $0<\delta<d(x,\partial\Omega)$,  and $ M>0$  such that, 
\begin{equation}
\label{holderf}
\sup_{\zeta\in\mathbb{R}^3}|f(x,\zeta)-f(y,\zeta)|\leq M|x-y|^\sigma,
\end{equation}
whenever $y\in B(x,\delta)$.

Then, $G$ is differentiable at $x$. Furthermore,\begin{equation}\label{diffGestimate}
\left| \frac{\partial}{\partial x_i}G(x,\zeta) \right| \leq C(\Vert f\Vert_{L^\infty_{x,\zeta}}(1+ |\ln \delta|) +M\delta^{\sigma}).
\end{equation}
\end{lem}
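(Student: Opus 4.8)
The plan is to combine a localisation around $x$ with the divergence-theorem bootstrap announced in the introduction. As a preliminary I would record that the H\"older bound \eqref{holderf} passes through the collision kernel: since $\sup_{\zeta'}\int_{\mathbb{R}^3}\big(|k(\zeta',\zeta_*)|+|\nabla_{\zeta_*}k(\zeta',\zeta_*)|\big)\,d\zeta_*<\infty$ by \eqref{estimateK}, \eqref{gradianK}, Proposition~\ref{cafdecay} and \eqref{CH}, one has $|K(f)(y,\zeta')-K(f)(x,\zeta')|\le CM|y-x|^{\sigma}$ and $|\nabla_{\zeta'}K(f)(y,\zeta')-\nabla_{\zeta'}K(f)(x,\zeta')|\le CM|y-x|^{\sigma}$ for all $\zeta'$ and all $y\in B(x,\delta)$, while $|K(f)|,|\nabla_{\zeta'}K(f)|\le C\|f\|_{L^\infty_{x,\zeta}}$ everywhere. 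The recurring computational device is the change of variables $\zeta'=\rho\frac{x-y}{|x-y|}$ used throughout the paper, which writes $dy\,d\rho$ in ``velocity coordinates'' and turns $\int_0^\infty\!\int_\Omega(\cdots)\frac{\rho^{m}}{|x-y|^{n}}\,dy\,d\rho$ into $\int_{\mathbb{R}^3}(\cdots)|\zeta'|^{m-2}\big(\int_0^{R}r^{2-n}\,dr\big)\,d\zeta'$ after using $e^{-\nu(\rho)|x-y|/\rho}\le1$; the borderline case $n=3$ produces the logarithm $\int_\delta^R r^{-1}dr\lesssim 1+|\ln\delta|$, while $n=3-\sigma$ gives the finite $\int_0^\delta r^{\sigma-1}dr\lesssim\delta^{\sigma}$.

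Step 1 (localise). Fix the ball $B_0=B(x,\delta)\subset\Omega$ and split $G=G_{\mathrm{far}}+G_{\mathrm{near}}$ according to $\Omega=(\Omega\setminus B_0)\cup B_0$; I estimate $\partial_{x_i}$ of each piece at $x$. On $\Omega\setminus B_0$ there is no singularity, so the derivative may be taken under the integral sign, and differentiating the four $x$-dependent factors of the integrand ($k(\zeta,\rho\frac{x-y}{|x-y|})$, the exponential, the weight $|x-y|^{-2}$, and $K(f)(y,\rho\frac{x-y}{|x-y|})$) produces at worst a $|x-y|^{-3}$ weight times $|k(\zeta,\rho\frac{x-y}{|x-y|})|\rho$, $|k(\zeta,\rho\frac{x-y}{|x-y|})|\rho^2$ or $|\nabla_{\zeta'}k(\zeta,\rho\frac{x-y}{|x-y|})|\rho^2$ (for the exponential term one uses $e^{-\nu(\rho)|x-y|/\rho}\nu(\rho)/\rho\le C|x-y|^{-1}$ as in Section~\ref{SDiffDf}). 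The velocity-coordinate identity with $n=3$ then bounds $|\partial_{x_i}G_{\mathrm{far}}(x)|$ by $C\|f\|_{L^\infty_{x,\zeta}}(1+|\ln\delta|)\int_{\mathbb{R}^3}\big(\frac{|k(\zeta,\zeta')|}{|\zeta'|}+|k(\zeta,\zeta')|+|\nabla_{\zeta'}k(\zeta,\zeta')|\big)\,d\zeta'\le C\|f\|_{L^\infty_{x,\zeta}}(1+|\ln\delta|)$, the $\zeta'$-integrals being finite uniformly in $\zeta$ by \eqref{estimateK}, Proposition~\ref{cafdecay} and \eqref{CH}.

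Step 2 (bootstrap on $B_0$). On $B_0$ I subtract and add the value at the centre, $K(f)(y,\rho\frac{x'-y}{|x'-y|})=\big[K(f)(y,\rho\frac{x'-y}{|x'-y|})-K(f)(x,\rho\frac{x'-y}{|x'-y|})\big]+K(f)(x,\rho\frac{x'-y}{|x'-y|})$ ($x$ fixed, $x'$ the running point), splitting $G_{\mathrm{near}}=G^{(1)}+G^{(2)}$. In $G^{(1)}$ every $x'$-derivative now carries the H\"older gain $CM|x-y|^{\sigma}$ (the derivative hitting the bracket through its velocity slot uses the H\"older continuity of $\nabla_{\zeta'}K(f)$ recorded above), so the $|x-y|^{-3}$ weight is upgraded to the integrable $|x-y|^{\sigma-3}$; the velocity-coordinate identity with $n=3-\sigma$ gives $|\partial_{x_i}G^{(1)}(x)|\le CM\delta^{\sigma}$, and existence of this derivative follows from the difference quotient by excising the ball $B(x,2|h|)$ --- whose contribution divided by $|h|$ is $O(|h|^{\sigma})\to0$, while on its complement the mean value theorem together with dominated convergence apply. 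The remaining piece $G^{(2)}$ still carries the non-integrable $|x-y|^{-3}$ weight and cannot be differentiated under the integral, but its integrand depends on $x'$ only through $x'-y$, so after the substitution $u=x'-y$ we have $G^{(2)}(x')=\int_0^\infty\!\int_{B(x'-x,\delta)}\mathcal{A}(u,\rho)\,du\,d\rho$ with $\mathcal{A}(u,\rho)=k(\zeta,\rho\frac{u}{|u|})e^{-\nu(\rho)|u|/\rho}|u|^{-2}\rho\,K(f)(x,\rho\frac{u}{|u|})$, a translating family of balls whose boundary stays at distance $\ge\delta/2$ from the sole singularity $u=0$ (for $|x'-x|<\delta/2$). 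Differentiating the translated domain and letting the increment tend to zero identifies $\partial_{x_i}G^{(2)}(x)$ with the surface integral $\int_0^\infty\!\int_{\partial B(0,\delta)}\mathcal{A}(u,\rho)\frac{u_i}{|u|}\,dA(u)\,d\rho$; since $|u|=\delta$ on that sphere, the $\delta^{-2}$ in $\mathcal{A}$ cancels the $\delta^{2}$ in the area element and this is $\le C\|f\|_{L^\infty_{x,\zeta}}\int_{\mathbb{R}^3}\frac{|k(\zeta,\zeta')|}{|\zeta'|}\,d\zeta'\le C\|f\|_{L^\infty_{x,\zeta}}$. Combining Steps 1 and 2 yields \eqref{diffGestimate}.

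The step I expect to be the main obstacle is the rigorous differentiation of $G^{(2)}$: one must show that the genuine $x$-derivative of an integral whose formal differentiation is not even well defined equals the asserted surface integral over $\partial B(0,\delta)$. Concretely this is done by writing $G^{(2)}(x+he_i)-G^{(2)}(x)$ as the difference of the integrals over the translated balls $B(he_i,\delta)$ and $B(0,\delta)$, isolating the thin ``lune'' region between them --- which, crucially, never approaches the singularity $u=0$ --- and passing to the limit $h\to0$. This is the precise form of the ``divergence theorem'' bootstrap and of the ``very narrow window'' alluded to in the introduction: the scheme closes only because the $L^1$ bounds \eqref{estimateK}, \eqref{gradianK} and \eqref{CH} on $k$ and $\nabla_{\zeta'}k$ are exactly strong enough to make the logarithmic loss in $G_{\mathrm{far}}$, the gain $\delta^{\sigma}$ in $G^{(1)}$, and the scaling cancellation in $G^{(2)}$ all simultaneously admissible.
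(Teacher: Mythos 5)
Your proposal is correct and follows the paper's overall strategy: split $\Omega$ at the fixed ball $B(x,\delta)$, treat the far part by differentiating under the integral sign (logarithmic loss), and on the near part subtract and add the centre value (your $K(f)(x,\cdot)$, which is exactly the paper's $f(x,\eta)$ once the double kernel is unrolled), using H\"older continuity for one piece and a divergence-type argument for the other. The one genuine variation is in the singular piece $G^{(2)}$: the paper differentiates formally, excises $B(x,\epsilon)$, applies the divergence theorem on the annulus, and sends $\epsilon\to0$, obtaining a two-surface formula (the $\delta$-sphere term plus a residual $\epsilon\to0$ limit term); you instead substitute $u=x'-y$ to rewrite $G^{(2)}(x')$ as an integral of a fixed singular integrand over the translating ball $B(x'-x,\delta)$, and the transport (moving-domain) theorem applies because the translating boundary stays a distance $\ge\delta/2$ from the singularity. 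Your formulation directly establishes the actual derivative of $G^{(2)}$ and yields a single surface term over $\partial B(0,\delta)$, which is the correct value of $\partial_{x_i}G^{(2)}(x)$; the paper's principal value $g_{s2}$ differs from it by the bounded $\epsilon\to0$ residue, but since both expressions are $O(\|f\|_{L^\infty_{x,\zeta}})$ the final estimate \eqref{diffGestimate} is unaffected. In this sense your treatment of the singular piece is a slightly more careful justification of differentiability than the formal computation in the paper, while the decomposition, the use of \eqref{estimateK}, \eqref{gradianK}, \eqref{CH} and the H\"older continuity of $\nabla_{\zeta'}K(f)$, and the resulting bounds $C\|f\|_{L^\infty_{x,\zeta}}(1+|\ln\delta|)$, $CM\delta^\sigma$, $C\|f\|_{L^\infty_{x,\zeta}}$ for the three pieces match the paper's exactly.
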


\begin{proof}

Recall that
\begin{equation}\label{Gformula22}
\begin{split}
G(x,\zeta)=\int_0^\infty\int_{\Omega}\int_{\mathbb{R}^3}& k \left( \zeta,\rho\frac{(x-y)}{|x-y|} \right) e^{-\nu(\rho)\frac{|x-y|}{\rho}} \\
& \times k \left( \rho\frac{(x-y)}{|x-y|},\eta \right)f(y,\eta)\frac{\rho}{|x-y|^2}d\eta dyd\rho.
\end{split}
\end{equation}
 We first formally differentiate the above formula with respect to  $x_i$ and divide the domain of integration into two parts : $ B(x,\delta)$ and $\Omega \setminus B(x,\delta) $. We denote the corresponding integrals as $g_s$ and $g_l$. 
 Regarding the estimate of $g_l$, the typical term is
 \begin{equation}
\begin{split}
|g_{l1}| := &\left| -2 \int_0^\infty\int_{\Omega \setminus B(x,\delta) }\int_{\mathbb{R}^3} k \left( \zeta,\rho\frac{(x-y)}{|x-y|} \right)e^{-\nu(\rho)\frac{|x-y|}{\rho}} \right.\\
& \left. \times k \left( \rho\frac{(x-y)}{|x-y|},\eta \right)f(y,\eta)\frac{\rho}{|x-y|^3} \frac{x_i-y_i}{|x-y|}d\eta dyd\rho \right|\\
&\le C\Vert f\Vert_{L^\infty_{x,\zeta}} \int_0^{\infty} \int_{\Omega \setminus B(x,\delta)}  k \left(\zeta, \rho\frac{(x-y)}{|x-y|} \right) \frac{\rho}{|x-y|^3} dy d \rho \\
&\le C\Vert f\Vert_{L^\infty_{x,\zeta}} \int_0^{\infty} \int_{S^2}\int_{\delta}^R  k(\zeta,\rho\omega) \frac{\rho}{r^3}r^2 dr d \omega d \rho d\omega\\
& \le  C\Vert f\Vert_{L^\infty_{x,\zeta}}  (1 + |\ln \delta|),
\end{split}
\end{equation}  
where $R$ is the diameter of $\Omega$. By using \eqref{CH}, in the same fashion, we readily see that
\begin{equation}
\label{gl}
| g_l | \le C\Vert f\Vert_{L^\infty_{x,\zeta}}  (1 + |\ln \delta|).
\end{equation}

 Regarding the estimate of $g_{s}$, in order to utilize the H\"{o}lder continuity, we  subtract and add  $f(x,\eta)$ in the integrand as follows:
 
\begin{equation}\label{gformula1}
\begin{split}
&g_{s}(x,\zeta)=\int_0^\infty\int_{B(x,\delta)}\int_{\mathbb{R}^3 }\\&\quad \left(f(y,\eta)-f(x,\eta)\right) \frac{\partial}{\partial x_i}\left[ k \left(\zeta,\rho\frac{x-y}{|x-y|} \right)e^{-\nu(\rho)\frac{|x-y|}{\rho}} k \left(\rho\frac{x-y}{|x-y|},\eta \right)\frac{\rho}{|x-y|^2}\right]d\eta dyd\rho\\&+ \int_0^\infty\int_{B(x,\delta)}\int_{\mathbb{R}^3} f(x,\eta) \frac{\partial}{\partial x_i}\left[ k \left(\zeta,\rho\frac{x-y}{|x-y|} \right)e^{-\nu(\rho)\frac{|x-y|}{\rho}} k \left(\rho\frac{x-y}{|x-y|},\eta \right)\frac{\rho}{|x-y|^2}\right]d\eta dyd\rho\\ &=: g_{s1} +g_{s2}.
\end{split}
\end{equation}
For $g_{s1}$, the H\"{o}lder continuity of $f$ in space variables, see \eqref{holderf}, makes the integrand  integrable. We have
\begin{equation}
\label{gs1}
|g_{s1}|\leq C M\delta^\sigma.
\end{equation} 

For $g_{s2}$, we first remove an $\epsilon$-ball and integrate:\begin{equation}
\begin{split}
&g_{s2}^{\epsilon}:=\int_0^\infty\int_{B(x,\delta)\setminus B(x,\epsilon)}\int_{\mathbb{R}^3 } f(x,\eta)\\
&\quad \frac{\partial}{\partial x_i}\left[ k \left(\zeta,\rho\frac{x-y}{|x-y|} \right) e^{-\nu(\rho)\frac{|x-y|}{\rho}} k \left(\rho\frac{x-y}{|x-y|},\eta \right)\frac{\rho}{|x-y|^2}\right]d\eta dydd\rho\\
 &=\int_0^\infty\int_{B(x,\delta)\setminus B(x,\epsilon)}\int_{\mathbb{R}^3 } f(x,\eta)\\&\quad \left(-\left(\frac{\partial}{\partial y_i}\left[ k\left(\zeta,\rho\frac{x-y}{|x-y|} \right)e^{-\nu(\rho)\frac{|x-y|}{\rho}} k \left(\rho\frac{x-y}{|x-y|},\eta \right)\frac{\rho}{|x-y|^2}\right]\right)d\eta dydd\rho \right.\\
 &=\int_0^\infty\int_{\mathbb{R}^3 }\left(-\int_{S^2(x,\delta)}+\int_{S^2(x,\epsilon)}\right) f(x,\eta)\\
 &\quad \left[ k \left(\zeta,\rho\frac{x-y}{|x-y|} \right)e^{-\nu(\rho)\frac{|x-y|}{\rho}} k \left(\rho\frac{x-y}{|x-y|},\eta \right)\frac{\rho}{|x-y|^2}\right] n_i(y)dA(y)d\eta dd\rho\\&=\int_{\mathbb{R}^3}\int_{\mathbb{R}^3} f(x,\eta)k(\zeta,\zeta')\left(- e^{-\frac{\nu(|\zeta'|)}{|\zeta'|}\delta}+ e^{-\frac{\nu(|\zeta'|)}{|\zeta'|}\epsilon}\right)k(\zeta',\eta)\frac{1}{|\zeta'|}\frac{\zeta'_i}{|\zeta'|}d\zeta'd\eta.\end{split}
\end{equation}

Notice that the integrand in the last integral above is bounded by
\begin{equation}2\Vert f\Vert_{L^\infty_{x,\zeta}}
k(\zeta,\zeta')k(\zeta',\eta)|\zeta'|^{-1},
\end{equation}
which is integrable in $(\zeta',\eta)$. Therefore, we can pass the limit $\epsilon\to0$ and, furthermore

\begin{equation}
\label{gs2}
|g_{s2}| \leq C \Vert f\Vert_{L^\infty_{x,\zeta}}.
\end{equation}
Inferring from \eqref{gl}, \eqref{gs1} and \eqref{gs2}, we obtain \eqref{diffGestimate}. 
\end{proof} 

 Finally, to complete the proof of \eqref{diffxv}, we now estimate $\frac{\partial}{\partial x_i} III (x, \zeta)$.
{\sc Proof of \eqref{diffxv}.}
\begin{proof}
Differentiating $III$ directly and applying  \eqref{ddd2}, \eqref{ddd3}, \eqref{fpointHolder} and \eqref{diffGestimate}, we obtain

\begin{equation}\begin{split}
\left| {\frac{\partial}{\partial x_i}}III(x,\zeta) \right|&\le \left|\int_0^{\tau_-(x,\zeta)}e^{-\nu(|\zeta|)s}{\frac{\partial}{\partial x_i}} G(x-s \zeta,\zeta)ds\right|\\
& + \Big|  \frac{\partial}{\partial x_i} \tau_-(x,\zeta) e^{-\nu(|\zeta|)\tau_-(x, \zeta)} G( p(x,\zeta) ,\zeta)   \Big|\\
&\leq C\Vert f\Vert_{L^\infty_{x,\zeta}}\left|\int_0^{\tau_-(x,\zeta)}e^{-\nu(|\zeta|)s}(1+d_{x-s\zeta}^{-\frac13-2\epsilon})ds \right|\\
& + C\Vert f\Vert_{L^\infty_{x,\zeta}} d_x^{-1}.
\end{split}\end{equation}
Letting $l=|x-p(x,\zeta)|$ and  applying Proposition~\ref{pointtobd}, we readily obtain
\begin{equation}\begin{split}
&\left|\int_0^{\tau_-(x,\zeta)}e^{-\nu(|\zeta|)s}(1+d_{x-s\zeta}^{-\frac13-2\epsilon})ds \right|\\
&\leq C \left( 1+d_x^{-\frac13-2\epsilon} \right)\int_0^{\frac{d_x}{2|\zeta|}}e^{-\nu(|\zeta|)s}ds+\int_{\frac{d_x}2}^{l}e^{-\frac{\nu(|\zeta|)}{|\zeta|}r} \left( 1+d_{x-r\hat{\zeta}}^{-\frac13-2\epsilon} \right)\frac1{|\zeta|}dr \\
&\leq C \left(1+d_x^{-\frac13-2\epsilon} \right)+ Cd_x^{-1}\int_{\frac{d_x}2}^{l} \left( 1+d_{x}^{-\frac13-2\epsilon}|l-r|^{-\frac13-2\epsilon} \right)\,dr\\&\leq C \left(1+d_x \right)^{-\frac43-2\epsilon}.\end{split}\end{equation}
Therefore, we see that
\begin{equation}
\label{iiix}
\left| {\frac{\partial}{\partial x_i}}III(x,\zeta) \right| \le C\Vert f\Vert_{L^\infty_{x,\zeta}} (1+d_x)^{-\frac43-2\epsilon}.
\end{equation}
Taking \eqref{1infest}, \eqref{IIdiffest} and \eqref{iiix} into account, by \eqref{def123}, the proof of \eqref{diffxv} is complete.\end{proof}

 Notice that 
 \begin{equation*}
  \int_{\frac{d_x}2}^{l}e^{-\frac{\nu(|\zeta|)}{|\zeta|}r} \left(1+d_{x-r\hat{\zeta}}^{-\frac13-2\epsilon} \right)\frac1{|\zeta|}dr \le
\left\{  \begin{aligned}
  & C(1+d_x)^{-\frac13-2\epsilon}\frac{1}{|\zeta|} \\
 & Cd_x^{-1}\int_{\frac{d_x}2}^{l} \left(1+d_{x}^{-\frac13-2\epsilon}|l-r|^{-\frac13-2\epsilon} \right)\,dr.
  \end{aligned}
 \right.
 \end{equation*}
 
 Hence, if we allow the singularity at $\zeta=0$, we can estimate
   \begin{equation}\begin{split}\label{dviiising}
\left|{\frac{\partial}{\partial x_i}}III(x,\zeta) \right|&\leq C(1+d_x)^{-\frac13-2\epsilon} \left(1+\frac{1}{|\zeta|} \right).\end{split}\end{equation} 


\section{Derivative with respect to  velocity \label{zetaDiff}}

In this section, we shall discuss the derivative of the velocity distribution function with respect to microscopic velocity $\zeta$, i.e., \eqref{diffzeta}. Differentiating the integral equation (\ref{inteq}) directly, we have
\begin{equation}\begin{split}&\frac{\partial}{\partial \zeta_i} f(x,\zeta)=e^{-\nu(|\zeta|)\tau_-(x,\zeta)} \left[\nabla^x_{\frac{\partial}{\partial \zeta_i}p(x,\zeta)}f(p(x,\zeta),\zeta)+\frac{\partial}{\partial \zeta_i} f(p(x,\zeta),\zeta)\right]\\&+f(p(x,\zeta),\zeta)e^{-\nu(|\zeta|)\tau_-(x,\zeta)}
\left[-\nu'(|\zeta|)\frac{\zeta_i}{|\zeta|} \tau_-(x,\zeta)-\nu(|\zeta|)\frac{\partial \tau_-(x,\zeta)}{\partial \zeta_i}\right]\\&+ \frac{\partial\tau_-(x,\zeta)}{\partial \zeta_i}e^{-\nu(|\zeta|)\tau_-(x,\zeta)}K(f)(p(x,\zeta),\zeta)\\&+\int_0^{\tau_-(x,\zeta)}-\nu'(|\zeta|)\frac{\zeta_i}{|\zeta|}e^{-\nu(|\zeta|)s}K(f)(x-s\zeta,\zeta)ds\\&-\int_0^{\tau_-(x,\zeta)}e^{-\nu(|\zeta|)s}\frac{\partial}{\partial x_i}K(f)(x-s\zeta,\zeta)sds\\&+\int_0^{\tau_-(x,\zeta)}e^{-\nu(|\zeta|)s}\frac{\partial}{\partial \zeta_i}K(f)(x-s\zeta,\zeta)ds\\&=:D_v^1+D_v^2+D_v^3+D_v^4+D_v^5+D_v^6.
\end{split}\end{equation}
From the fact $\nu'(|\zeta|)$ is bounded, we can conclude that $D_v^4$ is bounded. 
Using the estimate \begin{equation}
\left\Vert \frac{\partial}{\partial\zeta_i}K(f) \right\Vert_{L^\infty_\zeta}\leq C\Vert f\Vert_{L^\infty_\zeta}, \end{equation}
we can prove that $D_v^6$ is bounded.
Using \eqref{DvH13}, \eqref{II1overzeta}, and \eqref{dviiising}, we  have
\begin{equation}
\left| \frac{\partial}{\partial x_i}K(f)(x-s\zeta,\zeta) \right|\leq C \left(1+d_{x-s\zeta}^{-1} \right)^{\frac13+\epsilon}.
\end{equation} 
Then, applying Proposition\ref{pointtobd}, we have
\begin{equation}\begin{split}
|D_v^5|&\leq C\int_0^{\tau_-(x,\zeta)}e^{-\nu(|\zeta|)s}\big(1+[d_x|\zeta|(\tau_-(x,\zeta)-s)]^{-\frac13-\epsilon}\big)sds\\&\leq C\int_0^{l}e^{-\frac{\nu(|\zeta|)}{|\zeta|}r}\big(1+[d_x|(l-r))]^{-\frac13-\epsilon}\big)\frac{r}{|\zeta|^2}dr\\&\leq C \left(1+d_x^{-1} \right)^{\frac13+\epsilon} \left(1+\frac1{|\zeta|} \right) .
\end{split}\end{equation}
If we do not want the singularity in the expression, we can have
\begin{equation}\begin{split}
|D_v^5|&\leq C(1+d_x^{-1})^{\frac43+\epsilon}.
\end{split}\end{equation}

To deal with the rest of terms, we need to discuss the derivative of $\tau_-$ and $p$ with respect to $\zeta$.

\begin{prop}
Suppose that $\Omega$ is a $C^1$ bounded convex domain in $\mathbb{R}^3$, $ x\in \Omega$, and $\zeta$, $\zeta'\in \mathbb{R}^3$. Then,
\begin{align}
\left| \frac{\partial}{\partial \zeta_i} \tau_-(x,\zeta) \right|&\leq \frac{\tau_-(x,\zeta)}{N(x,\zeta)|\zeta|}\\
\left| \frac{\partial}{\partial \zeta_i} p(x,\zeta) \right|&\leq \tau_-(x,\zeta) \left( 1+\frac1{N(x,\zeta)|\zeta|} \right).
\end{align}
\end{prop}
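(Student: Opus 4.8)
The plan is to obtain both inequalities by implicit differentiation of the membership relation $p(x,\zeta)\in\partial\Omega$, in the same spirit in which Proposition~\ref{Dtau} was deduced from Proposition~\ref{Propdifference}, but now differentiating in $\zeta$ rather than in $x$. Since both inequalities are vacuous when $N(x,\zeta)=0$, I would first reduce to the non-grazing case: the ray $t\mapsto x-t\zeta$ meets $\partial\Omega$ transversally at $p:=p(x,\zeta)$, i.e. $n(p)\cdot\zeta\neq0$. Near such a $p$ the boundary is a $C^{1}$ level set $\Phi^{-1}(0)$ with $\nabla\Phi$ a positive multiple of the outward unit normal $n$, and since $\partial_{\tau}\big[\Phi(x-\tau\zeta)\big]=-\nabla\Phi(x-\tau\zeta)\cdot\zeta$ equals $-\nabla\Phi(p)\cdot\zeta\neq0$ at $\tau=\tau_-$, the implicit function theorem produces a $C^{1}$ solution branch of $\Phi(x-\tau\zeta)=0$; by convexity it coincides with $\tau_-(x,\cdot)$. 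Hence $\tau_-(x,\cdot)$, and therefore $p(x,\cdot)=x-\tau_-(x,\cdot)\,\zeta$, is differentiable at every non-grazing $\zeta$.

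Next I would differentiate $\Phi\big(x-\tau_-(x,\zeta)\zeta\big)=0$ with respect to $\zeta_i$, obtaining
\[
\nabla\Phi(p)\cdot\Big(-\frac{\partial\tau_-}{\partial\zeta_i}\,\zeta-\tau_-\,e_i\Big)=0,
\qquad\text{hence}\qquad
\frac{\partial\tau_-}{\partial\zeta_i}(x,\zeta)=-\,\tau_-(x,\zeta)\,\frac{n(p)\cdot e_i}{\,n(p)\cdot\zeta\,},
\]
where $e_i$ is the $i$-th coordinate vector. Using $|n(p)\cdot e_i|\le|n(p)|=1$ together with $|n(p)\cdot\zeta|=N(x,\zeta)\,|\zeta|$ (the definition of $N$) gives the first estimate immediately. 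For the second, I would use $p=x-\tau_-\zeta$, so that $\frac{\partial p}{\partial\zeta_i}=-\big(\frac{\partial\tau_-}{\partial\zeta_i}\big)\zeta-\tau_-\,e_i$, and estimate the two contributions separately—the first by $|\partial_{\zeta_i}\tau_-|$ times $|\zeta|$ and the second by $\tau_-$—inserting the bound just obtained for $|\partial_{\zeta_i}\tau_-|$; collecting the terms yields the claimed control of $|\partial_{\zeta_i}p|$.

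An equivalent route that stays closer to the geometric method of Section~\ref{HolderRV} is worth recording: $p(x,\zeta)$ depends on $\zeta$ only through $\widehat\zeta=\zeta/|\zeta|$, so one may first prove, by plane geometry on the plane through $x$, $p(x,\zeta)$ and $p(x,\zeta')$—just as in the proof of Proposition~\ref{Propdifference}—a Lipschitz estimate for $\widehat\zeta\mapsto p(x,\widehat\zeta)$ with constant governed by $N(x,\zeta)^{-1}$, and then compose with $|\partial_{\zeta_i}\widehat\zeta|\le|\zeta|^{-1}$; this composition is exactly where the factor $|\zeta|$ enters the denominators. The only genuinely delicate point in either approach is the differentiability assertion itself when $\partial\Omega$ is merely $C^{1}$; once transversality at $p$ has been invoked this is routine, and everything after it is a one-line computation. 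Grazing directions, where $N(x,\zeta)=0$, require no separate argument since the stated inequalities hold trivially there.
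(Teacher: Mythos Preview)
Your approach is essentially identical to the paper's: the paper does not give a self-contained argument but simply cites the explicit formula in Lemma~2 of \cite{GuoContinuity}, which is exactly the implicit-differentiation formula
\[
\frac{\partial\tau_-}{\partial\zeta_i}=-\,\tau_-\,\frac{n_i(p)}{n(p)\cdot\zeta},\qquad
\frac{\partial p}{\partial\zeta_i}=\tau_-\Big(\frac{n_i(p)}{n(p)\cdot\zeta}\,\zeta-e_i\Big)
\]
that you derive. So the route is the same.

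One point deserves care. When you ``collect the terms'' for the second inequality, the triangle inequality actually gives
\[
\Big|\frac{\partial p}{\partial\zeta_i}\Big|\le \tau_-\Big(\frac{|\zeta|}{N(x,\zeta)|\zeta|}+1\Big)=\tau_-\Big(1+\frac{1}{N(x,\zeta)}\Big),
\]
not the stated $\tau_-\big(1+\frac{1}{N(x,\zeta)|\zeta|}\big)$. These two bounds are not comparable, and in fact the form written in the proposition can fail: for the unit ball with $x=(0,0,0.99)$ and $\zeta=(100,0,0.01)$ one computes $|\partial_{\zeta_3}p|\approx 10^{-2}$ while $\tau_-\big(1+\frac{1}{N|\zeta|}\big)\approx 1.5\times 10^{-3}$. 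So the displayed second inequality is presumably a typo, and the bound you obtain, $\tau_-(1+N^{-1})$, is the correct one coming from Guo's formula. This does not affect the application in Section~\ref{zetaDiff}: multiplying by $e^{-\nu(|\zeta|)\tau_-}$ and using $\tau_-/N=|x-p|/(N|\zeta|)\le R/(N|\zeta|)$ together with $e^{-\nu_0 d_x/(N|\zeta|)}/(N|\zeta|)\le C d_x^{-1}$ still yields the required $C d_x^{-1}$ control.
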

The above proposition is a direct consequence from the explicit formula in   Lemma 2 in \cite{GuoContinuity}.

Let \begin{equation}
\eta=\frac{\frac{\partial}{\partial \zeta_i}p(x,\zeta)}{\left|\frac{\partial}{\partial \zeta_i}p(x,\zeta)\right|}.\end{equation}
Then,
\begin{equation}
\nabla^x_{{\frac{\partial}{\partial \zeta_i}p(x,\zeta)}}f(p(x,\zeta),\zeta)=\nabla^x_\eta f(p(x,\zeta),\zeta)|{\frac{\partial}{\partial \zeta_i}p(x,\zeta)}|\leq C|{\frac{\partial}{\partial \zeta_i}p(x,\zeta)}|.
\end{equation}
Notice that \begin{equation}
   e^{-\nu(|\zeta|)\tau_-(x,\zeta)}\frac{\tau_-(x,\zeta)}{N(x,\zeta)|\zeta|}\leq Cd_x^{-1}.\end{equation}
We conclude the lemma.

\end{document}